\documentclass[oneside,reqno]{amsart}

\def\BibTeX{{\rm B\kern-.05em{\sc i\kern-.025em b}\kern-.08em
    T\kern-.1667em\lower.7ex\hbox{E}\kern-.125emX}}

    \hfuzz1pc 

\usepackage{amssymb}


\usepackage{epsfig}  		



\usepackage{hyperref}
\usepackage{breakurl}


\newtheorem{theorem}{Theorem}[section]
\newtheorem{lemma}[theorem]{Lemma}
\newtheorem{proposition}[theorem]{Proposition}
\newtheorem{corollary}[theorem]{Corollary}

\theoremstyle{remark}
\theoremstyle{definition}

\newtheorem{definition}{Definition}[section]

\def\subsection{\@startsection{subsection}{2}%
  \z@{.5\linespacing\@plus.7\linespacing}{-.5em}%
  {\normalfont\bfseries}}

\newcommand{\mres}{\mathbin{\vrule height 1.1ex depth 0pt width
0.13ex\vrule height 0.13ex depth 0pt width 1.1ex}}

\makeatletter
\renewcommand\subsection{\@startsection{subsection}{2}%
  \z@{-.5\linespacing\@plus-.7\linespacing}{.5\linespacing}%
  {\normalfont\scshape}}
\renewcommand\subsubsection{\@startsection{subsubsection}{3}%
  \z@{.5\linespacing\@plus.7\linespacing}{-.5em}%
  {\normalfont\scshape}}
\makeatother















\DeclareSymbolFont{yhlargesymbols}{OMX}{yhex}{m}{n}
\DeclareMathAccent{\wideparen}{\mathord}{yhlargesymbols}{"F3}


\begin{document}


\title[The first \textit{\small \lowercase{p}}-widths of the unit disk]{The first {\normalfont \textit{\Large \lowercase{p}}}-widths of the unit disk}


\author{Sidney Donato}
\address{Instituto de Matem\'atica, Universidade Federal de Alagoas, 
Macei\'o, AL, Brazil.}
\email{sidney.silva@im.ufal.br}
\thanks{Research supported by the Coordena\c{c}\~{a}o de Aperfei\c{c}oamento de Pessoal de N\'{i}vel Superior (CAPES) - Brazil (CAPES) - Finance Code 001.}
\thanks{\textit{Mathematics Subject Classication} 2020. Primary: 53C23; Secondary: 58E05.}





\begin{abstract}
On compact 2-manifolds with non-empty strictly convex \-{boun}\-{da}\-{ry}, we prove a regularity result for integral 1-varifolds $V$ that are \-{sta}\-{tio}\-{na}\-{ry} with free boundary and $\mathbb{Z}_2$-almost minimizing in small annuli. That regularity says that $V$ is a free boundary finite geodesic network. Next, using that regularity, we compute the first $p$-widths of the unit closed ball $B^2,$ for $p=1, . . . , 4.$\end{abstract}

 \maketitle





\section{Introduction}
\label{intro}

For $n>0,$ let $(M^{n+1}, g)$ be a compact Riemannian manifold with (possibly empty) \-{boun}\-{da}\-{ry.} Gromov  \cite{Gromov-homotopy,Gromov-width}, Guth \cite{Guth}, Marques and Neves \cite{Marques-Neves} introduced the notion of volume spectrum $\{\omega_p(M)\}_{p=1}^{\infty}$ for the area functional in the space of relative mod 2 cycles $\mathcal{Z}_n(M, \partial M, \mathbb{Z}_2).$ The volume spectrum is a sequence of positive numbers that satisfies similar properties to the spectrum of the Laplacian: 
$$0<\omega_p(M)\leq \omega_{p+1} (M) \quad  \mbox{and} \quad  \omega_p(M) \rightarrow \infty \ \mbox{as} \ p \rightarrow \infty.$$
Where $\omega_p(M)$ is called the $p^{th}$ \textit{min-max width} of $M.$

The Liokumovich-Marques-Neves-Weyl \cite{Marques-Neves-Liokumovich} law gives the asymptotic behavior of this spectrum, precisely:
$$\lim_{p\rightarrow \infty} \omega_p (M) p^{-\frac{1}{n+1}}=a(n) \mbox{vol}(M)^{\frac{n}{n+1}},$$
In contrast with the Weyl law for eigenvalues of the Laplacian, the constant above is almost totally unknown. Obviously, if we have a full description of the values of the volume spectrum, we can deduce the constant $a(n).$ So far this full description seems to be very hard. In fact, the results on this direction got only to compute some initial widths (Aiex \cite{Aiex}, Gaspar and Guaraco \cite{Gaspar-Guaraco}, and Nurser \cite{Nurser}). More recently, Chodosh-Mantoulidis \cite{Chodosh-Mantoulidis} gives a description of this spectrum for the round two-sphere, and they found that $a(n)=\sqrt{\pi}$ in this case.

The main objective of this article is to compute the first widths of the unit closed ball (unit disk) $B^2 \subset \mathbb{R}^2$ and of planar full ellipses $E^2$ closed to $B^2$ (see Theorem \ref{theo.main.widths}).


The ideas to prove our results are similar to what was done by Aiex \cite{Aiex} for the 2-sphere $S^2$ and for ellipsoids close to $S^2.$ In this case, it is used a regularity result due to Allard and Almgren \cite{Allard-Almgren} which says that stationary integral 1-varifolds on closed Riemannian manifolds are finite geodesic networks. This means that the varifold is a finite union of geodesic segments such that the singularities are given by their possible stationary junctions. This regularity is an important tool, because it holds for the 1-varifolds obtained in the Min-Max Theorem, so for each $p \in \mathbb{N}$ there exist geodesic networks sufficiently close to achieve the $p$-width. In our case, we had to extend this regularity result for two dimensional manifolds $M^2$ with non-empty boundary and we did this supposing the boundary strictly convex. In this hypothesis we get (see Theorem \ref{theo.main.regularity}): \textit{If $V$ is a stationary integral 1-varifold which is $\mathbb{Z}_2$-almost \-{mi}\-{ni}\-{mi}\-{zing} in small annuli, then $V$ is a free boundary finite geodesic network.} This means that $V$ restricted to the interior of $M,$ $\mbox{int}(M),$ is a finite geodesic network, each geodesic segment has its interior in $\mbox{int}(M),$ and each point $p \in \partial M$ that is on the support of $V$ is given by the intersection of \-{boun}\-{da}\-{ries} of geodesic segments from $\mbox{int}(M)$ such that: if each of these segments at $p$ are \-{pa}\-{ra}\-{me}\-{te}\-{ri}\-{zed} to start at $p,$ then the resultant of the unit tangent vectors of the segments (and its multiplicities) is perpendicular to $\partial M$ at $p.$

The extra hypothesis that $V$ is $\mathbb{Z}_2$-almost minimizing in small annuli is a classical hypothesis to get regularity for the codimension one case and for $3\leq n+1\leq 7$ (see Pitts \cite{Pitts}, Simon \cite{LeonSimon} and Li and Zhou \cite{Li-Zhou}). Essentially, the regularity comes from the fact that almost minimizing varifolds are locally stable almost everywhere. The hypothesis of strictly convex boundary follows the ideas from \cite{Li-Zhou}, where they prove a regularity result for strictly convex boundary and $3\leq n+1 \leq 7.$ 

As in \cite{Aiex}, the regularity is an important step to calculate the $p$-widths. In fact, by that regularity the varifolds obtained in our adapted version of the Min-Max Theorem (see Theorem \ref{minmax.theorem}) are free boundary finite geodesic networks. We did a classification (Theorems \ref{classif.geod.net.disc} and  \ref{classif.geod.net.ellip}) of these varifolds which have low mass in $B^2$ and $E^2$ and then, we get candidates for the first $p$-widths. 

Finally, to compute the $p$-widths of $B^2$ we use $p$-sweepouts whose image are given by real algebraic varieties restricted to $B^2.$ We estimate these $p$-sweepouts and we combine with the classification to deduce the first widths. For $E^2$ we do similarly and using continuity.

This article is organized in the following way: in Section \ref{sec:2} we remember some basic theory and we give some definitions, also we explain how we adapt the Min-Max Theorem for our case (Theorem \ref{minmax.theorem} and Corollary \ref{approx.width}); in Section \ref{sec:3} we talk about free boundary geodesic networks and its properties, also we classify the free boundary finite geodesic networks which are $\mathbb{Z}_2$-almost \-{mi}\-{ni}\-{mi}\-{zing} in small annuli and have low mass in $B^2$ and $E^2,$ and we conclude proving our regularity result (Theorem \ref{theo.main.regularity}); in the Section \ref{sec:4} we compute the first $p$-widths of $B^2$ and $E^2$ (Theorem \ref{theo.main.widths}) using the regularity, classification and the estimates obtained for the $p$-sweepouts; and in the Section \ref{appendix} (Appendix) we prove the sharp estimate for these $p$-sweepouts.

\section{Preliminaries}
\label{sec:2}

Throughout this section $M$ denotes a compact Riemannian $(n+1)$-manifold, $n\geq 0,$ with smooth and possibly empty boundary $\partial M.$ We can always assume that $M$ is isometrically embedded in some Euclidean space $\mathbb{R}^Q$ for some $Q \in \mathbb{Z}_{+}.$ We denote by $B_r (p)$ as the open Euclidean ball of radius $r$ centered at $p\in \mathbb{R}^Q,$ and $A_{s, r}(p)$ the open annulus $B_r(p) \backslash \overline{{B}}_s (p)$ for $0<s<r.$  

When $M$ has non-empty boundary, the embedding above is obtained in the following way: we can extend $M$ to a closed Riemannian manifold $\widetilde{M}$ with the same dimension such that $M \subset \widetilde{M}$ (see Pigola-Veronelli \cite{Pigola-Veronelli}), and so, by the Nash's Theorem, we get the isometric embedding $\widetilde{M}\hookrightarrow \mathbb{R}^Q.$ We denote by $\widetilde{\mathcal{B}}_r(p)$ as the open geodesic ball in $\widetilde{M}$ of radius $r$ centered at $p.$ 

We consider the following spaces of vector fields:
$$\mathfrak{X} (M):=\{X \in \mathfrak{X} (\mathbb{R}^Q): X(p)\in T_p M \ \mbox{for all} \ p \in M\}   $$
and
$$\mathfrak{X}_{tan} (M):=\{X \in \mathfrak{X} (M): X(p)\in T_p (\partial M) \ \mbox{for all} \ p \in \partial M\}.$$

\begin{definition} 
	(Relative Topology) Given any subset $A \subset M,$ where $A$ is equipped with the subspace topology, the \textit{interior relative of }$A,$ $\mbox{int}_M (A),$ is defined as the set of all $p \in M$ such that there exists a relatively open neighborhood $U \subset A$ of $p.$ The \textit{exterior relative of} $A$ is denoted by $\mbox{int}_M (M\backslash A).$ And the \textit{relative boundary of} $A,$ $\partial_{rel} A,$ is the subset of $M$ such that is neither in the relative interior nor exterior of $A.$
\end{definition}

\begin{definition} 
	(Relative Convexity) A subset $\Omega \subset M$ is said to be a \textit{relatively convex} (respect. \textit{relatively strictly convex})  \textit{domain in} $M$ if it is a relatively open connected subset in $M$ whose relative boundary $\partial_{rel} \Omega$ is smooth and convex (respect. strictly convex) in $M.$
\end{definition}

\begin{definition}  (Fermi coordinates) 
	Given $p \in \partial M$ and suppose that the coordinates $(x_1, \cdots, x_n)$ are the geodesic normal coordinates of $\partial M$ in a neighborhood of $p.$ Take $t=\mbox{dist}_M( \ \cdot \ , \partial M),$ which is a smooth map well-defined in a relatively open neighborhood of $p$ in $M.$ \textit{The Fermi coordinates system of} $(M, \partial M)$ \textit{centered at} $p$ is given by the coordinates $(x_1, \cdots, x_n, t).$ Also, the \textit{Fermi distance function from} $p$ on a relatively open neighborhood of $p$ in $M$ is defined by
	$$\widetilde{r}:=\widetilde{r}_p(q)=|(x,t)|=\sqrt{x^2_1+\cdots + x^2_n+t^2}.$$
	
\end{definition}

\begin{definition} 
	Given $p \in \partial M,$ we define the \textit{Fermi half-ball and half-sphere of radius} $r$ \textit{centered at} $p$ respectively by
	$$\widetilde{\mathcal{B}}^{+}_{r}(p):=\{q \in M: \widetilde{r}_p (q)<r \}, \quad \widetilde{\mathcal{S}}^{+}_{r}(p):=\{q \in M: \widetilde{r}_p (q)=r \}.$$
	
\end{definition}

Also we consider the following open annular neighborhood in the Fermi coordinates:
$$\mathcal{A}_{s, t}(p):=\widetilde{\mathcal{B}}^+_t (p) \backslash \mbox{Clos}(\widetilde{\mathcal{B}}^+_s (p))$$
for $p \in \partial M,$ and $0<s<t.$ Where $\mbox{Clos}(\widetilde{\mathcal{B}}^+_s (p))$ denotes the closure of $\widetilde{\mathcal{B}}^+_s (p)$ on $M.$ Also, when $p \in \mbox{int}(M),$ we require that $t<\mbox{dist}_M(p, \partial M).$

The geometric properties of the Fermi half-ball and half-sphere can be summarized  in the following proposition:

\begin{proposition} {\normalfont \cite[Lemma A.5]{Li-Zhou} }\label{Fermi.Convex.theo}
	There exists a small constant $r_{Fermi}>0,$ depending only on the isometric embedding $M \subset \mathbb{R}^Q,$ such that for all $0< r < r_{Fermi}$
\begin{itemize}
  \item[(i)] $\widetilde{\mathcal{S}}^{+}_{r}(p)$ is a smooth hypersurface meeting $\partial M$ orthogonally;
  \item[(ii)] $\widetilde{\mathcal{B}}^{+}_{r}(p)$ is a relatively strictly convex \footnote{The convexity in \cite{Li-Zhou} is assumed to be strict convexity.} domain in $M;$
  \item[(iii)] $B_{r/2}(p) \cap M \subset \widetilde{\mathcal{B}}^{+}_{r}(p) \subset B_{2r}(p) \cap M.$
\end{itemize}

\end{proposition}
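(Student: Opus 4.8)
The plan is to work throughout in the Fermi coordinate chart $\Phi_p\colon(x_1,\dots,x_n,t)\mapsto q\in M$ centered at $p$, which is a diffeomorphism from a relatively open neighborhood of $0$ in the closed half-space $\{t\ge 0\}\subset\mathbb{R}^{n+1}$ onto a relatively open neighborhood of $p$ in $M$, and in which $\widetilde{r}_p(q)$ is literally the Euclidean norm of the coordinates of $q$. Two elementary facts about this chart drive the whole argument. First, since $(x_1,\dots,x_n)$ are geodesic normal coordinates of $\partial M$ at $p$ and the $t$-curves are unit-speed $M$-geodesics meeting $\partial M$ orthogonally, the pulled-back metric splits as $g=g_{ij}(x,t)\,dx^i\,dx^j+dt^2$ with $g_{ij}(0)=\delta_{ij}$; in particular $d(\Phi_p)_0$ is a linear isometry of $\mathbb{R}^{n+1}$ onto $T_pM$, and $g_{ij}=\delta_{ij}+O(\widetilde{r}_p)$ with $|\partial g|$ and $|\partial^2 g|$ bounded by a constant depending only on the curvature of $M$, the second fundamental form of $\partial M$ in $M$, and the relevant injectivity and focal radii. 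Second, the coordinate functions extend smoothly past $\partial M$: although $t=\mathrm{dist}_M(\cdot,\partial M)$ is only Lipschitz across $\partial M$, the function $t^2$ equals the square of the signed distance to $\partial M$ in $\widetilde{M}$, which is smooth in a two-sided collar, and $x\circ\pi$ (with $\pi$ the foot-point projection onto $\partial M$) is smooth there and constant along the normal geodesics; hence $\widetilde{r}_p^2=|x\circ\pi|^2+t^2$ is smooth on a full neighborhood of $p$ in $\widetilde{M}$.

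For part (iii) I would compose $\Phi_p$ with the isometric embedding $M\hookrightarrow\mathbb{R}^Q$: the resulting smooth map has differential at $0$ a linear isometry onto $T_pM\subset\mathbb{R}^Q$, so by Taylor's theorem there is $\rho_p>0$ with $\tfrac{2}{3}\,|q-p|_{\mathbb{R}^Q}\le\widetilde{r}_p(q)\le\tfrac{3}{2}\,|q-p|_{\mathbb{R}^Q}$ on $\{\widetilde{r}_p<\rho_p\}$. These bounds give at once $B_{r/2}(p)\cap M\subset\widetilde{\mathcal{B}}^{+}_{r}(p)\subset B_{2r}(p)\cap M$ for $r<\rho_p$. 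All the geometric data entering $\rho_p$---the metric and injectivity radius of $\partial M$, the collar and focal radii of $\partial M$ in $M$, and the extrinsic second fundamental form of $M$ in $\mathbb{R}^Q$---are determined by the fixed embedding and vary continuously with $p$ on the compact boundary, so $\inf_{p\in\partial M}\rho_p>0$.

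For part (i) I would first observe that $r^2$ is a regular value of $\widetilde{r}_p^2$ near $p$: from $g_{ij}=\delta_{ij}+O(\widetilde{r}_p)$ one gets $\nabla_g\widetilde{r}_p^2=2(x^1\partial_1+\dots+x^n\partial_n+t\,\partial_t)+O(\widetilde{r}_p^2)$, which is nonzero for $0<\widetilde{r}_p$ small. Hence $\{\widetilde{r}_p=r\}$ is a smooth embedded hypersurface of $\widetilde{M}$, and $\widetilde{\mathcal{S}}^{+}_{r}(p)$ is its intersection with $M=\{t\ge 0\}$, a smooth hypersurface with boundary contained in $\partial M$. At a point $q_0\in\widetilde{\mathcal{S}}^{+}_{r}(p)\cap\partial M$ one has $t(q_0)=0$, so $\nabla_g(t^2)$ vanishes there, while $|x\circ\pi|^2$ is constant along the normal geodesics and therefore $\nabla_g(|x\circ\pi|^2)|_{q_0}\in T_{q_0}\partial M$; thus $\nabla_g\widetilde{r}_p^2|_{q_0}\in T_{q_0}\partial M$, which forces the $g$-normal of $\widetilde{\mathcal{S}}^{+}_{r}(p)$ at $q_0$ to be tangent to $\partial M$, i.e.\ $\widetilde{\mathcal{S}}^{+}_{r}(p)$ meets $\partial M$ orthogonally. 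Finally $\widetilde{\mathcal{B}}^{+}_{r}(p)=\Phi_p(\{|(x,t)|<r,\ t\ge 0\})$ is the diffeomorphic image of a convex, hence connected, relatively open set, so it is relatively open and connected in $M$, and its relative boundary is exactly $\widetilde{\mathcal{S}}^{+}_{r}(p)$.

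For part (ii) it remains to show $\widetilde{\mathcal{S}}^{+}_{r}(p)$ is strictly convex in $M$. Its second fundamental form with respect to the normal pointing into $\widetilde{\mathcal{B}}^{+}_{r}(p)$ is $|\nabla_g\widetilde{r}_p|_g^{-1}\,\mathrm{Hess}_g\widetilde{r}_p$ restricted to the tangent space $T$ of the level set, which---being $\ker d\widetilde{r}_p$---is the Euclidean-orthogonal complement of the coordinate position vector and is independent of $g$. Restricted to $T$, the Euclidean Hessian $\partial^2\widetilde{r}_p$ equals $\tfrac{1}{r}$ times the Euclidean metric, the Christoffel correction $-\Gamma^{\gamma}_{\alpha\beta}\,\partial_\gamma\widetilde{r}_p$ is bounded by a constant $C$ (since $|\partial_\gamma\widetilde{r}_p|\le 1$ and $\Gamma$ is controlled by $|\partial g|$), and $|\nabla_g\widetilde{r}_p|_g=1+O(r)$; hence this second fundamental form equals $\tfrac{1}{r}\,g|_T+O(1)$, which is strictly positive definite once $r<1/C'$ for a suitable $C'$. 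The same local estimate holds at points of the equator $\{t=0\}$, and strict convexity of $\widetilde{\mathcal{S}}^{+}_{r}(p)$ together with the orthogonal meeting established in part (i) is exactly the strict relative convexity of $\widetilde{\mathcal{B}}^{+}_{r}(p)$ demanded by the definitions above. Taking $r_{Fermi}:=\inf_{p\in\partial M}\min\{\rho_p,1/C'\}$, which is positive by compactness of $\partial M$, finishes the argument. The main difficulty is not a single deep step but the uniform bookkeeping: every error term must be shown to depend only on the fixed isometric embedding $M\subset\mathbb{R}^Q$ and to be controlled uniformly as $p$ ranges over $\partial M$, and one must carefully track the boundary behavior along $\{t=0\}$, where the Fermi metric is no longer Euclidean even to first order.
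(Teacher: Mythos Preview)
The paper does not supply its own proof of this proposition: it is stated with a direct citation to \cite[Lemma A.5]{Li-Zhou} and no argument is given. So there is no ``paper's proof'' to compare against.

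That said, your proof sketch is essentially correct and is the standard route one would take to establish such a result. The key ingredients---that the Fermi chart has identity differential at the origin (yielding the distance comparison (iii)), that $\nabla_g\widetilde r_p^2$ is tangent to $\partial M$ along $\{t=0\}$ because $t^2$ has vanishing gradient there and $|x\circ\pi|^2$ is constant along normal geodesics (yielding orthogonality in (i)), and that the Hessian of $\widetilde r_p$ on the level sets is $\tfrac{1}{r}g|_T+O(1)$ (yielding strict convexity (ii))---are all sound. Your closing remark correctly flags the one genuinely delicate point: along $\{t=0\}$ the metric has the form $g_{ij}(x,t)=\delta_{ij}-2t\,h_{ij}(p)+O(\widetilde r_p^2)$, where $h$ is the second fundamental form of $\partial M$, so the Christoffel symbols $\Gamma^\gamma_{\alpha\beta}$ are bounded but do \emph{not} vanish at the origin; this is harmless for the $\tfrac{1}{r}+O(1)$ estimate but must not be oversold as $O(\widetilde r_p)$. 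The uniformity in $p\in\partial M$ via compactness is handled appropriately.
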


\subsection{Relative Flat Cycles}
\label{section2.1}

We recall some definitions that can be found in \cite[Section 2]{Marques-Neves-Morse-index} or in \cite[Section 4]{Federer}. For each $0\leq k \leq n+1,$ $\mathcal{R}_k(M; \mathbb{Z}_2)$ denotes the set of $k$\textit{-dimensional rectifiable mod 2 flat chains} in $\mathbb{R}^Q$ whose support lies in M. Given $T \in \mathcal{R}_k(M; \mathbb{Z}_2),$ we denote by $\mathcal{F}(T)$ and $\textbf{M}(T)$ the \textit{flat norm} and the \textit{mass} of $T,$ respectively. Also, the \textit{support} of $T$ is denoted by $\mbox{spt}(T).$

Consider the following sets:
$$\mathcal{I}_k(M; \mathbb{Z}_2)=\{T \in \mathcal{R}_k(M; \mathbb{Z}_2): \partial T \in \mathcal{R}_{k-1}(M; \mathbb{Z}_2)\},$$
$$\mathcal{Z}_k(M; \mathbb{Z}_2)=\{T \in \mathcal{I}_k(M; \mathbb{Z}_2): \partial T=0\},$$
$$\mathcal{Z}_k(M, \partial M; \mathbb{Z}_2)=\{T \in \mathcal{I}_k(M; \mathbb{Z}_2):  \mbox{spt}(\partial T) \subset \partial M\},$$
and
$$\mathcal{Z}_{k, rel}(M, \partial M; \mathbb{Z}_2)= \mathcal{Z}_k(M, \partial M; \mathbb{Z}_2)/\mathcal{I}_k(\partial M; \mathbb{Z}_2).$$

The set $\mathcal{Z}_k(M; \mathbb{Z}_2)$ is the space of \textit{mod 2 (integral) fla}t $k$\textit{-cycles} in $M$ and we call the quotient space $\mathcal{Z}_{k, rel}(M, \partial M; \mathbb{Z}_2)$ as the space of \textit{relative (mod 2) flat cycles}. When $\partial M=\emptyset,$ we have that $\mathcal{Z}_{k, rel} (M, \partial M; \mathbb{Z}_2)$ is identical to $\mathcal{Z}_k (M; \mathbb{Z}_2).$

The support $\mbox{spt}([T])$ of a class $[T]\in \mathcal{Z}_{k, rel,}(M, \partial M; \mathbb{Z}_2)$ is defined by
$\mbox{spt}([T])= \bigcap_{T \in [T]} \mbox{spt}(T).$ Also,  the mass norm and flat norm in the space of relative cycles are defined, respectively, by
$$\textbf{M}([T])=\inf_{T \in [T]} \textbf{M}(T), \quad \mathcal{F}([T])= \inf_{T \in [T]} \mathcal{F}([T]),$$
for $[T]\in \mathcal{Z}_{k, rel,}(M, \partial M; \mathbb{Z}_2).$

We consider the space of relative flat cycles $\mathcal{Z}_{k, rel}(M, \partial M; \mathbb{Z}_2)$ endowed with the flat norm $\mathcal{F}$. When it is endowed with the topology of the mass norm, we denote it by $\mathcal{Z}_k (M, \partial M; \textbf{M}; \mathbb{Z}_2).$

Note that each $[T] \in \mathcal{Z}_{k, rel} (M, \partial M; \mathbb{Z}_2)$ has a unique \textit{canonical representative} $k$-chain $T^0 \in [T]$  such that $T^0\mres \partial M=0,$ in particular, $\textbf{M}([T])=\textbf{M}(T^0)$ and $\mbox{spt}([T])=\mbox{spt}(T^0),$ see \cite[Lemma 3.3]{Li-Zhou}. Also, it follows that $\mathcal{F}([T])\leq \textbf{M}([T])$. This canonical representative is obtained taking $T^0=S\mres (M \backslash \partial M)$ for any $S \in [T].$ To keep the notation simple we denote $[T]$ by $T.$

\subsection{Varifolds in manifolds with boundary}

The following definitions can be found in \cite{Pitts} and \cite{LeonSimon}. We denote by $R\mathcal{V}_k (M)$ the set of $k$\textit{-dimensional rectifiable varifolds} in ${\mathbb{R}}^Q$ with support contained in $M$ and equipped with the weak topology. Also $\mathcal{V}_k (M)$ is the closure of $R\mathcal{V}_k (M)$ in the weak topology.

 Given a varifold $V \in \mathcal{V}_k (M),$ the \textit{weight} and the \textit{support} of $V$ are denoted by $\|V\|$ and $\mbox{spt}\|V\|,$ respectively. Also, for $x \in \mbox{spt}\|V\|,$ we denote by $\mbox{VarTan}(V, x) \subset \mathcal{V}(\mathbb{R}^Q)$ as the set of  \textit{the varifold tangents of} $V$ \textit{at} $x,$ which is a natural generalization of tangent planes for smooth surfaces. 
 
 Given $V, W \in \mathcal{V}_k (M),$ the \textit{Pitts'} $\textbf{F}$-\textit{metric} is denoted by $\textbf{F}(V, W).$ This metric induces precisely the usual weak topology on the set $\{V \in \mathcal{V}_k(M): \|V\|(M)\leq L\},$ for each constant $L>0.$

If $R \subset M$ is a $k$-rectifiable set and $\theta$ is a $\mathcal{H}^k$-integrable non-negative  function on $R$, we denote by $\upsilon(R, \theta) \in \mathcal{V}_k (M)$ as being the \textit{rectifiable} $k$-\textit{varifold associated to} $R$ \textit{with multiplicity function} $\theta.$ If $\theta$ assumes only positive integers values, we say that $\upsilon(R, \theta)$ is an \textit{integral varifold}. We denote by  $I\mathcal{V}_k (M)$ the space of $k$\textit{-dimensional integral varifolds in} $M.$

Given $T \in \mathcal{R}_k (M; \mathbb{Z}_2),$ we denote by $|T| \in \mathcal{V}_k(M)$ the varifold induced by the support of $T$ and its coefficients. And for $T\in \mathcal{Z}_{k, rel,}(M, \partial M; \mathbb{Z}_2),$ we take $|T|=|T^0|.$

Given $V\in \mathcal{V}_k (M),$ let $X \in \mathfrak{X}_{tan} (M)$ be a generator of a one-parameter family of diffeomorphisms $\phi_t$ of $\mathbb{R}^Q$ with $\phi_0(M)=M,$ we have that the \textit{first variation of} $V$ \textit{along the vector field} $X$ is given by 
$$\delta V (X):=\left.\frac{d}{dt}\right|_{t=0} \textbf{M}((\phi_t)_{\sharp} V),$$
where $(\phi_t)_{\sharp} V$ is the \textit{pushfoward varifold of} $V$  (see \cite[39.2]{LeonSimon}).

\begin{definition} 
	Let $U \subset M$ be a relatively open subset. A varifold $V \in \mathcal{V}_k (M)$ is said to be \textit{stationary in} $U$ \textit{with free boundary} if $\delta V (X)=0$ for any $X \in \mathfrak{X}_{tan} (M)$ compactly supported in $U.$
\end{definition}

Note that a free boundary minimal submanifold is also stationary with free boundary. However, the reverse may not be true. 

 
 By the relative topology we consider the $k$\textit{-dimensional density}, $\Theta^k (V, x),$ of a varifold $V \in \mathcal{V}_k (M)$ as the density restricted to $M,$ that is, given $x \in M,$ we take the limit, if it exists,
$$\Theta^k(V, x):=\lim_{\rho \rightarrow 0} \frac{\|V\|(B_\rho (x) \cap M)}{\rho^k |B^k|},$$
 where $|B^k|$ is the volume of the $k$-dimensional unit Euclidean ball $B^k.$ For stationary varifolds the limit above always exists.
 
 For a fixed $x,$ define the function
 $$\Theta_x^k(V, \rho):=\frac{\|V\|(B_\rho (x) \cap M)}{\rho^k |B^k|}.$$
 In the case $\partial M=\emptyset,$ we have $B_{\rho}(x) \subset M$ and it is known that the function above for stationary varifolds satisfies the monotonicity formula \cite[Sections 17 and 40]{LeonSimon}: $\Theta_x^k(V, \rho)$ is non-decreasing in $\rho.$  Also, it is well known that any tangent varifold of a stationary varifold is a stationary Euclidean cone and $\Theta^k_x(C, \rho)=\Theta^k(V, x)$ for any $C \in \mbox{VarTan}(V, x)$ and for all $\rho >0.$  We write this fact as $\Theta^k_x(C, \infty)=\Theta^k(V, x).$

%

\subsection{Min-Max Definitions}

In the following we use the notions of homotopy as in \cite[Section 2]{Marques-Neves}, just replacing $\mathcal{Z}_{n}(M; \textbf{M}; \mathbb{Z}_2)$ by $\mathcal{Z}_{n, rel}(M,\partial M; \textbf{M}; \mathbb{Z}_2)$ in those definitions. Here we are taking $k= \mbox{dim} (M) -1=n,$ in the notations of the previous sections.

The set $[X, \mathcal{Z}_{n, rel}(M,\partial M; \textbf{M}; \mathbb{Z}_2)]^{\sharp}$ denotes the set of all equivalence classes of $(X, \textbf{M})$-homotopy classes of mappings into $\mathcal{Z}_{n, rel}(M,\partial M; \textbf{M}; \mathbb{Z}_2).$

Given an equivalence class $\Pi \in [X, \mathcal{Z}_{n, rel}(M,\partial M; \textbf{M}; \mathbb{Z}_2)]^{\sharp},$  each $S \in \Pi$ is given by $S=\{\phi_i\}_{i \in \mathbb{N}} $ for some $(X, \textbf{M})$-homotopy sequence of mappings $\{\phi_i\}_{i \in \mathbb{N}}$ into $\mathcal{Z}_{n, rel}(M,\partial M; \textbf{M}; \mathbb{Z}_2).$ We define
$$\textbf{L}(S)=\limsup_{i\rightarrow \infty} \max \{\textbf{M}(\phi_i(x)); x \in \mbox{dmn}(\phi_i)\}.$$

\begin{definition} 
	The \textit{width} of $\Pi$ is defined by
	$$\textbf{L}(\Pi)=\inf \{\textbf{L}(S): S \in \Pi\}.$$
\end{definition}

We say that $S \in \Pi$ is a \textit{critical sequence} for $\Pi$ if $\textbf{L}(S)=\textbf{L}(\Pi),$ and the \textit{critical set} $\textbf{C}(S)$ of a critical sequence $S$ is given by
$$\textbf{C}(S)=\textbf{K}(S) \cap \{V \in \mathcal{V}_n (M): \|V\|(M)=\textbf{L}(S)\},$$
where
$$\textbf{K}(S)=\Big\{V \in \mathcal{V}_n (M): V=\lim\limits_{j\rightarrow\infty}|\phi_{i_j}(x_j)| \ \mbox{as varifolds, for some subsequence}   $$
$$\{\phi_{i_j}\}\subset S \ \mbox{and} \ x_j \in \ \mbox{dmn}(\phi_{i_j}) \Big\}.$$

From \cite[Lemma 15.1]{Marques-Neves-Willmore} (see also \cite[4.1 (4)]{Pitts}) we know that there exist critical sequences for each class $\Pi$, and from \cite[4.2 (2)]{Pitts}, $\textbf{C}(S)$ is compact and non-empty.

\begin{definition}  \cite[Section 2.5]{Marques-Neves-Liokumovich} 
	 Let $X \subset I^m$ be a cubical subcomplex. We say that a continuous map in the flat topology $\Phi: X \rightarrow \mathcal{Z}_{n, rel}(M, \partial M; \mathbb{Z}_2)$ is a $p$-\textit{sweepout} if the $p$-th cup power of $\Phi^* (\overline{\lambda})$ is nonzero in $H^{p}(X; \mathbb{Z}_2)$, where $\overline{\lambda}$ is the generator of $H^{1}(\mathcal{Z}_{n, rel}(M, \partial M; \mathbb{Z}_2); \mathbb{Z}_2).$  
	\end{definition}
	

%
%

\begin{definition} 
A flat continuous map $\Phi: X \rightarrow \mathcal{Z}_{n, rel}(M, \partial M; \mathbb{Z}_2)$ has \textit{no concentration of mass} if
$$\lim_{r \rightarrow 0} \sup \{\|\Phi(x)\|(B_r (p)\backslash \partial M): x \in \mbox{dmn}(\Phi), p \in M\}=0.$$
The set of all $p$-sweepouts with no concentration of mass is denoted by $\mathcal{P}_{p}(M)$. 

\end{definition}

\begin{definition} 
	The $p$\textit{-width of} $M$ is given by
	$$\omega_{p}(M)=\inf_{\Phi \in \mathcal{P}_{p}(M)} \sup \{\textbf{M}(\Phi(x)): x \in \mbox{dmn}(\Phi)\}.$$
\end{definition}


\subsection{Min-Max Theorem}

\begin{definition}
	 Let $U \subset M$ be a relatively open subset, we say that a varifold $V \in \mathcal{V}_k$ is $\mathbb{Z}_2$-\textit{almost minimizing in} $U$ \textit{with free boundary} if for every $\epsilon>0$ we can find $\delta>0$ and $T \in \mathcal{Z}_{k, rel}(M,\partial M; \mathbb{Z}_2)$ with $\textbf{F} (V, |T|)<\epsilon$ and such that the following property holds true: if $T=T_0, T_1, \ldots, T_m\in \mathcal{Z}_{k, rel}(M,\partial M; \mathbb{Z}_2)$ with
	 \begin{itemize}
  \item[$\bullet$] $\mbox{spt}(T-T_i)\subset U$ for $i=1, \ldots, m;$
  \item[$\bullet$] $\mathcal{F}(T_i- T_{i-1})\leq \delta$ for $i=1, \ldots, m$ and
  \item[$\bullet$] $\textbf{M}(T_i)\leq \textbf{M}(T)+\delta$ for $i=1, \ldots, m$
\end{itemize}
then $\textbf{M}(T_m)\geq \textbf{M}(T)-\epsilon.$

\end{definition}

Roughly speaking, it means that we can approximate $V$ by a varifold induced from a current $T$ such that for any deformation of $T$ by a discrete family supported in $U,$ and with the mass not increasing too much (parameter $\delta$), then at the end of the deformation the mass cannot be deformed down too much (parameter $\epsilon$).

A varifold $V\in \mathcal{V}_k (M)$ is said to be $\mathbb{Z}_2$-\textit{almost minimizing in annuli with free boundary} if for each $p \in \mbox{spt}\|V\|$ there exists $r>0$ such that $V$ is $\mathbb{Z}_2$-almost minimizing in the annuli $M \cap A_{s, r} (p)=M \cap B_r (p) \backslash \overline{B}_s (p)$ for all $0<s<r$. If $p \notin \partial M,$ we require that $r < \mbox{dist} (p, \partial M).$ By Proposition \ref{Fermi.Convex.theo} (iii), this definition with respect to $A_{s, r}(p)$ or $\mathcal{A}_{s, r}(p)$ is equivalent. When $\partial M=\emptyset,$ we do not need use the expression `with free boundary'.

If $V \in \mathcal{V}_k (M)$ is $\mathbb{Z}_2$-almost minimizing in a relatively open set $U \subset M$ with free boundary, then $V$ is stationary in $U$ with free boundary (\cite{Pitts}, Th. 3.3).

The next result is a \textit{tightening process} to a critical sequence $S \in \Pi$ so that every $V\in \textbf{C}(S)$ becomes a stationary varifold with free boundary.

\begin{theorem} \label{sequencia.critica}
	Let $\Pi \in [X, \mathcal{Z}_{n, rel}(M,\partial M; \textbf{M}; \mathbb{Z}_2)]^{\sharp}$. For each critical sequence $S^* \in \Pi$, there exists another critical sequence $S \in \Pi$ such that $\textbf{C}(S)\subset \textbf{C}(S^*)$ and each $V \in \textbf{C}(S)$ is stationary in $M$ with free boundary.
\end{theorem}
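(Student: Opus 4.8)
The plan is to carry out the classical \emph{pull-tight} (tightening) construction of Pitts \cite{Pitts}, in the streamlined form of Marques--Neves \cite{Marques-Neves}, modified for the free boundary setting by insisting that every deformation vector field lie in $\mathfrak{X}_{tan}(M)$: a field tangent to $\partial M$ generates ambient isotopies $\{\Psi_t\}$ with $\Psi_t(M)=M$ and $\Psi_t(\partial M)=\partial M$, so pushforward maps $\mathcal{Z}_{n,rel}(M,\partial M;\mathbb{Z}_2)$ into itself and is compatible with the canonical representatives $T^0$. \textbf{Step 1 (the non-stationary locus).} Set $L=\mathbf{L}(\Pi)=\mathbf{L}(S^{*})$, and let $\mathcal{U}$ be the set of $W\in\mathcal{V}_n(M)$ with $\|W\|(M)\le L+1$ that are \emph{not} stationary in $M$ with free boundary; this set is open in the $\mathbf{F}$-topology because $W\mapsto\delta W(X)$ is weakly continuous for each compactly supported $X$, and because mass is weakly lower semicontinuous. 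If $\mathbf{C}(S^{*})\cap\mathcal{U}=\emptyset$ we take $S=S^{*}$; otherwise, for each $V\in\mathcal{U}$ pick a compactly supported $X_V\in\mathfrak{X}_{tan}(M)$ with $\delta V(X_V)<0$, together with $c_V>0$ and an $\mathbf{F}$-open $U_V\ni V$ on which $\delta W(X_V)\le -c_V$.

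\textbf{Step 2 (a continuous tightening map).} Since the $\mathbf{F}$-metric is a metric on each mass-bounded set, $\mathcal{U}$ is metrizable, hence paracompact; take a locally finite refinement of $\{U_V\}_{V\in\mathcal{U}}$ and a subordinate partition of unity $\{\psi_j\}$, with associated fields $X_j\in\mathfrak{X}_{tan}(M)$, and put $H(W)=\sum_j\psi_j(W)X_j$ on $\mathcal{U}$, extended by $0$ elsewhere on $\{\|W\|(M)\le L+1\}$. Then $W\mapsto H(W)$ is continuous into the compactly supported fields of $\mathfrak{X}_{tan}(M)$, it vanishes on varifolds that are stationary with free boundary, and $\delta W(H(W))=\sum_j\psi_j(W)\,\delta W(X_j)<0$ whenever $\sum_j\psi_j(W)>0$. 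Let $\{\Psi^{W}_{t}\}$ be the flow of $H(W)$; it fixes $M$ and $\partial M$. Combining the strict negativity of the first variation with the elementary bound $\mathbf{M}\big((\Psi^{W}_{t})_{\sharp}T\big)\le(1+Ct)\,\mathbf{M}(T)$ and a compactness argument, one produces, in the usual way, a continuous $\tau\colon\{\|W\|(M)\le L+1\}\to[0,1]$ with $\tau=0$ where $H=0$, such that the one-step deformation $T\mapsto\mathbf{D}(T):=(\Psi^{|T|}_{\tau(|T|)})_{\sharp}T$ never increases mass and satisfies: for every compact $K\subset\mathcal{U}$ there is $m_K>0$ with $\mathbf{M}(\mathbf{D}(T))\le\mathbf{M}(T)-m_K$ whenever $|T|\in K$.

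\textbf{Step 3 (deform the sequence and conclude).} Write $S^{*}=\{\phi_i\}_{i\in\mathbb{N}}$ and set $\tilde\phi_i=\mathbf{D}\circ\phi_i$, which takes values in $\mathcal{Z}_{n,rel}(M,\partial M;\mathbb{Z}_2)$ by the tangency of $H$. The routine admissibility checks — continuity of $\tilde\phi_i$ in the mass norm, no concentration of mass, fineness $\to0$, and the $(X,\mathbf{M})$-homotopy $t\mapsto(\Psi^{|\phi_i(x)|}_{t\,\tau(|\phi_i(x)|)})_{\sharp}\phi_i(x)$ from $\phi_i$ to $\tilde\phi_i$ — show $\tilde S:=\{\tilde\phi_i\}\in\Pi$. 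Since $\mathbf{D}$ never increases mass, $\mathbf{L}(\tilde S)\le L$, hence $\mathbf{L}(\tilde S)=L$ and $\tilde S$ is critical. If $V\in\mathbf{C}(\tilde S)$, write $V=\lim_j|\tilde\phi_{i_j}(x_j)|$ with $\|V\|(M)=L$; then $\mathbf{M}(\phi_{i_j}(x_j))\to L$ as well, so after passing to a subsequence $|\phi_{i_j}(x_j)|\to W\in\mathbf{K}(S^{*})$ with $\|W\|(M)=L$, i.e. $W\in\mathbf{C}(S^{*})$. If $W$ were in $\mathcal{U}$, a compact neighbourhood $K\subset\mathcal{U}$ of $W$ would give $\mathbf{M}(\tilde\phi_{i_j}(x_j))\le L-m_K$ for large $j$, contradicting $\|V\|(M)=L$; hence $W$ is stationary with free boundary, $\tau(|\phi_{i_j}(x_j)|)\to\tau(W)=0$, and therefore $V=W$. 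Thus every $V\in\mathbf{C}(\tilde S)$ lies in $\mathbf{C}(S^{*})$ and is stationary in $M$ with free boundary; take $S=\tilde S$.

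\textbf{Expected main obstacle.} The substantive points, beyond bookkeeping, are: (i) arranging the tightening map $W\mapsto H(W)$ to be continuous with values in $\mathfrak{X}_{tan}(M)$ rather than $\mathfrak{X}(M)$, so that its flow fixes $\partial M$ and the deformed maps stay in $\mathcal{Z}_{n,rel}(M,\partial M;\mathbb{Z}_2)$; and (ii) verifying that $\{\tilde\phi_i\}$ is a genuine $(X,\mathbf{M})$-homotopy sequence in $\Pi$ — continuity in the mass topology, no concentration of mass, fineness $\to0$, and the homotopy to $\{\phi_i\}$. In the closed case all of this is standard (Pitts \cite{Pitts}, Marques--Neves \cite{Marques-Neves}); here it requires only that the constraint $X\in\mathfrak{X}_{tan}(M)$ be carried through every step and that one work consistently with the canonical representatives of relative classes, exactly as in the free boundary min-max of Li--Zhou \cite{Li-Zhou}.
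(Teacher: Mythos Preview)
Your proposal is correct and follows essentially the same approach as the paper: the classical pull-tight construction of Pitts and Marques--Neves, adapted to the free boundary setting by taking every deformation vector field in $\mathfrak{X}_{tan}(M)$ so that the generated isotopies preserve $\partial M$ and act on $\mathcal{Z}_{n,rel}(M,\partial M;\mathbb{Z}_2)$. The paper's proof is simply a pointer to \cite[Prop.~8.5]{Marques-Neves-Willmore} together with the free-boundary modifications from \cite[Th.~4.17]{Li-Zhou} and the relative-cycle interpolation results of \cite{Marques-Neves-Liokumovich}; you have written out the content behind those references.

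One small correction in Step~3: the ``admissibility checks'' you list --- continuity of $\tilde\phi_i$ in the mass norm and no concentration of mass --- belong to the continuous-sweepout framework, not to the discrete $(X,\mathbf{M})$-homotopy sequences $\{\phi_i\}$ in play here. What actually needs to be checked is that the deformed maps $\tilde\phi_i$ have fineness tending to $0$, uniformly bounded mass, and that the discretization of your homotopy $t\mapsto(\Psi^{|\phi_i(x)|}_{t\,\tau(|\phi_i(x)|)})_{\sharp}\phi_i(x)$ furnishes an $(X,\mathbf{M})$-homotopy from $\phi_i$ to $\tilde\phi_i$ with fineness $\to 0$; these follow from uniform continuity of the tightening map on mass-bounded sets, exactly as in the references.
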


\begin{proof}
The proof of this result is essentially the same as  \cite[Prop. 8.5]{Marques-Neves-Willmore}. The only modifications are the use of Th. 13.1 and 14.1 of \cite{Marques-Neves-Willmore}, as noted in \cite[Th. 4.17]{Li-Zhou}. In place of \cite[Th. 14.1]{Marques-Neves-Willmore} we use \cite[Th. 2.11]{Marques-Neves-Liokumovich}; and a compatible version of \cite[Th. 13.1]{Marques-Neves-Willmore} follows from \cite[Lemma A. 1]{Marques-Neves-Liokumovich} in the same way that the \cite[Th. 13.1]{Marques-Neves-Willmore} follows from  \cite[Lemma 13.4]{Marques-Neves-Willmore}.
\end{proof}

With the tightening process above we can prove the existence of a $\mathbb{Z}_2$-almost minimizing varifolds with free boundary such that it reaches the width of a chosen $(X; \textbf{M})$-homotopy class $\Pi \in [X, \mathcal{Z}_{n, rel}(M,\partial M; \textbf{M}; \mathbb{Z}_2)]^{\sharp}.$ When $\partial M= \emptyset,$ it was first proved by Pitts \cite[Th. 4.10]{Pitts} with maps in cubical domains for $1\leq k\leq n,$ and later by Marques and Neves \cite[Th. 2.9]{Marques-Neves} for cubical subcomplex domains when $k=n$. For the case with boundary, a version for cubical domains was proved by Li and Zhou \cite[Th. 4.21]{Li-Zhou}. We present below a version for the case $\partial M \neq \emptyset$ and take maps in cubical subcomplex domains when $k=n$.

\begin{theorem} \label{minmax.theorem}
	For any $\Pi \in [X, \mathcal{Z}_{n, rel}(M,\partial M; \textbf{M}; \mathbb{Z}_2)]^{\sharp}$, there exists $V \in I\mathcal{V}_n (M)$ such that
	\begin{itemize}
  \item[(i)] $\|V\|(M)=\textbf{L}(\Pi);$
  \item[(ii)] $V$ is stationary in $M$ with free boundary;
  \item[(iii)] $V$ is $\mathbb{Z}_2$-almost minimizing in small annuli with free boundary.
\end{itemize}
 
\end{theorem}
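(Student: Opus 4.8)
The plan is to run the Almgren--Pitts min-max scheme of \cite[Thm.~4.10]{Pitts} with the two refinements we need layered on top: the passage from full cubes $I^m$ to cubical subcomplexes $X$ for codimension-one cycles ($k=n$), carried out in \cite[Thm.~2.9]{Marques-Neves}, and the free-boundary modifications of \cite[Thm.~4.21]{Li-Zhou}. Items (i) and (ii) are in fact already available: starting from any critical sequence $S^*\in\Pi$, the tightening process of Theorem~\ref{sequencia.critica} produces a critical sequence $S\in\Pi$ with $\textbf{C}(S)\subset\textbf{C}(S^*)$ every element of which is stationary in $M$ with free boundary; since $\textbf{C}(S)$ is nonempty (by \cite[4.2(2)]{Pitts}) and every $V\in\textbf{C}(S)$ satisfies $\|V\|(M)=\textbf{L}(S)=\textbf{L}(\Pi)$ by definition of the critical set, it only remains to locate inside $\textbf{C}(S)$ an element that is moreover $\mathbb{Z}_2$-almost minimizing in small annuli with free boundary, and to verify that such an element is integral.

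For (iii) I would argue by contradiction, exactly in the shape of Pitts' combinatorial argument. Suppose no $V\in\textbf{C}(S)$ is $\mathbb{Z}_2$-almost minimizing in small annuli with free boundary; then each such $V$ admits a point $p=p(V)$ for which, along a cofinal family of scales, $V$ fails to be $\mathbb{Z}_2$-almost minimizing in the corresponding annulus (Euclidean type $A_{s,r}(p)$ or, when $p\in\partial M$, equivalently Fermi type $\mathcal{A}_{s,r}(p)$ by Proposition~\ref{Fermi.Convex.theo}(iii)). One then invokes the ``almost minimizing in the sense of $(X,\textbf{M})$'' formalism together with the three- and four-annulus deformation lemmas to deform the critical sequence $S$ into a new $(X,\textbf{M})$-homotopy sequence with strictly smaller value of $\textbf{L}$, contradicting $\textbf{L}(S)=\textbf{L}(\Pi)$. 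Two adaptations are required. Near a boundary point the deformations must be performed inside Fermi half-annuli, using that $\widetilde{\mathcal{B}}^{+}_r(p)$ is relatively strictly convex and meets $\partial M$ orthogonally (Proposition~\ref{Fermi.Convex.theo}(i)--(ii)), so that nearest-point projection and the ``freeing'' of competitors along $\partial M$ are well behaved and the comparison is done with relative cycles and their canonical representatives; this is precisely the setting of \cite{Li-Zhou}, and the interpolation estimates used in place of \cite[Thm.~13.1,~14.1]{Marques-Neves-Willmore} are \cite[Lemma~A.1]{Marques-Neves-Liokumovich} and \cite[Thm.~2.11]{Marques-Neves-Liokumovich}, as in the proof of Theorem~\ref{sequencia.critica}. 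Since the domain $X$ is a cubical subcomplex rather than a cube, the discrete-to-continuous and continuous-to-discrete steps are carried out relative to subdivisions of $X$; this is the purely combinatorial bookkeeping of \cite[Sections~2--3]{Marques-Neves}, which does not interact with $\partial M$.

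Finally, once a $V\in\textbf{C}(S)$ has been shown to be $\mathbb{Z}_2$-almost minimizing in small annuli with free boundary, one obtains $V\in I\mathcal{V}_n(M)$ in the standard way. The varifold $V$ is a limit of varifolds $|\phi_{i_j}(x_j)|$ induced by mod $2$ relative cycles, hence of rectifiable varifolds of multiplicity one on their supports; combined with stationarity and the almost minimizing property, the usual replacement argument applies: in each small annulus $V$ has a replacement obtained as a limit of $\mathbb{Z}_2$-mass-minimizing relative cycles, which by the compactness of such minimizers together with Allard's interior and free-boundary regularity is a stationary integral varifold with free boundary; matching $V$ to its replacements on overlapping annuli and invoking the constancy theorem forces the density of $V$ to be a positive integer $\mathcal{H}^n$-a.e. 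This is the free-boundary version of \cite[Thm.~3.3]{Pitts} worked out in \cite[Section~4]{Li-Zhou}.

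I expect the main obstacle to be the compatibility in the combinatorial step, namely ensuring that the $\textbf{F}$-metric interpolation (with no concentration of mass) and the annulus deformations can be run simultaneously in a way that respects both the quotient structure of $\mathcal{Z}_{n,rel}(M,\partial M;\mathbb{Z}_2)$ with its canonical representatives and the subdivision scheme on $X$. Each ingredient is in the literature --- \cite{Li-Zhou} for the boundary side, \cite{Marques-Neves} for the subcomplex side --- so the work is in checking that combining them produces no new phenomena; this should be the case, since the boundary modifications are entirely local near $\partial M$ while the subcomplex modifications are entirely combinatorial on $X$.
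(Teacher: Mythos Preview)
Your proposal is correct and follows essentially the same route as the paper: invoke the tightening of Theorem~\ref{sequencia.critica} for (i)--(ii), then run Pitts' combinatorial argument \cite[Thm.~4.10]{Pitts} with the free-boundary modifications of \cite[Thm.~4.21]{Li-Zhou} and the cubical-subcomplex bookkeeping of \cite[Thm.~2.9]{Marques-Neves} for (iii) and integrality. The paper's own proof is in fact just these citations; your write-up simply unpacks them.
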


\begin{proof}
	Using the previous theorem, we can follow the same procedure in the proof of \cite[Th. 4.10]{Pitts} (see also \cite[Th. 4.21]{Li-Zhou}). To prove that $V$ is $\mathbb{Z}_2$-almost minimizing in small annuli with free boundary on $\partial M,$ just do as in the proof of \cite[Th. 4.21]{Li-Zhou}. 	
\end{proof}

We present now an important result that we use in the last section.

\begin{corollary} \label{approx.width}
	
For $p \in \mathbb{N}$ and each $\epsilon>0,$ we can find $V \in I\mathcal{V}_n (M)$ such that
	
		\begin{itemize}
  \item[(i)] $\omega_p(M)\leq\|V\|(M)\leq \omega_p(M)+\epsilon;$
  \item[(ii)] $V$ is stationary in $M$ with free boundary;
  \item[(iii)] $V$ is $\mathbb{Z}_2$-almost minimizing in small annuli with free boundary.
		\end{itemize}
		
\end{corollary}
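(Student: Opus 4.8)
The plan is to derive this directly from Theorem \ref{minmax.theorem} applied to a carefully chosen homotopy class, together with the definition of $\omega_p(M)$ as an infimum over $p$-sweepouts. First I would recall the standard relation between $p$-sweepouts and $(X,\textbf{M})$-homotopy classes: given $\Phi \in \mathcal{P}_p(M)$ with domain a cubical subcomplex $X$, one discretizes $\Phi$ in the flat topology and produces, after a fine enough subdivision, an $(X,\textbf{M})$-homotopy sequence of mappings $\{\phi_i\}$ with no concentration of mass and with $\limsup_i \sup_x \textbf{M}(\phi_i(x))$ arbitrarily close to $\sup_x \textbf{M}(\Phi(x))$; this is the content of the interpolation machinery (\cite[Th. 2.11]{Marques-Neves-Liokumovich}, \cite[Lemma A.1]{Marques-Neves-Liokumovich}) already invoked in the proof of Theorem \ref{sequencia.critica}. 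Denote by $\Pi(\Phi)$ the resulting class in $[X,\mathcal{Z}_{n,rel}(M,\partial M;\textbf{M};\mathbb{Z}_2)]^{\sharp}$. The key inequality to establish is $\textbf{L}(\Pi(\Phi)) \leq \sup_x \textbf{M}(\Phi(x))$, which follows from the construction of the discretization, and hence $\inf_{\Phi \in \mathcal{P}_p(M)} \textbf{L}(\Pi(\Phi)) \leq \omega_p(M)$.

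Next I would fix $\epsilon>0$ and choose $\Phi \in \mathcal{P}_p(M)$ with $\sup_x \textbf{M}(\Phi(x)) \leq \omega_p(M) + \epsilon/2$, and pass to $\Pi = \Pi(\Phi)$, so that $\textbf{L}(\Pi) \leq \omega_p(M) + \epsilon/2 < \omega_p(M) + \epsilon$. Applying Theorem \ref{minmax.theorem} to this $\Pi$ yields $V \in I\mathcal{V}_n(M)$ with $\|V\|(M) = \textbf{L}(\Pi)$, stationary in $M$ with free boundary, and $\mathbb{Z}_2$-almost minimizing in small annuli with free boundary — giving at once conditions (ii) and (iii), and the upper bound $\|V\|(M) \leq \omega_p(M) + \epsilon$ in (i). For the lower bound $\omega_p(M) \leq \|V\|(M)$ in (i), I would argue that $\textbf{L}(\Pi(\Phi)) \geq \omega_p(M)$ for every $\Phi \in \mathcal{P}_p(M)$: indeed any critical sequence $S \in \Pi(\Phi)$ can, after the reverse interpolation (\cite[Th. 2.11]{Marques-Neves-Liokumovich}, turning a discrete sequence back into a continuous map in the flat topology with controlled mass and no concentration of mass), be converted into a genuine $p$-sweepout with no concentration of mass whose max mass is at most $\textbf{L}(S) + \epsilon'$; the nontriviality of the $p$-th cup power is preserved because the discretization and its inverse are homotopic to the original maps and the cohomological condition is a homotopy invariant. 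Taking $\inf$ over $S$ and then letting $\epsilon' \to 0$ gives $\textbf{L}(\Pi(\Phi)) \geq \omega_p(M)$, hence $\|V\|(M) = \textbf{L}(\Pi) \geq \omega_p(M)$, completing (i).

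The main obstacle is the bookkeeping in the two-way passage between continuous $p$-sweepouts in the flat topology (on which $\omega_p$ is defined) and $(X,\textbf{M})$-homotopy sequences in the mass topology (on which $\textbf{L}(\Pi)$ and Theorem \ref{minmax.theorem} are defined): one must ensure that no concentration of mass is preserved in both directions, that the max-mass is controlled up to arbitrarily small error each way, and that the topological $p$-sweepout condition (nonvanishing $p$-th cup power of $\Phi^*(\overline\lambda)$) survives discretization and reconstruction. All of this is available in the cited work of Marques--Neves and Liokumovich--Marques--Neves for the closed case, and the relative-cycle adaptations needed here are exactly those already used in the proofs of Theorems \ref{sequencia.critica} and \ref{minmax.theorem}, so the argument is a matter of assembling these pieces rather than proving anything new. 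I would therefore present the proof as: (1) recall $\omega_p(M) = \inf_\Phi \textbf{L}(\Pi(\Phi))$ via the interpolation theorems; (2) pick a near-optimal $\Phi$, form $\Pi$, and invoke Theorem \ref{minmax.theorem}; (3) read off (i)--(iii).
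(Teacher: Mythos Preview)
Your proposal is correct and follows essentially the same approach as the paper: choose a near-optimal $p$-sweepout $\Phi$, discretize it to obtain an $(X,\textbf{M})$-homotopy class $\Pi$ with $\textbf{L}(\Pi)\leq \sup_x\textbf{M}(\Phi(x))$, use the reverse Almgren extension to show $\omega_p(M)\leq \textbf{L}(\Pi)$, and then apply Theorem \ref{minmax.theorem}. The paper cites Th.~3.6, 3.7, Cor.~3.9 and Lemma~3.5 of \cite{Marques-Neves} (extended to the relative setting via Section~2 of \cite{Marques-Neves-Liokumovich}) for the interpolation machinery, which is exactly the content you invoke.
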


\begin{proof}

Note that the results in Section 3.3 of \cite{Marques-Neves} can be extended for compact manifolds (with or without boundary) from the results in Section 2 of  \cite{Marques-Neves-Liokumovich}. So we can use the results from Section 3.3 of \cite{Marques-Neves}.

By definition we can find $\Phi: X \rightarrow \mathcal{Z}_{n, rel}(M, \partial M; \mathbb{Z}_2)$ a $p$-sweepout with no concentration of mass such that $\sup \{\textbf{M}(\Phi(x)): x\in \mbox{dmn}(\Phi)\}\leq \omega_p(M)+\epsilon.$ From Th. 3.6 of \cite{Marques-Neves} there exists an $(X, \textbf{M})$-homotopy sequence of mappings $S=\{\phi_i\}_{i \in \mathbb{N}} \in \Pi$ associated. By Th. 3.7 and Cor. 3.9 (ii) of \cite{Marques-Neves} we can extended this sequence to a sequence $\{\Phi_i\}_{i \in \mathbb{N}}$ of maps continuous in the mass norm and homotopics to $\Phi$ in the flat topology for large $i$. Moreover
$$L(\Pi) \leq L(S)=\lim_{i \rightarrow \infty} \sup \sup \{M(\Phi_i(x)): x \in X\}\leq \sup_{x \in X} M(\Phi(x)).$$

As $\Phi$ is a $p$-sweepout and $\Phi_i$ is flat continuous and homotopic to $\Phi$ for large $i,$ then $\Phi_i$ is also a $p$-sweepout for large $i$ with no concentration of mass by Lemma 3.5 of \cite{Marques-Neves}. Also from Cor. 3.9 (i) of \cite{Marques-Neves} we have that $\{\widetilde{\Phi}_i\}_{i \in \mathbb{N}} \in \mathcal{P}_p (M)$ for each $\widetilde{S}=\{\widetilde{\phi}_i\}_{i \in \mathbb{N}} \in \Pi$ and for large $i,$ where $\widetilde{\Phi}_i$ is the Almgren extension of $\widetilde{\phi}_i.$ Together with the above inequality we conclude that
$$\omega_p(M)\leq L(\Pi)\leq \sup_{x \in X} M(\Phi(x))\leq \omega_p(M)+\epsilon.$$

The remaining items are deduced from the above theorem. 	
\end{proof}

\section{One Dimensional Stationary Varifolds}
\label{sec:3}

In this section we prove some results related to stationary integral 1-varifolds. In particular, we prove some properties of free boundary geodesic networks. When $M$ is the unit disk  $B^2=\overline{B_1(0)}\subset \mathbb{R}^2,$ or a planar full ellipse $E^2 \subset \mathbb{R}^2$ sufficiently close to $B^2,$ we classify the free boundary finite geodesic networks, provided they are $\mathbb{Z}_2$-almost minimizing in annuli and have low mass. Also we prove our main theorem about regularity (Theorem \ref{theo.main.regularity}).

\subsection{Free Boundary Geodesic Networks}

Here we define certain stationary integral 1-varifolds whose support is given by geodesic segments. We follow the notations of  Aiex \cite{Aiex}.

\begin{definition}  \label{definition.geodesic.network}
	Let $U \subset M$ be a relatively open set. A varifold $V \in I\mathcal{V}_1(M)$ is called a \textit{(finite) geodesic network} in $U$ if there exist geodesic segments $\{\alpha_1, \ldots, \alpha_l\} \subset \mbox{int}(M)$ and $\{\theta_1, \ldots, \theta_l\}\subset \mathbb{Z}_{+}$ such that
	\begin{itemize}
    \item[(i)] $\displaystyle V\mres U=\sum_{i=1}^{l}v(\alpha_i \cap U, \theta_i);$
    \item[(ii)] The \textit{set of junctions} is the set $\displaystyle \Sigma_V=\bigcup_{i=1}^{l} (\partial \alpha_i) \cap U.$ Each $p \in \Sigma_V$ belongs to a set $\{\alpha_{i_1}, \ldots, \alpha_{i_m}\}$ for some $m=m(p) \in \mathbb{Z}_+,$ with $m\geq 3$ if $p \in \mbox{int}(M).$ If each of those geodesic segments is \-{parame}\-{te}\-{ri}\-{zed} by arc-length with initial point $p,$ then
    \begin{eqnarray} 
    	\sum_{k=1}^{m} \theta_{i_k} \dot{\alpha}_{i_k}(0)=0, \quad  \mbox{if}  \quad p \in \Sigma_V \cap \mbox{int}(M).\label{cond.geod.net.interior}
         \end{eqnarray}
The varifold $V$ above is called a \textit{(finite) free boundary geodesic network} in $U,$ if additionally holds
       \begin{eqnarray} 
       \sum_{k=1}^{m} \theta_{i_k} \dot{\alpha}_{i_k}(0)\perp  \partial M, \quad  \mbox{if}  \quad p \in \Sigma_V \cap \partial{M}. \label{cond.geod.net.boundary}
     \end{eqnarray}
        \end{itemize}

\end{definition}

A junction $p \in \Sigma_V \cap \mbox{int}(M)$ is said to be \textit{singular} in $\mbox{int}(M)$ if there exist at least two geodesic segments with $\theta_{i_k} \dot{\alpha}_{i_k}(0)\neq -\theta_{i_{k'}} \dot{\alpha}_{i_{k'}}(0),$ and \textit{regular} in $\mbox{int}(M)$ otherwise. In other words, an interior regular junction belong to the interior of each segment that contains it. When $p \in \Sigma_V \cap \partial M,$ we said that it is regular if $\dot{\alpha}_{i_k}(0)\perp \partial M$ for every $\alpha_{i_k}$ such that $p\in \partial\alpha_{i_k}.$ A \textit{triple junction} is a point $p \in \Sigma_V$ such that it belongs to exactly three geodesic segments with multiplicity one each. Obviously a triple junctions is not regular in $\mbox{int}(M).$

We can deduce the following properties as did in  \cite{Aiex}:

\begin{proposition} {\normalfont (Prop. 3.2 and Cor. 3.3 and 3.4 of \cite{Aiex}).} \label{geod.net.properties}
 Let $V$ be a free boundary geodesic network.
\begin{itemize}
  \item[(i)] $V$ is stationary in $U;$
  \item[(ii)] $\displaystyle \Theta^1 (V,x)=\sum_{k=1}^{m} \frac{\theta_{i_k}}{2} $ for $x\in \bigcap\limits_{k=1}^{m}v(\alpha_{i_k}\cap U, \theta_{i_k});$
  \item[(iii)] If $\Theta^1 (V, x)<2$ for all $x \in \mbox{spt} \|V\| \cap \mbox{int}(M),$ then every $p\in \Sigma_V \cap \mbox{int}(M)$ is a triple junction;
  \item[(iv)] If $\Theta^1(V, x)\leq 2$ for all $x \in \mbox{spt} \|V\| \cap \mbox{int}(M),$ then either $\Sigma_V \cap \mbox{int}(M)$ contains a triple junction, or each junction of $\Sigma_V \cap \mbox{int}(M)$  is regular and the geodesic segments that define a such junction have multiplicity one each;
  \item[(v)]  If $\Theta^1(V, x)\leq 1$ for $x \in \mbox{spt} \|V\| \cap \partial M,$ then a junction on $x$ is given by a geodesic segment with multiplicity one or two and orthogonal to $\partial M,$ or by two geodesic segments with multiplicity one each and with the same angles with respect to $\partial M.$
\end{itemize}

\end{proposition}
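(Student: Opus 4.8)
The plan is to derive all five items from the local structure of $V$ near a junction, working with the blow-up cones at each point of $\mathrm{spt}\|V\|$. First I would establish (i): since each $\alpha_i$ is a geodesic, $v(\alpha_i,\theta_i)$ has first variation supported at $\partial\alpha_i$, and the balancing conditions \eqref{cond.geod.net.interior} at interior junctions and \eqref{cond.geod.net.boundary} at boundary junctions say precisely that the boundary contributions cancel (in the tangential direction, in the boundary case); hence $\delta V(X)=0$ for all $X\in\mathfrak{X}_{tan}(M)$ compactly supported in $U$. For (ii), I would compute the density by noting that near $x$ the varifold looks like a union of $m$ half-lines (the tangent directions $\dot\alpha_{i_k}(0)$) with multiplicities $\theta_{i_k}$; by the monotonicity formula and the fact that tangent varifolds of stationary varifolds are cones, $\Theta^1(V,x)$ equals the mass ratio of this cone on the unit ball, which is $\sum_{k=1}^m \theta_{i_k}/2$ (each half-line contributing $\theta_{i_k}$ times $1/2$, the mass ratio of a half-line being half that of a full line). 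A point in the relative interior of a single segment with multiplicity $\theta$ has density $\theta$, consistent with $m=2$, $\theta_{i_1}=\theta_{i_2}=\theta$ in the regular case.

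For (iii), (iv), (v) the argument is combinatorial, exploiting the integrality of the $\theta_{i_k}$ together with the density bound and the balancing equation. For (iii): if $p\in\Sigma_V\cap\mathrm{int}(M)$ then $m\geq 3$ by Definition \ref{definition.geodesic.network}(ii); by (ii), $\Theta^1(V,p)=\tfrac12\sum_{k=1}^m\theta_{i_k}<2$ forces $\sum\theta_{i_k}\leq 3$, and with $m\geq 3$ and $\theta_{i_k}\geq 1$ this gives $m=3$ and $\theta_{i_1}=\theta_{i_2}=\theta_{i_3}=1$, i.e. a triple junction. For (iv): now $\Theta^1(V,p)\leq 2$ gives $\sum\theta_{i_k}\leq 4$ with $m\geq 3$; the cases are $(1,1,1)$, $(1,1,2)$, $(1,1,1,1)$. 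The case $(1,1,1)$ is a triple junction. In the case $(1,1,2)$, the balancing equation $\dot\alpha_{i_1}(0)+\dot\alpha_{i_2}(0)+2\dot\alpha_{i_3}(0)=0$ with all vectors unit forces (taking norms, or observing the only way three unit vectors plus a doubled unit vector sum to zero) $\dot\alpha_{i_1}(0)=\dot\alpha_{i_2}(0)=-\dot\alpha_{i_3}(0)$, which exhibits a triple junction among the configuration (three segments with unit multiplicity through $p$ after splitting the double segment), so again $\Sigma_V\cap\mathrm{int}(M)$ contains a triple junction. In the case $(1,1,1,1)$, if the four unit vectors summing to zero do not split into two antipodal pairs then one can again locate a triple junction (a subfamily summing to zero is impossible for three of them, so the junction is regular only if they pair up); a short angle/convexity argument shows that if no triple junction occurs then the four directions form two antipodal pairs, i.e. the junction is regular with multiplicity-one segments. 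For (v): at $p\in\Sigma_V\cap\partial M$ with $\Theta^1(V,p)\leq 1$, by (ii) $\sum\theta_{i_k}\leq 2$; here the reflection across $\partial M$ (using that $\widetilde{\mathcal{S}}^+$ meets $\partial M$ orthogonally, Proposition \ref{Fermi.Convex.theo}(i), so the doubled varifold $V\cup\iota_*V$ is stationary in the doubled manifold) converts the free-boundary condition \eqref{cond.geod.net.boundary} into an interior balancing condition; the possibilities $\theta_{i_1}=1$ ($m=1$), $\theta_{i_1}=2$ ($m=1$), or $\theta_{i_1}=\theta_{i_2}=1$ ($m=2$) then translate into: one segment of multiplicity one orthogonal to $\partial M$; one segment of multiplicity two orthogonal to $\partial M$; or two segments of multiplicity one with equal angles to $\partial M$ (so that the tangential component of $\dot\alpha_{i_1}(0)+\dot\alpha_{i_2}(0)$ vanishes, which is exactly \eqref{cond.geod.net.boundary}).

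The main obstacle I anticipate is the case analysis in (iv) for the configuration $(1,1,1,1)$: ruling out the possibility that four unit tangent vectors sum to zero without forming two antipodal pairs and without producing a triple junction. This requires showing that in that degenerate situation — four distinct rays from $p$ whose multiplicity-one tangent vectors balance — either two of the rays coincide (so the ``four segments'' really present a triple junction after merging), or the rays split into two straight lines through $p$ (the regular case). This is elementary in $\mathbb{R}^2$ (the tangent cone lives in a $2$-plane since $\dim M=2$), where one argues by ordering the four directions cyclically and using that the balancing forces opposite directions to be parallel, but it is the step where one must be careful about the definition of ``triple junction'' versus ``regular junction'' and about coincident segments. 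Everything else reduces to the monotonicity formula, the cone structure of tangent varifolds, and integer arithmetic, exactly as in Aiex \cite{Aiex}; the boundary case (v) additionally uses the orthogonal-reflection trick justified by Proposition \ref{Fermi.Convex.theo}.
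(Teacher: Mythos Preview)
The paper gives no proof of its own here; it defers entirely to Aiex \cite{Aiex}. Your sketch is along the expected lines, but your treatment of the $(1,1,2)$ case in (iv) is wrong. You correctly deduce from $v_1+v_2+2v_3=0$ with $|v_i|=1$ that $v_1=v_2=-v_3$, but this does \emph{not} ``exhibit a triple junction'': a triple junction is by definition three multiplicity-one segments, and here one segment has multiplicity two; splitting it and pairing the pieces with $\alpha_{i_1},\alpha_{i_2}$ yields a straight line with multiplicity two through $p$, not three rays. The correct observation is that $v_1=v_2$ is a contradiction once one adopts the natural convention (always achievable by merging) that the geodesic segments meeting at a junction have pairwise distinct initial directions --- two segments issuing from $p$ with the same unit tangent parametrize the same geodesic. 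Under that convention the $(1,1,2)$ configuration simply cannot occur, and every density-$2$ interior junction is of type $(1,1,1,1)$.

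Your anticipated ``main obstacle'' in the $(1,1,1,1)$ case dissolves once you use that $\dim M=2$ (the setting both here and in Aiex): four unit vectors in a plane summing to zero \emph{always} form two antipodal pairs. Indeed, writing $v_1+v_2=-(v_3+v_4)$, each pair is symmetric about the line spanned by $v_1+v_2$ and has the same opening angle (since $|v_1+v_2|=|v_3+v_4|$), which forces $\{v_3,v_4\}=\{-v_1,-v_2\}$. So every $(1,1,1,1)$ junction is automatically regular with multiplicity-one segments, and no further case analysis is needed. (In ambient dimension $\geq 3$ this fails --- the four vertex directions of a regular tetrahedron sum to zero without any antipodal pair --- so (iv) as stated does require $\dim M=2$.) Finally, for (v) the reflection/doubling device is unnecessary: condition \eqref{cond.geod.net.boundary} already says that the component of $\sum_k\theta_{i_k}\dot\alpha_{i_k}(0)$ tangent to $\partial M$ vanishes, and with $\sum_k\theta_{i_k}\leq 2$ the three listed possibilities follow by inspection.
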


\subsection{Upper Bound for the Density}

Now we get an upper bound for the density in free boundary (finite) geodesic networks. This is similar  to Prop. 3.6 and Th. 3.7 from \cite{Aiex}, but with a different approach.

Following the notations of the Definition \ref{definition.geodesic.network}, Let $V \in V_1(M^2)$ and suppose that $V \mres U=\sum_{i,j} v(\alpha_{ij} \cap U, \theta_{ij}),$ not necessarily satisfying (\ref{cond.geod.net.interior}) or (\ref{cond.geod.net.boundary}), and here the density $\theta_{ij}$ of each geodesic segment $\alpha_{ij}$ is a positive real number. Supposing that the number of geodesic segments is finite, we call such varifold a \textit{generalized finite geodesic network}. Denote by $J_i$ the $i$-th junction of $V.$ For $M^2 \subset \left({\mathbb{R}}^2, \langle , \rangle \right),$ each segment $\alpha_{ij}$ of $V$ is determined by two junctions $J_i$ and $J_j$ such that $|a_{ij}|=|J_j-J_i.|$ In each $J_i$ we see that
$\dot{\alpha}_{ij}(0)=(J_j-J_i)/|J_j-J_i|.$ Also, in these notations, we have $\alpha_{ij}=\alpha_{ji}, \dot{\alpha}_{ij}(0)=-\dot{\alpha}_{ji}(0)$ and $|\dot{\alpha}_{ij}(0)|=1.$

Let $x \in {\mathbb{R}}^2$ such that
 \begin{eqnarray} \label{radial.stationary}
	\sum_i  \langle \theta_{ij}\dot{\alpha}_{ij}(0), J_j-x \rangle=0, \ \ \forall J_j \in \Sigma_V \cap \mbox{int}(M).
	\end{eqnarray}
	Obviously, if $V$ satisfies (\ref{cond.geod.net.interior}) then it satisfies  (\ref{radial.stationary}) for all $x \in {\mathbb{R}}^2.$

\begin{lemma} \label{upper.bound.Theorem}
	Let $M^{2}$ be a compact region in $\mathbb{R}^2$ with non-empty boundary and $V \in \mathcal{V}_1(M)$ be a generalized finite geodesic network such that it satisfies (\ref{radial.stationary}) for some $x \in {\mathbb{R}}^2.$ At each $J_l \in \Sigma_V \cap \partial M, J_l \neq x,$ such that $\sum_{i} \theta_{il} \dot{\alpha}_{il}(0) \neq 0,$ define 
	$$F_l:=\sum_{i} \theta_{il} \dot{\alpha}_{il}(0) \quad \mbox{and} \quad |F_l^x|:= |F_l| \cos(\phi_l^x), $$
	where $\phi_l^x=\angle(F_l, J_l-x).$ Then $\|V\|(M)=\sum_{l} |F_l^x| |J_l -x|.$ In particular, given $R>0,$ holds

\begin{enumerate}
   \item[(i)] $\displaystyle \|V\|(M)=R\sum_l |F_l^x|$ if $M^{2}=\overline{B_R(x)}.$
  \item[(ii)] $ \displaystyle R\sum_l |F_l^x| \rightarrow \|V\|(M)$ if the convergence $M^{2} \rightarrow \overline{B_R(x)}$ is smooth. Precisely: given $\varepsilon>0, \mathcal{C} >0,$ then for $M^2$ sufficiently $C^{\infty}$-close to $\overline{B_R(x)},$ we have that 
$$   \Big|\displaystyle  \|V\|(M)- R\sum\limits_{l}|F_l^x| \Big|<\varepsilon $$
for every free boundary geodesic network $V \in I\mathcal{V}_1(M)$ with $\|V\|(M)<\mathcal{C}.$

\end{enumerate}

\end{lemma}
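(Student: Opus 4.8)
The plan is to compute the mass of $V$ by a "radial" first-variation / divergence identity applied to the position vector field based at $x$, and then to read off the special cases. Concretely, I would introduce the vector field $X(y) = y - x$ on $\mathbb{R}^2$ and compute $\|V\|(M) = \sum_{i,j}\theta_{ij}\operatorname{length}(\alpha_{ij})$ by summing, over each geodesic segment $\alpha_{ij}$ (which is a straight Euclidean segment from $J_i$ to $J_j$ since $M\subset\mathbb{R}^2$), the elementary identity
\begin{equation*}
\operatorname{length}(\alpha_{ij}) \;=\; \bigl\langle \dot\alpha_{ij}(0),\, (J_j - x) - (J_i - x)\bigr\rangle \cdot \tfrac{1}{|\dot\alpha_{ij}(0)|}
\;=\; \langle \dot\alpha_{ij}(0), J_j - x\rangle + \langle \dot\alpha_{ji}(0), J_i - x\rangle,
\end{equation*}
where I have used $\dot\alpha_{ij}(0) = -\dot\alpha_{ji}(0)$ and $|\dot\alpha_{ij}(0)|=1$. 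Multiplying by $\theta_{ij}=\theta_{ji}$ and summing over all unordered pairs $\{i,j\}$, each junction $J_l$ collects the contribution $\bigl\langle \sum_i \theta_{il}\dot\alpha_{il}(0),\, J_l - x\bigr\rangle$. For interior junctions this vanishes by hypothesis (\ref{radial.stationary}); for $J_l = x$ the factor $J_l - x$ is zero; and for the remaining boundary junctions $J_l\in\Sigma_V\cap\partial M$ with $J_l\neq x$ the contribution is $\langle F_l, J_l - x\rangle = |F_l|\,|J_l-x|\cos(\phi_l^x) = |F_l^x|\,|J_l - x|$. Summing gives $\|V\|(M) = \sum_l |F_l^x|\,|J_l - x|$, which is the main identity.

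For part (i), when $M^2 = \overline{B_R(x)}$ every boundary junction satisfies $|J_l - x| = R$, so $\|V\|(M) = R\sum_l |F_l^x|$ immediately. Here one should also note that a free boundary geodesic network automatically satisfies (\ref{radial.stationary}) at interior junctions (by (\ref{cond.geod.net.interior})), so the identity applies; and the free boundary condition (\ref{cond.geod.net.boundary}) forces $F_l\perp\partial M$, which on $\overline{B_R(x)}$ means $F_l$ is parallel to $J_l - x$, whence $\phi_l^x\in\{0,\pi\}$ and $|F_l^x| = |F_l|$ — though the statement as written does not need this refinement.

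For part (ii), the issue is that when $M^2$ is only $C^\infty$-close to $\overline{B_R(x)}$, the boundary junctions $J_l$ satisfy $|J_l - x|$ close to but not equal to $R$, and $F_l$ is perpendicular to $\partial M$ rather than radial, so $\phi_l^x$ is small but not zero. The strategy is: the exact identity $\|V\|(M) = \sum_l |F_l^x|\,|J_l - x|$ still holds, so
\begin{equation*}
\Bigl|\,\|V\|(M) - R\sum_l |F_l^x|\,\Bigr| \;=\; \Bigl|\sum_l |F_l^x|\bigl(|J_l - x| - R\bigr)\Bigr| \;\le\; \Bigl(\max_l \bigl| |J_l - x| - R\bigr|\Bigr)\sum_l |F_l^x|.
\end{equation*}
The factor $\max_l\bigl||J_l-x|-R\bigr|$ is controlled by the $C^0$-closeness of $\partial M$ to $\partial B_R(x)$. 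The quantity $\sum_l |F_l^x| \le \sum_l |F_l|$ must be bounded independently of $V$; this is where the mass bound $\|V\|(M) < \mathcal{C}$ enters, since $\sum_l |F_l^x| = \frac{1}{R'}\sum_l |F_l^x|\,|J_l-x| \cdot \frac{|J_l-x|}{|J_l-x|}$ and more precisely $\sum_l |F_l^x|\,|J_l-x| = \|V\|(M) < \mathcal{C}$ while $|J_l - x| \ge R/2$ for $M^2$ close enough to $\overline{B_R(x)}$ (using that $x$ is in the interior and $\partial M$ is uniformly close to the sphere of radius $R$), giving $\sum_l |F_l^x| \le 2\mathcal{C}/R$. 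Combining, $\bigl|\,\|V\|(M) - R\sum_l |F_l^x|\,\bigr| \le (2\mathcal{C}/R)\max_l\bigl||J_l-x|-R\bigr| < \varepsilon$ once $M^2$ is close enough. The main obstacle is precisely this uniformity: one must ensure that the number and location of boundary junctions cannot degenerate (e.g. $J_l\to x$) as $M^2 \to \overline{B_R(x)}$, which is exactly what the a priori mass bound $\mathcal{C}$ rules out, together with the fact that $x$ stays a definite distance from $\partial M$.
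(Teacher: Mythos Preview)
Your argument is correct and essentially identical to the paper's: both derive the identity $\|V\|(M)=\sum_l |F_l^x||J_l-x|$ by writing each segment length as $\langle\dot\alpha_{ij}(0),J_j-x\rangle+\langle\dot\alpha_{ji}(0),J_i-x\rangle$, summing, and using (\ref{radial.stationary}) to kill the interior contributions; and both handle (ii) by bounding $\bigl|\|V\|(M)-R\sum_l|F_l^x|\bigr|\le\bigl(\max_l\bigl||J_l-x|-R\bigr|\bigr)\sum_l|F_l^x|$ and controlling $\sum_l|F_l^x|$ via the mass bound $\mathcal{C}$ together with a uniform lower bound on $|J_l-x|$. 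Your explicit constant $2\mathcal{C}/R$ (from $|J_l-x|\ge R/2$) is just a concrete version of the paper's unspecified $\mathcal{C}_1$.
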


\begin{proof}

Following the above notations, consider the index $l$ such that $J_l\in \partial M.$ We have that
\begin{eqnarray*}
	\|V\|(M)=\frac{1}{2}\sum_{i,j} \theta_{ij}|\alpha_{ij}|&=&\frac{1}{2}\sum_{i,j} \theta_{ij}|J_j-J_i|\\
	&=&\frac{1}{2}\sum_{i,j} \theta_{ij}(\langle \dot{\alpha}_{ij}(0), J_j-x \rangle+\langle \dot{\alpha}_{ji}(0), J_i -x\rangle)\\
	&=&\sum_{i,j}\langle  \theta_{ij} \dot{\alpha}_{ij}(0), J_j-x \rangle\\
	&=&\sum_{i,l} \langle \theta_{il}\dot{\alpha}_{il}(0), J_l-x \rangle\\
	&=&\sum_{l} \langle F_l,J_l -x \rangle\\
	&=&\sum_{l}  |F_l^x| |J_l -x|.
\end{eqnarray*}
Where we use (\ref{radial.stationary}) in the step to restrict the sum to junctions on the boundary.

If $M=\overline{B_R(x)},$ then $|J_l-x|=R$ for all $l$ such that $J_l \in \partial M.$ So, 
$$\|V\|(B_R(x))=R \sum_{l} |F_l^x|.$$

For $M$ close to $\overline{B_R(x)},$ we have $|J_l-x| \approx R$ for all $l$ such that $J_l \in \partial M.$ So,
$$\|V\|(M)= \sum_l |F_l^x| (R\pm \varepsilon_l). $$
Where $0<\varepsilon_l<\varepsilon_1$ and $\varepsilon_1 \rightarrow 0$ as $M \rightarrow \overline{B_R(x)}.$ Since $\|V\|(M)<\mathcal{C},$ we see by the above expression that $\sum_l |F_l^x|<\mathcal{C}_1$ for some constant $\mathcal{C}_1>0.$ Therefore, for $\varepsilon_1<\varepsilon/{\mathcal{C}_1},$ we obtain
$$ \Big| \|V\|(M)-R\sum_l |F_l^x| \Big| \leq \varepsilon_1 \sum_l |F_l^x| < \varepsilon_1 \mathcal{C}_1<\varepsilon. \vspace{-0.4cm}$$ 
\end{proof}


\begin{theorem}   \label{bound.density}
Let $V \in I\mathcal{V}_1 (B^2)$ be a free boundary geodesic network. Suppose that $\|V\|(B^2) \leq \mu$ for some positive real number $\mu,$ then 
\begin{enumerate}

\item[(i)] $\displaystyle \Theta^1(V,x) \leq \frac{\mu}{2}$ for all $x \in \mbox{int} (B^2) \cap \mbox{spt}\|V\|.$
\item[(ii)] $\displaystyle \Theta^1(V,x) < \frac{\mu}{2\sqrt{2}}$ for all $x \in \partial B^2 \cap \mbox{spt}\|V\|.$
\end{enumerate}

Futhermore, let $V \in I\mathcal{V}_1 (M^2)$ be a free boundary geodesic network and $0<\delta<\mu$ such that $\|V\|(M^2)\leq \mu-\delta,$ where $M^2$ is a compact region of $\mathbb{R}^2$ with strictly convex boundary. Then we can take $M$ sufficiently close to $B^2,$ whose approximation depends of $\mu$ and $\delta,$ such that the conclusions $(i)$ and $(ii)$ above are still true for $M^2$ in place of $B^2.$ 
\end{theorem}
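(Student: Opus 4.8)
The plan is to funnel everything through the identity of Lemma \ref{upper.bound.Theorem}. Since a free boundary geodesic network satisfies \eqref{cond.geod.net.interior} at every interior junction, it also satisfies \eqref{radial.stationary} for \emph{every} $x\in\mathbf{R}^2$; moreover \eqref{cond.geod.net.boundary} forces, at each boundary junction $J_l$, the vector $F_l=\sum_i\theta_{il}\dot\alpha_{il}(0)$ to be a nonnegative multiple of the outward unit normal, i.e.\ $F_l=|F_l|\,J_l$. Applying Lemma \ref{upper.bound.Theorem}(i) with $x=0$ (so $B^2=\overline{B_1(0)}$, $R=1$) gives the basic identity
$$\|V\|(B^2)=\sum_l|F_l|,\qquad |F_l|=\Big|\sum_i\theta_{il}\dot\alpha_{il}(0)\Big|\le\sum_i\theta_{il}=2\,\Theta^1(V,J_l).$$
All the work is in turning this identity, which only records the boundary data of $V$, into a bound on the density at an arbitrary point.

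For an interior point $x_0\in\mathrm{int}(B^2)\cap\mathrm{spt}\|V\|$, split segments so that $x_0$ is a junction with segments $\alpha_1,\dots,\alpha_m$ issuing from it with multiplicities $\theta_k$ and unit directions $v_k$, so $\Theta^1(V,x_0)=\tfrac12\sum_k\theta_k$ and $\sum_k\theta_k v_k=0$. The geometric mechanism is \emph{propagation}: following $\alpha_k$ as a straight line, at the first junction it hits, the balancing \eqref{cond.geod.net.interior} produces an outgoing edge into essentially the same half-space, so the chord issued from $x_0$ in direction $v_k$ can be continued inside $\mathrm{spt}\|V\|$ until it reaches $\partial B^2$, at distance $t_k=-\langle v_k,x_0\rangle+\sqrt{\langle v_k,x_0\rangle^2+1-|x_0|^2}$; tracking multiplicities and summing, together with the identity above and $\sum_k\theta_k\langle v_k,x_0\rangle=\langle\sum_k\theta_k v_k,x_0\rangle=0$, one gets $\|V\|(B^2)\ge\sum_k\theta_k t_k=\sum_k\theta_k\sqrt{\langle v_k,x_0\rangle^2+1-|x_0|^2}$. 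The elementary estimate $\sqrt{\langle v_k,x_0\rangle^2+1-|x_0|^2}\ge\sqrt{1-|x_0|^2}$ only yields $\Theta^1(V,x_0)\le\mu/(2\sqrt{1-|x_0|^2})$; to reach the sharp $\Theta^1(V,x_0)\le\mu/2$ one must also use the free boundary conditions \eqref{cond.geod.net.boundary} at the boundary endpoints of the propagated chords, which constrain the directions $v_k$ (for the extremal "pure star'' configuration they force every $v_k=\pm x_0/|x_0|$, i.e.\ a diameter through $x_0$, where equality holds with $\mu=2$) and, for non-radial chords, force extra segments at the far ends whose mass compensates the deficit. Reconciling the interior balancing with the boundary orthogonality to get the geometric factor $1$ is the main obstacle; everything else is bookkeeping, including the care needed so that propagated chords that merge are not double-counted.

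For a boundary point $p\in\partial B^2\cap\mathrm{spt}\|V\|$ the argument is analogous. Here $p$ is the strict maximum on $\mathrm{spt}\|V\|$ of the linear function $y\mapsto\langle p,y\rangle$, so every segment at $p$ points strictly into $B^2$; writing $s_k:=-\langle p,v_k\rangle\in(0,1]$, the chord issued from $p$ in direction $v_k$ exits $B^2$ at distance $2s_k$ and, after the same propagation through junctions, the portion of $V$ it generates inside the half-disk $\{\langle p,y\rangle>0\}$ together with the segments \eqref{cond.geod.net.boundary} forces at its endpoint is controlled by the threshold $s=1/\sqrt2$ (the portion of the chord in the half-disk has length $\min(2s_k,1/s_k)$, and $\min(2s,1/s)\le\sqrt2$ with equality only at $s=1/\sqrt2$). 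Feeding this, with the endpoint corrections, back into $\|V\|(B^2)=\sum_l|F_l|$ gives $2\sqrt2\,\Theta^1(V,p)\le\|V\|(B^2)\le\mu$, and the inequality is strict because equality would require each segment at $p$ to run straight to $\partial B^2$, meeting it orthogonally at the point reached at angle $\pi/4$ from $p$ — incompatible with the geometry of the circle and with the extra segments needed for the network to close up.

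Finally, for a compact region $M^2\subset\mathbf{R}^2$ with strictly convex boundary and a free boundary geodesic network $V$ with $\|V\|(M^2)\le\mu-\delta$, I would rerun both arguments with $B^2$ replaced by $M^2$, now invoking Lemma \ref{upper.bound.Theorem}(ii) with constant $\mathcal C=\mu$: given $\varepsilon>0$, for $M$ sufficiently $C^\infty$-close to $B^2$ one has $\big|\,\|V\|(M)-\sum_l|F_l^0|\,\big|<\varepsilon$ for every free boundary geodesic network of mass $<\mu$, and the quantitative constants in the propagation estimates degrade continuously as $M\to B^2$. Since the bound $\mu-\delta$ leaves room $\delta$, choosing $\varepsilon$ small relative to $\delta$ and the $C^\infty$-approximation of $M$ to $B^2$ fine enough — a fineness depending only on $\mu$ and $\delta$ — the conclusions $(i)$ and $(ii)$ persist with $M^2$ in place of $B^2$.
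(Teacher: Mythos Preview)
There is a genuine gap in (i), and you flag it yourself: your propagation scheme yields only $\Theta^1(V,x_0)\le\mu/(2\sqrt{1-|x_0|^2})$, which blows up as $x_0\to\partial B^2$, and you offer no mechanism for how the boundary condition \eqref{cond.geod.net.boundary} upgrades this to $\mu/2$. The sentence ``Reconciling \ldots\ is the main obstacle; everything else is bookkeeping'' is an admission that the heart of the proof is missing. The propagation step itself is also shaky: at a triple junction there is no canonical continuation of a chord, and whatever path you follow inside $\mathrm{spt}\|V\|$ has length generally unrelated to the straight chord-length $t_k$; the promised ``care so that propagated chords that merge are not double-counted'' is never supplied. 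The half-disk heuristic for (ii) is similarly incomplete --- the quantity $\min(2s,1/s)\le\sqrt 2$ is suggestive but is never tied back to $\|V\|(B^2)=\sum_l|F_l|$ in a way that produces the bound.

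The paper's route is different and avoids propagation entirely. For a fixed $x$ it extends $V$ to $\widetilde V\in\mathcal V_1(\mathbf R^2)$ by attaching at each boundary junction $J_l$ the half-line from $J_l$ in the direction $(J_l-x)/|J_l-x|$ with multiplicity $|F_l^x|$; by construction $\widetilde V$ satisfies \eqref{radial.stationary} at every junction, so the monotonicity formula centered at $x$ applies and gives $\Theta^1(V,x)\le\Theta^1_x(\widetilde V,\infty)=\tfrac12\sum_l|F_l^x|$. Since trivially $|F_l^x|=|F_l|\cos\phi_l^x\le|F_l|$, it only remains to note that for $B^2$ the free-boundary condition forces $F_l\parallel J_l$, so $|F_l^0|=|F_l|$ and Lemma~\ref{upper.bound.Theorem}(i) at the origin gives $\sum_l|F_l|=\|V\|(B^2)\le\mu$. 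For (ii) the paper extends analogously at $p=J_k\in\partial B^2$ (now also continuing every segment at $p$ to a full line), obtains $2\Theta^1(V,p)\le\sum_{l\ne k}|F_l^p|$ from monotonicity, and then combines the chord identity $|J_l-p|=2\cos\phi_l^p$ (valid because both endpoints lie on $\partial B^2$) with Cauchy--Schwarz on $\sum_{l\ne k}|F_l^p|^2/|F_l|$ to extract the factor $2\sqrt 2$. This extension-plus-monotonicity device, not any bookkeeping around propagation, is the idea your proposal is missing.
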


\begin{proof}

Consider the case $M^2$ close to $B^2.$ We follow the notations above and we fix $x \in \mbox{spt}\|V\|.$ If $x \in \mbox{int}(M),$ we extend $V$ to a varifold $\widetilde{V}\in \mathcal{V}_1(\mathbb{R}^2)$ adding at each $J_l \in \Sigma_V \cap \partial M$ the semi-straight line $r^x_l$ starting at $J_l$ with direction $\dot{\alpha}^x_{l}(0):=(J_l-x)/|J_l-x|$ and multiplicity $\theta^x_l:=\langle F_l, -\dot{\alpha}^x_{l}(0) \rangle$ (see Fig. \ref{densit.formula} (a)). By the convexity of M, we note that $\langle F_l, -\dot{\alpha}^x_{l}(0) \rangle>0,$ $\forall l.$ If $x \in \partial M,$ then $x=J_k$ for some $J_k \in \Sigma_V \cap \partial M.$ In these case, we extend $V$ to a varifold $\widehat{V}\in \mathcal{V}_1(\mathbb{R}^2)$ adding at each $J_l \in \Sigma_V \cap \partial M, l \neq k,$ the semi-straight line $r^x_l$ with multiplicity $\theta^x_l$ as before, and at $J_k$ we add the  semi-straight lines $r^x_{ki}$ starting at $J_k $ with directions $-\dot{\alpha}_{ki}$ and multiplicities $\theta_{ki},$ respectively (see Fig. \ref{densit.formula} (b)).

\begin{figure}[ht]

\begin{center}

\includegraphics[trim=80 545 185 75,clip,scale=1]{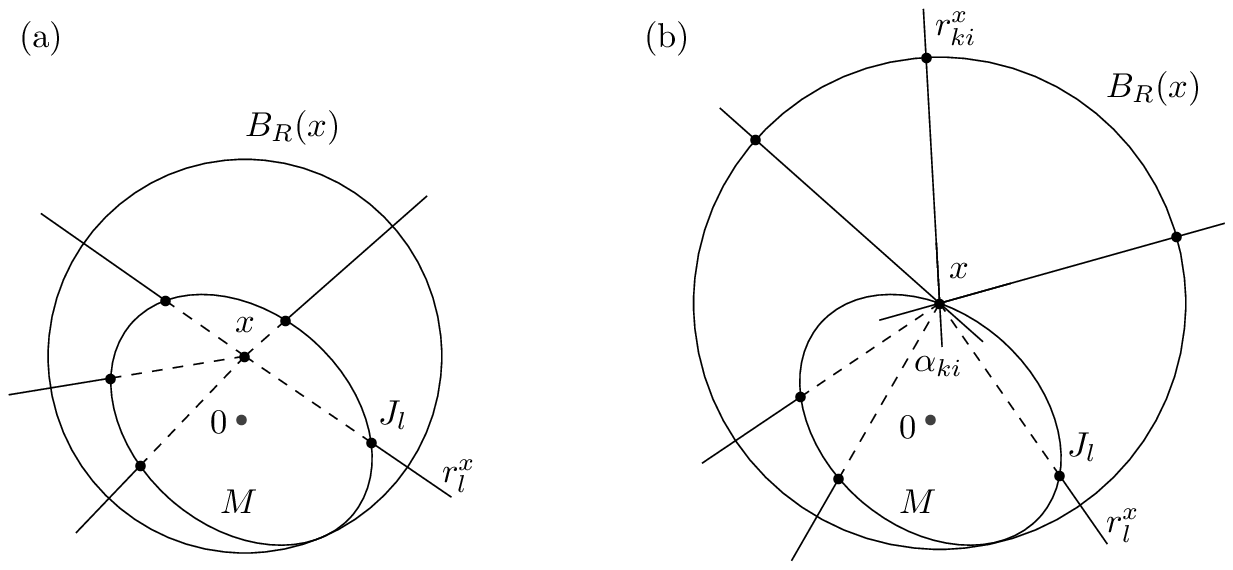}

\caption{}
	\label{densit.formula}
	
\end{center}
\end{figure}

By construction, $\widetilde{V}$ and $\widehat{V}$ satisfy (\ref{radial.stationary}) for this $x.$ In fact, $\widetilde{V}$ and $\widehat{V}$ are stationary in any relatively open set  $U \subset {\mathbb{R}}^2\backslash\{J_l \in \Sigma_V \cap \partial M : J_l\neq x \},$ therefore holds (\ref{radial.stationary}) for all $J_j \in \{ \Sigma_V \cap \mathbb{R}^2 \} \backslash\{J_l \in \Sigma_V \cap \partial M: J_l\neq x \}.$  And for each $J_l \in \Sigma_V \cap \partial M,$ $J_l\neq x,$ we have 
$$\Big \langle \sum_i  \theta_{il} \dot{\alpha}_{il}(0)+\theta^x_l \dot{\alpha}^x_{l}(0), J_l-x \Big \rangle =\langle F_l, J_l-x\rangle-\langle F_l, J_l-x\rangle=0. $$

Take the smallest $R_0>0$ such that $B_R(x)\supset M$ for all $R\geq R_0.$ For $R\geq R_0,$ let $r^x_l(R)=r^x_l \mres B_R (x)$ and $r^x_{ki}(R)=r^x_{ki} \mres B_R (x).$ Note that, $r^x_l(R)$ and $r^x_{ki}(R)$ do not intersect $M$ in $\mathbb{R}^2 \backslash \{J_l\}$ and in $\mathbb{R}^2 \backslash \{J_{k}\},$ respectively, since $M$ has strictly convex boundary.

\

\hspace{-0.47cm}(i) Suppose $x \in \mbox{spt}\|V\|\cap \mbox{int}(M).$ For $R\geq R_0,$ the monotonicity formula \footnote{ We can define $\Theta^1(\widetilde{V},x)$ and $\Theta^1(\widehat{V},x) ,$ since these varifolds have first null variation for radial directions with respect to the point $x.$ Also, $\Theta^1_x (\widetilde{V}, R)$ and $\Theta^1_x (\widehat{V}, R)$ are non-decreasing (see \cite[17]{LeonSimon}).} is given by
$$\Theta^1_x(\widetilde{V},R)=\frac{\| \widetilde{V}(B_R(x)) \|}{2R}=\frac{\|V\|(M)+\sum_l r^x_l(R)|F^x_l|}{2R}.$$
Note that $r^x_l(R)/R \rightarrow 1$ as $R \rightarrow \infty.$ Thus $\Theta^1_x(\widetilde{V},R) \rightarrow \sum_l |F^x_l|/2$ as $R \rightarrow \infty.$ For $\mathcal{C}=\mu-\delta,$ we can apply the above lemma taking $M$ close to $B^2$ such that $\big|\sum_l |F^x_l| - \|V\|(M) \big|<\delta.$ Therefore, for $R$ large
$$\Theta^1(\widetilde{V},x) \leq \Theta^1_x(\widetilde{V},R)<\frac{\|V\|(M)+\delta}{2}\leq\frac{\mu}{2}.$$
Where we used the fact that the function $\Theta^1_x(\widetilde{V},R)$ is non-decreasing for $x$ fixed. As $x \in \mbox{spt}\|V\|\cap \mbox{int}(M),$ we have that $\Theta^1(V,x)=\Theta^1(\widetilde{V},x)<\mu/2.$

For the case $M^2=B^2,$ we know that $\sum_l |F^x_l| = \|V\|(B^2)$ and taking $\mathcal{C}=\mu$ above, we get $\Theta^1(V,x)\leq\mu/2.$

\

\hspace{-0.47cm}(ii) Suppose now that $x \in \mbox{spt}\|V\|\cap \partial M.$ In this case the monotonicity formula is given by
$$\Theta^1_x(\widehat{V},R)=\frac{\| \widehat{V}(B_R(x)) \|}{2R}=\frac{\|V\|(M)+\sum_{l \neq k} r^x_l(R)|F^x_l|+\sum_i r^x_{ki}(R)|F^x_{ki}|}{2R}.$$
Where $|F^x_{ki}|=\theta_{ki}.$ Note that $ r^x_{ki}(R)=R.$ As above, we taking $R \rightarrow \infty$ and we get
$$\Theta^1(\widehat{V},x) \leq \frac{\sum_{l \neq k} |F^x_l|+\sum_i |F^x_{ki}|}{2}.$$
As $\sum_i |F^x_{ki}|=\Theta^1(\widehat{V},x)=2\Theta^1(V,x),$ we see that
 \begin{equation}  \label{resultantes.densidade}
 	\sum_{l\neq k } |F^x_l|  \geq 2\Theta^1(V,x).
 \end{equation}

By the above lemma $\|V\|(M)=\sum_{l\neq k } |F^x_l| |J_l-x|.$ In the case $M=B^2,$ note that $|J_l-x|=2 \cos(\phi^x_l),$ since $x \in \partial M.$ For $M$ close to $B^2$ we get that $|J_l-x|=2 \cos(\phi^x_l) \pm \delta_l,$ for all $x \in \partial M,$ where $0 \leq \delta_l \leq \delta_0$ and $\delta_0 \rightarrow 0$ as $M \rightarrow B^2.$ This follows from the compactness of $M$ and the fact that the convergence $M \rightarrow B^2$ is smooth. Therefore, for $M$ close to $B^2,$ 
\begin{eqnarray*}
	\|V\|(M)=\sum_{l\neq k } |F^x_l| \big(2 \cos(\phi^x_l) \pm \delta_l \big) &=& \sum_{l\neq k } \left( 2\frac{|F^x_l|^2}{|F_l|}\pm \delta_l |F^x_l| \right) \\
	& \geq & 2 \frac{\left( \sum_{l\neq k} |F^x_l| \right)^2}{\sum_{l\neq k }|F_l|}\pm \delta_l \sum_{l\neq k}|F^x_l|.
\end{eqnarray*}
Where the inequality follows from the Cauchy-Schwarz inequality. As $\|V\|$ is bounded, $\sum_{l\neq k} |F^x_l|$ is also bounded by the above lemma. Taking $|F^0_l|=\cos(\phi^0_l ) |F_l|,$ we note that $\cos(\phi^0_l)\approx 1$ for $M \approx B^2=\overline{B_1(0)}.$ By the above lemma, we can take $M$ close to $B^2=\overline{B_1(0)}$ such that $\big| \sum_{l} |F_l^0| - \|V\|(M) \big|<\delta.$ Therefore, using (\ref{resultantes.densidade}) in the last inequality,
$$2\big(2\Theta^1(V,x)\big)^2 < \|V\|(M)\big(\|V\|(M)+\delta\big) \pm \overline{\delta}.$$
Where $ \overline{\delta}\geq 0$ is such that $ \overline{\delta} \rightarrow 0$ as $M \rightarrow B^2.$ Since $\|V\|(M)\leq \mu-\delta,$ we can take $M$ close to $B^2,$ depending of $\mu$ and $\delta,$ such that
$$\Theta^1(V,x)< \frac{\mu}{2 \sqrt{2}}.$$

For the case $M^2=B^2,$ note that $\sum_l |F_l|=\sum_l |F^0_l|=\|V\|(B^2)$ and $\delta_0, \overline{\delta}=0$ in the above expressions.
\end{proof}

Note that the inequality (b) above is not sharp, since in the above proof we use that $\sum_{l\neq k} |F_l|< \sum_{l} |F_l|.$ The sharp inequality seems to be $\Theta^1(V,x)\leq \mu/4$ for $x \in \partial M \cap \mbox{spt}\|V\|.$ In fact, we can prove this for $\mu < 6$ assuming that $V$ is $\mathbb{Z}_2$-almost minimizing in small annuli with free \-{boun}\-{da}\-{ry}. We do not know any counterexample and we not discuss about this result in this article.

\subsection{Free Boundary Geodesic Networks with Low Mass}

In the following, we describe the free boundary geodesic networks with low mass and $\mathbb{Z}_2$-almost minimizing in annuli on the unit ball $B^2,$ and on full ellipses $E^2$ sufficiently close to $B^2.$ We need the following theorem:

\begin{theorem} \label{density.integer}
	{\normalfont(\cite{Aiex}, Th. 4.13)} Given $V \in I\mathcal{V}_1 (M)$ a geodesic network with free boundary and $p \in \Sigma_V \cap \mbox{int}(M)$. If $V$ is $\mathbb{Z}_2$-almost minimizing in annuli with free boundary at $p$, then
	$$\Theta^1(V,p)\in \mathbb{N}.$$
\end{theorem}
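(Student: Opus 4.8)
The goal is to show that $2\,\Theta^{1}(V,p)=\sum_{k}\theta_{k}$ is even, where $\theta_{1},\dots,\theta_{m}$ are the multiplicities of the geodesic segments $\alpha_{1},\dots,\alpha_{m}$ of $V$ with $p\in\partial\alpha_{k}$; by Proposition \ref{geod.net.properties}(ii) one has $\Theta^{1}(V,p)=\tfrac12\sum_{k}\theta_{k}$, so this is exactly the assertion. Equivalently, writing $q=\#\{k:\theta_{k}\text{ odd}\}$, we must show $q$ is even. Assume $q$ is odd; we seek a contradiction. The idea is that $\mathbb{Z}_2$-almost minimality around the \emph{interior} point $p$ forces $V$ near $p$ to be modeled on a relative mod $2$ cycle, and a relative cycle cannot cross a small geodesic circle about an interior point an odd number of times, counted mod $2$.

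First I would blow up at $p$. Any $C\in\mbox{VarTan}(V,p)$ is, since $V$ is a geodesic network, the stationary integral $1$-cone $C=\sum_{k}\theta_{k}\,v(\ell_{k},1)$ over the unit vectors $\dot\alpha_{k}(0)$, balanced by (\ref{cond.geod.net.interior}), with $\Theta^{1}(C,0)=\Theta^{1}(V,p)$. Using that $\mathbb{Z}_2$-almost minimality in $A_{s,r}(p)$ is invariant under the dilations $x\mapsto(x-p)/\lambda$, which carry $A_{\lambda s,\lambda r}(p)$ to $A_{s,r}(0)$, and that in the limit $\lambda\to 0$ the error parameter $\epsilon$ is absorbed, one deduces --- by the discretization and interpolation lemmas underlying the interior regularity theory of Pitts \cite{Pitts} and Li--Zhou \cite{Li-Zhou}, and crucially using that the relative cycle $T$ appearing in the definition may be chosen to serve all inner radii $0<s<r$ simultaneously --- that $C$ is $\mathbb{Z}_2$-almost minimizing, hence $\mathbb{Z}_2$-minimizing, in every Euclidean annulus $A_{s,t}(0)\subset\mathbb{R}^{2}$.

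Next I would use this minimality. By definition, for every $\epsilon>0$ there is a relative cycle $T$ with $\textbf{F}(|T|,C)<\epsilon$ that is area-minimizing among competitors in each annulus $A_{s,t}(0)$; passing to the replacement of $T$ there, we may assume that $T\mres A_{s,t}(0)$ is a finite union of minimizing geodesic segments, i.e.\ straight chords, which become radial to leading order as $s\to 0$. Since $0$ is interior and $T$ is a relative cycle, for a.e.\ $\rho\in(s,t)$ the slice $\langle T,\mbox{dist}_{0},\rho\rangle$ equals $\partial\!\left(T\mres B_{\rho}(0)\right)$ mod $2$, hence is a $\mathbb{Z}_2$ $0$-boundary and therefore consists of an \emph{even} number of points (a mod $2$ $0$-chain is a boundary if and only if it has an even number of points). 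Letting $s\to 0$, this number agrees mod $2$ with the number of chords crossing $\partial B_{\rho}(0)$ counted with their integer multiplicity, namely $\|T\|(B_{\rho}(0))/\rho+o(1)$; and as $\epsilon\to 0$ we have $\|T\|(B_{\rho}(0))\to\|C\|(B_{\rho}(0))=\rho\sum_{k}\theta_{k}$. Therefore $\sum_{k}\theta_{k}\equiv 0\pmod 2$, contradicting that $q=\sum_{k}\theta_{k}\bmod 2$ is odd. (Equivalently: an annular minimizing relative cycle approximating $C$ carries no interior boundary at $0$, so the mod $2$ flat chain underlying $C\mres B_{t}(0)$ must be a cycle relative to $\partial B_{t}(0)$, which for the cone $C$ forces $q\equiv 0\pmod 2$.) Hence $2\,\Theta^{1}(V,p)=\sum_{k}\theta_{k}$ is even and $\Theta^{1}(V,p)\in\mathbb{N}$.

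The main obstacle is the middle step: checking that $\mathbb{Z}_2$-almost minimality genuinely descends to the tangent cone --- ``almost minimizing'' is not manifestly stable under weak limits, and making it so requires the discretization/interpolation machinery of \cite{Pitts} and \cite{Li-Zhou} together with the uniform choice of $T$ over all concentric annuli --- and the companion point that the annular replacement really is a union of minimizing geodesics, which is what rules out spurious tangential mass near $p$ and lets the parity of the slice be read off from the density. Once the tangent cone is recognized as a union of lines through the origin, its rays occur in antipodal pairs of equal multiplicity, $\sum_{k}\theta_{k}$ is even, and the proof is complete.
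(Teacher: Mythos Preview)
The paper does not prove this statement; it is quoted verbatim as Theorem 4.13 of \cite{Aiex} and used as a black box (notably in the proof of Lemma \ref{lemma.cone}). So there is no proof in the present paper against which to compare your attempt.

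On your proposal itself: the slice--parity heuristic is the right intuition, but the execution has genuine gaps.

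First, the blow--up is unnecessary. Since $V$ is already a geodesic network, in a small ball around the interior point $p$ the varifold $V$ \emph{is} its tangent cone (up to the exponential map), and one can argue directly. This sidesteps your admitted ``main obstacle'' (passing $\mathbb{Z}_2$-almost minimality to the tangent cone), as well as the unsupported leap ``$\mathbb{Z}_2$-almost minimizing, hence $\mathbb{Z}_2$-minimizing''.

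Second, and more seriously, the bridge from the even slice count of $T$ to the parity of $\sum_k\theta_k$ is not made. Your route is: pass to a replacement so that $T\mres A_{s,t}(0)$ becomes a finite union of radial chords, then count chords via $\|T\|(B_\rho)/\rho$. But the replacement of Theorem \ref{replacement.theorem} acts on the \emph{varifold} and produces new chains $T_i\in\mathcal{Z}_{1,rel}(M,(M\setminus U)\cup\partial M;\mathbb{Z}_2)$, which are allowed to carry boundary on the inner sphere of the annulus; for these $T_i$ the slice at $\rho$ is no longer a mod $2$ boundary, so the parity conclusion fails. Conversely, the original $T$ from the almost-minimizing definition (for which slices \emph{are} boundaries, since $p\in\mbox{int}(M)$) need not be radial, and the quantity $\|T\|(B_\rho)/\rho$ does not count its slice points. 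Neither chain does both jobs at once, and a mass estimate yields only approximate equality of integers, not a parity statement.

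Finally, your closing sentence --- ``once the tangent cone is recognized as a union of lines through the origin, its rays occur in antipodal pairs'' --- is exactly the content of Lemma \ref{lemma.cone}, whose proof in this paper \emph{uses} the present theorem. Invoking it here is circular.
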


For $k\geq 3,$ let $P_k$ be a regular $k$-sided polygon inscribed in the unit circle. We consider $P_2$ as a diameter of the unit ball $B^2.$ Note that two regular $k$-sided polygons $P_k$ and $\widetilde{P}_k$ inscribed in the unit circle  are distinguished by a rotation. More generally, a $k$\textit{-polygon inscribed in a domain} $\Omega$ is a $k$-periodic billiard trajectory in  $\Omega,$ which is a periodic (billiard) trajectory obtained by $k$ reflexions at points of $\partial \Omega$ (the angle of incidence equals the angle of reflection).

\begin{theorem} \label{classif.geod.net.disc}
	Let $V \in I\mathcal{V}(B^2)$ be a free boundary geodesic network and $\mathbb{Z}_2$-almost \-{mi}\-{ni}\-{mi}\-{zing} in annuli with free boundary in $B^2.$ If $0<\|V\|(B^2)< 3 \sqrt{2},$ then $V=P_2$ or $V=P_2+\widetilde{P}_2.$
\end{theorem}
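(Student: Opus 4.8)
The plan is to squeeze $V$ between the density bounds of Theorem~\ref{bound.density}, the integrality statement of Theorem~\ref{density.integer}, and the local junction classification of Proposition~\ref{geod.net.properties}, until the only possibilities left are unions of diameters, and then to count diameters via the mass bound.

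First I would set $\mu=3\sqrt2$ and record the two density estimates of Theorem~\ref{bound.density}: $\Theta^1(V,x)\le\mu/2=3/\sqrt2<5/2$ for $x\in\mathrm{int}(B^2)\cap\mathrm{spt}\|V\|$, and $\Theta^1(V,x)<\mu/(2\sqrt2)=3/2$ for $x\in\partial B^2\cap\mathrm{spt}\|V\|$. Since the multiplicities are positive integers, at an interior non-junction point the density is $\theta_i/2\le 2$; at an interior junction it equals $\frac12\sum_k\theta_{i_k}$ by Proposition~\ref{geod.net.properties}(ii), which by Theorem~\ref{density.integer} (applicable since $V$ is $\mathbb{Z}_2$-almost minimizing in annuli at every interior junction) is an integer $<5/2$, hence $\le 2$. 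So $\Theta^1(V,\cdot)\le 2$ at every interior point of $\mathrm{spt}\|V\|$, and at every boundary junction the density is a positive half-integer $<3/2$, hence $\le 1$. In particular a triple junction, which would have interior density $3/2\notin\mathbb{N}$, cannot occur; by Proposition~\ref{geod.net.properties}(iv) every interior junction is therefore regular with all incident segments of multiplicity one, so its incident segments pair up into opposite collinear pairs, and, having density $\le 2$ and at least $3$ (hence at least $4$) incident segments, it is exactly a transverse crossing of two straight chords. Consequently $V$ has multiplicity one along every segment meeting an interior junction, and $\mathrm{spt}\|V\|\cap\mathrm{int}(B^2)$ is a finite union of maximal straight chords of $B^2$ crossing one another transversally, each chord terminating on $\partial B^2$.

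Next I would analyze the boundary. By Proposition~\ref{geod.net.properties}(v) together with $\Theta^1\le 1$ on $\partial B^2$, each point of $\Sigma_V\cap\partial B^2$ is either \textbf{(a)} a single chord meeting $\partial B^2$ orthogonally, of multiplicity $1$ or $2$, or \textbf{(b)} a pair of multiplicity-one chords making equal angles with $\partial B^2$; in case (b) the free boundary condition forces the two chords to be symmetric about the inward normal, i.e.\ $p$ is a billiard reflection. I would rule out (b): such a reflection has incidence angle $\neq\pi/2$ (otherwise the two chords coincide), so a chord ending in a type-(b) reflection cannot also terminate orthogonally, and following the reflected chord (possibly through interior crossings, which do not affect the reflection data) one meets another type-(b) reflection, and so on; by finiteness of the segments the chain closes into a closed billiard polygon in $B^2$ with $k\ge 3$ sides, whose perimeter $2k\sin(\pi m/k)$ is at least $6\sin(\pi/3)=3\sqrt3$, so $\|V\|(B^2)\ge 3\sqrt3>3\sqrt2$, a contradiction. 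Hence only type (a) occurs, every maximal chord meets $\partial B^2$ orthogonally at both endpoints, so it passes through the center of $B^2$ and is a diameter.

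Finally I would conclude. A multiplicity-two diameter $\upsilon(P_2,2)$ is not $\mathbb{Z}_2$-almost minimizing in annuli: any mod-$2$ cycle $\textbf{F}$-close to it consists of two nearly parallel chords, which can be pushed together inside a small annulus about an interior point so as to cancel mod $2$, lowering the mass and violating almost minimality. Hence every diameter in $V$ has multiplicity one; distinct diameters all cross at the center, where $d\ge 3$ of them would give density $\ge 3>\mu/2$ (and mass $\ge 6>\mu$), which is impossible, so with $2d=\|V\|(B^2)\in(0,3\sqrt2)$ one has $d\in\{1,2\}$, giving $V=P_2$ or $V=P_2+\widetilde P_2$ with $\widetilde P_2\neq P_2$ a second diameter. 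The steps I expect to be most delicate are the exclusion of type-(b) boundary junctions — one must be sure the reflected trajectory really cannot close in fewer than three bounces nor terminate orthogonally, so that the billiard-polygon length estimate $\ge 3\sqrt3$ genuinely applies — and the exclusion of higher multiplicity, where the almost-minimizing hypothesis, rather than the geodesic-network structure or the mass bound alone, does the work.
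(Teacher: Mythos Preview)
Your argument follows the same route as the paper's: bound the densities via Theorem~\ref{bound.density}, force integrality at interior junctions via Theorem~\ref{density.integer}, read off the junction structure from Proposition~\ref{geod.net.properties}, exclude billiard reflections by a perimeter bound, and finish by counting diameters. Your billiard step is in fact tidier than the paper's: in the unit disk every closed billiard trajectory is a regular star $(k,m)$-polygon with perimeter $2k\sin(m\pi/k)\ge 6\sin(\pi/3)=3\sqrt3>3\sqrt2$, whereas the paper argues by a case split on the radius of the caustic circle $C_k$.

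The one genuine problem is your last paragraph. First, it is unnecessary: the paper's conclusion $V=P_2+\widetilde P_2$ is meant to include $\widetilde P_2=P_2$, and its proof explicitly records that an orthogonal boundary segment may carry multiplicity one \emph{or two}; the double diameter $\upsilon(P_2,2)$ is therefore one of the admissible outcomes, consistent with $\omega_3(B^2)=\omega_4(B^2)=4$. Second, the argument you give does not work. If $T$ is a pair of nearby chords with $\mathbf F(|T|,\upsilon(P_2,2))<\epsilon$, then inside an annulus $\mathcal A_{s,r}(p)$ the constraint $\mathrm{spt}(T-T_i)\subset\mathcal A_{s,r}(p)$ pins the trace of $T$ on both boundary circles, so the two strands cannot simply be ``pushed together and cancelled mod $2$''. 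Any surgery that joins them must first insert a bridge whose length is comparable to the separation $2\eta$ of the chords, raising the mass by about $4\eta$ before anything is gained; since in the almost-minimizing definition $\delta$ and $T$ are chosen \emph{after} $\epsilon$, one may take $\delta<\eta$, and your proposed deformation is then inadmissible. In short, the exclusion of multiplicity two is neither proved nor needed; delete that step and your proof is complete.
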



\begin{proof}
	From Theorem \ref{bound.density} we know that $\Theta^1(V, x)< 3 \sqrt{2}/2$ for $x \in \mbox{int}(B^2),$ and $\Theta^1(V, x) < 1.5$ for $x \in \partial B^2.$ Now using  Proposition \ref{geod.net.properties} (ii) and Theorem \ref{density.integer}, we deduce that $\Theta^1(V, x)=1$ or 2 for $x \in \mbox{int}(B^2)$, and $\Theta^1(V, x)=0.5,$ or $1$ for $x \in \partial B^2.$ Therefore, Proposition \ref{geod.net.properties} (iv) says that all junctions of $V$ in $\mbox{int}(B^2)$ are regular and the geodesic segments from each junction have  multiplicity one. Also, Proposition \ref{geod.net.properties} (iv) and (v) say that each segment of $V$ has multiplicity one or two and touches $\partial B^2$ orthogonally, or has multiplicity one and touches $\partial B^2$ making a reflexion and generating another segment with multiplicity one also. As $\|V\|(B^2)<3 \sqrt{2},$ we note that $V$ touches $\partial B^2$ orthogonally at some point, and we have that $V$ is a diameter $(V=P_2)$ or two diameters $(V=P_2+\widetilde{P}_2)$ of $B^2.$ Indeed, if $V$ does a reflexion at some point of $\partial B^2,$ then it contains a closed $k$-polygon inscribed in $B^2.$ A closed $k$-polygon in $B^2$ has all the sides with the same length and it is tangent to some circle $C_k$ concentric with $\partial B^2$ (see Fig. \ref{circle.2} (a), (b) and (c)), then the perimeter is at least $|C_k|.$ Each polygon $P_k$ gives a unique turn around $C_k.$  From five reflexions, we can have non-convex closed polygons as in the Fig. \ref{circle.2} (b). If $P_3 \subset V,$ then we have that $\|V\|(B^2)>3 \sqrt{2},$ and if $P_k \subset V,$ for $ k\geq 4,$ we observe that: if the radius of $C_k$ is bigger than 0.7, then the perimeter of a closed $k$-polygon is bigger than $2\cdot 0.7\pi>3 \sqrt{2}.$ Otherwise, if the radius of $C_k$ is less or equal to 0.7 (see Fig. \ref{circle.2} (d)), then each side of the closed $k$-polygon is bigger than 1.4, and so the perimeter is bigger than $4 \cdot 1.4>3 \sqrt{2}.$ Therefore, $V$ does not contain a closed $k$-polygon. 			
\end{proof}


\begin{figure}[ht]

\begin{center}

\includegraphics[trim=75 625 200 75,clip,scale=1]{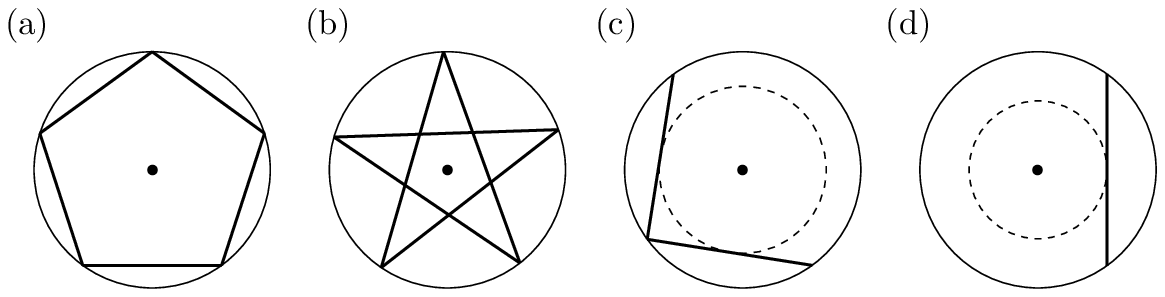}

\caption{}
	\label{circle.2}
	
\end{center}
\end{figure}

Let $E^2$ be a planar full ellipse $E^2.$ We denote by $P^E_k,$ for $k\geq 3,$ the closed convex $k$-polygon (not necessary regular) inscribed in $E^2.$ Here we consider $P^E_2$ as the smallest or the largest diameter of $E^2.$ As we see below, the smallest and the largest diameter of $E^2$ are the only $2$-polygons inscribed in $E^2.$

 The polygons $P^E_k$ are examples of closed periodic billiard trajectories in ellipses (Poncelet polygons). We see more properties of these polygons in the proof below. For instancy, given a point $A \in \partial E^2$ and $k\geq 3,$ there exists a unique $P^E_k$ that passes through $A$ (see the proof below). So, for a fixed integer $l \geq 3 ,$ is not difficult to see that if $E^2$ is close to $B^2,$ then a $k$-polygon $P^E_k$ is close to some $P_k,$ for $3\leq k \leq l.$ In fact, the boundary of $E^2$ is given by an ellipse $x^2/a^2+y^2/b^2=1$ and a point $A_0 \in \partial E^2$ is given in polar coordinates by $A_0=(a \cos(t_0), b\sin(t_0)),$ for some $t_0 \in [0,2 \pi).$ For $t_0$ fixed and $E^2 \rightarrow B^2,$ we take the $k$-polygons $P^E_k$  that pass through $A_0,$ defined by the points $A_0, A_1, \cdots, A_{k-1} \in \partial E^2,$ and such that $|P^E_k|= |\overline{A_0A_1}| + \cdots +|\overline{A_{k-2} A_{k-1}}|+|\overline{A_{k-1} A_{0}}|.$ Let $O=(0,0)\in \mathbb{R}^2,$ the segments $\overline{OA_i}$ tend to be perpendicular to $\partial E^2,$ as $a, b \rightarrow 1.$  Since at $A_i$ the angle of incidence is equals the angle of reflection, we see \footnote{For $i=0,$ we consider $A_{i-1}=A_{k-1},$ and for $i=k-1$ we consider $A_{i+1}=A_{0}.$} that $\angle(OA_i A_{i-1}) \approx \angle(OA_i A_{i+1}).$ Moreover,  $|\overline{A_jO}| \approx 1$ for all $j,$ so  $|\angle(A_iOA_{i-1}) - \angle(A_iOA_{i+1})|<\varepsilon,$ for small $\varepsilon > 0$ such that $\varepsilon \rightarrow 0$ as $E^2 \rightarrow B^2.$ In particular, $|2 \pi / k -\angle(A_j O A_{j+1})| =|\sum_{i=0}^{k-1} \angle(A_i O A_{i+1}) /k - \angle(A_j O A_{j+1})| < k \varepsilon$ for all $j.$ Finally, as $\partial E^2 \rightarrow \partial B^2,$ we conclude that these $k$-polygons $P^E_k$ tend to the regular $k$-polygon $P_k$ that passes through the point $(\cos(t_0), \sin(t_0)) \in \partial B^2.$

\begin{theorem} \label{classif.geod.net.ellip}
Let $E^2$ be a planar full ellipse and $0<R<3 \sqrt{2}$ be a real number. For $E^2$ sufficiently close to $B^2,$ depending only on the parameter $R,$ the following is true: if $V \in I\mathcal{V}_1 (E^2)$ is a free boundary geodesic network such that it is $\mathbb{Z}_2$-almost minimizing in annuli with free boundary in $E^2$ and $0<\|V\|(E^2)< R,$ then $V=P^E_2$ or $V=P^E_2+\widetilde{P}^E_2.$
\end{theorem}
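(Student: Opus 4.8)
The plan is to mimic the proof of Theorem \ref{classif.geod.net.disc} and promote it to nearby ellipses by a continuity/compactness argument, using that the density bounds of Theorem \ref{bound.density} are stable under $C^\infty$-small perturbations of the domain. First I would apply the second (``furthermore'') part of Theorem \ref{bound.density} with $\mu = R$ and a fixed $\delta \in (0,R)$: this gives an $E^2$ close enough to $B^2$ (the closeness depending only on $R$, after fixing e.g. $\delta = \min\{1, R/2\}$) so that any free boundary geodesic network $V$ with $\|V\|(E^2) < R - \delta$ satisfies $\Theta^1(V,x) < R/2 \le 3\sqrt{2}/2$ in $\mathrm{int}(E^2)$ and $\Theta^1(V,x) < R/(2\sqrt 2) \le 1.5$ on $\partial E^2$. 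Combined with Proposition \ref{geod.net.properties}(ii) and Theorem \ref{density.integer} (integrality of interior densities for $\mathbb{Z}_2$-almost minimizing networks), the densities are forced to be $1$ or $2$ in the interior and $1/2$ or $1$ on the boundary, exactly as in the disk case. Then Proposition \ref{geod.net.properties}(iv) and (v) give the same structural dichotomy: interior junctions are regular with multiplicity-one segments, and each boundary contact is either an orthogonal segment of multiplicity $\le 2$ or a genuine reflection producing another multiplicity-one segment. So $V$ is either a union of diameters $P^E_2$ (as the only $2$-polygons, by the discussion preceding the theorem) or it contains a closed billiard polygon $P^E_k$, $k \ge 3$, inscribed in $E^2$.

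The remaining work is to rule out, for $E^2$ sufficiently close to $B^2$, that a closed $k$-polygon $P^E_k$ with $k\ge 3$ can occur inside a network of mass less than $R < 3\sqrt2$. Here I would use the continuity statement established in the paragraph before the theorem: for fixed $l$ and $E^2 \to B^2$, each inscribed $k$-polygon $P^E_k$ with $3 \le k \le l$ converges (uniformly, after matching a contact point) to the regular $P_k$, hence $|P^E_k| \to |P_k|$. Since in the disk $|P_3| = 3\sqrt 3 > 3\sqrt2$ and $|P_k| > 3\sqrt2$ for all $k\ge 3$ (the argument in Theorem \ref{classif.geod.net.disc}: for $k\ge 4$ either the inscribed circle radius exceeds $0.7$, giving perimeter $> 2\cdot 0.7\pi$, or it is $\le 0.7$, giving side length $> 1.4$ and perimeter $> 4\cdot 1.4$), there is a uniform lower bound $|P_k| \ge 2\cdot 0.7\pi > R$ independent of $k$. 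The mass constraint $\|V\|(E^2) < R$ together with a mass bound $\|V\|(E^2) < 3\sqrt 2$ bounds the total number of segments of $V$, hence the number $k$ of reflections in any contained closed polygon is bounded by some $l = l(R)$; applying the continuity only for this finite range $3 \le k \le l$, we may shrink the allowed neighborhood of $B^2$ so that $|P^E_k| > R$ for all such $k$, a contradiction. Therefore $V$ contains no closed polygon, so $V = P^E_2$ or $V = P^E_2 + \widetilde P^E_2$.

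I expect the main obstacle to be making the last step fully rigorous and uniform in $k$: one must first get an a priori bound on the number $k$ of reflections from the mass bound (so that only finitely many polygon types are in play and the continuity $P^E_k \to P_k$ can be invoked on a fixed finite range), and one must check that the lower perimeter bound $|P_k| > R$ in the disk genuinely holds uniformly over all $k \ge 3$ — this is where the case split on the radius of the inscribed circle $C_k$ of Theorem \ref{classif.geod.net.disc} is essential, and it should be quoted rather than reproved. A minor additional point is to confirm that the only $2$-polygons inscribed in an ellipse are its two axes (equivalently, a chord meeting $\partial E^2$ orthogonally at both endpoints must be an axis), which follows from the fact that a doubly-orthogonal chord is a critical point of the length functional on chords and the ellipse has exactly these two; this justifies identifying the density-$\le 1$ orthogonal-segment case with $P^E_2$.
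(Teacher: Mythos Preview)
Your overall strategy (density bounds $\Rightarrow$ structural dichotomy $\Rightarrow$ rule out polygons) matches the paper's, but there are two genuine gaps.

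\textbf{The main gap: the ``mixed'' case of an open billiard arc with orthogonal ends.} From Proposition~\ref{geod.net.properties}(iv)--(v) you correctly deduce that every boundary junction is either an orthogonal hit (multiplicity $\le 2$) or a reflection. But your dichotomy ``$V$ is a union of diameters $P^E_2$ \emph{or} $V$ contains a closed billiard polygon'' does not follow from this. Unlike in the disk, a chord of an ellipse orthogonal to $\partial E^2$ at one endpoint $A$ is in general \emph{not} orthogonal at the other endpoint $B$; it may reflect there to a second segment $\overline{BC}$, which may in turn terminate orthogonally at $C$ or reflect again. Such an open arc $A\!\to\! B\!\to\! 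C\!\to\!\cdots$ is neither an axis $P^E_2$ nor a closed inscribed polygon, and your argument gives no reason to exclude it. The paper spends most of its proof on exactly this point: using the explicit equation of the normal line it shows that if $\overline{AB}\perp\partial E^2$ at $A$ (with $A$ not an axis vertex) then $\overline{AB}$ is \emph{not} orthogonal at $B$, and the reflected segment $\overline{BC}$ is \emph{not} orthogonal at $C$ either; since the normal at $A$ bisects the focal angle, $\overline{AB}$ crosses $\overline{F_1F_2}$, and by the billiard caustic property so do $\overline{BC}$ and $\overline{CD}$. For $E^2$ close to $B^2$ every chord through $\overline{F_1F_2}$ has length at least $R/3$, so three such segments already give mass $\ge R$, a contradiction. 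Your sketch handles only doubly-orthogonal chords (which you correctly identify with the axes), not this chain phenomenon.

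\textbf{A second gap: you cannot bound $k$ from the mass.} Your plan to restrict to finitely many polygon types by arguing ``$\|V\|<R$ bounds the number of segments, hence $k\le l(R)$'' fails: inscribed $k$-polygons in a convex domain can have arbitrarily many arbitrarily short sides while keeping bounded perimeter (e.g.\ regular $k$-gons in $B^2$ have perimeter $\to 2\pi$). So you cannot invoke the convergence $P^E_k\to P_k$ only on a finite range. The paper's argument is instead \emph{uniform} in $k$: for $k\ge 4$ the billiard caustic is a confocal ellipse $E_k$, and one runs the same dichotomy as in Theorem~\ref{classif.geod.net.disc} (either $E_k$ has ``average radius'' $>0.7$, so the perimeter exceeds $|\partial E_k|>2\cdot 0.7\pi$, or else each side has length $>1.4$ and four sides already exceed $5.6$). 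This uses the Poncelet/caustic structure of ellipse billiards and works for all $k$ at once. A minor related slip: your application of Theorem~\ref{bound.density} with $\mu=R$ yields the density bound only for $\|V\|<R-\delta$, not for $\|V\|<R$; the correct choice is $\mu=3\sqrt{2}$, $\delta=3\sqrt{2}-R$, as the paper does.
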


\begin{proof}

Consider $E^2$ a planar full ellipse which boundary is given by an ellipse $x^2/a^2+y^2/b^2=1$ for $a > b$ with foci $F_1, F_2 \in Ox$ (see Fig. \ref{ellipses.1} (a)). Let $d$ and $D$ be the values of the smallest and largest diameters of $E^2,$ respectively. So, $d=2b$ and $D=2a.$ Also, here we are always considering $E^2$ sufficiently close to $B^2,$ so $d\approx D \approx 2,$ for example. 

Let $\mathcal{C}=R=3 \sqrt{2}-\delta,$ for some $\delta>0.$ We take $E^2 \approx B^2$ as in the \-{Theo}\-{rem} \ref{bound.density} and, as in the proof of the theorem above, applying Proposition \ref{geod.net.properties} (iv) and Theorem \ref{density.integer} to get: all junctions of $V$ in $\mbox{int}(E^2)$ are regular and the geodesic segments from each junction have multiplicity one; each segment of $V$ has multiplicity one or two and touches $\partial E^2$ orthogonally, or has multiplicity one and touches $\partial E^2$ making a reflexion and generating another segment with multiplicity one also. Therefore, $V$ can be the smallest or the largest diameters of $E^2,$ since they touch $\partial E^2$ \-{or}\-{tho}\-{go}\-{nally} (see Fig. \ref{ellipses.1} (a)). Also, $V$ can be $P^E_2+\widetilde{P}^E_2,$ and then $\|V\|(E^2)=2d, d+D$ or $2D,$ since $d\approx D \approx 2$ and $\|V\|<R<3 \sqrt{2}.$ We could have $V$ as in the Fig.  \ref{ellipses.1} (b): a segment \-{tou}\-{ching} $\partial E^2$ orthogonally at $A_1$, making a reflexion at $(0, b)\in \partial E^2$ with respect to $\partial E^2$ and generating another segment, which touches orthogonally $\partial E^2$ at $A_2=(-x(A_1), y(A_1)).$ This can  happen for $a>>b.$ However, for $E^2$ close to $B^2$ we have $a, b \approx 1,$ and the cases $V=P^E_2$ or $V=P^E_2+\widetilde{P}^E_2$ are the only possibilities such that $V$ touches $\partial M$ orthogonally in some point with $\|V\|(E^2)<R.$ Indeed, let $(a \cos(t), b \sin(t))$ be the polar coordinates on $\partial E^2$ for $t \in [0, 2 \pi),$ and take without loss of generality (by symmetry) $A\in \partial E^2$ such that $A=(a \cos (t_A), b \sin (t_A))$ for $t_A \in (3/4 \pi, 2 \pi).$ We claim that if a segment $\overline{AB} \subset E^2$ touches $\partial E^2$ orthogonally at $A,$ then $\overline{AB}$ is not orthogonal to $\partial E^2$ at $B \in \partial E^2,$ and the segment $\overline{BC},$ reflexion of $\overline{AB}$ at $B,$ is also not orthogonal to $\partial E^2$ at $C$  (see Fig. \ref{ellipses.1} (c)).


\begin{figure}[ht]

\begin{center}

\includegraphics[trim=70 630 225 75,clip,scale=1]{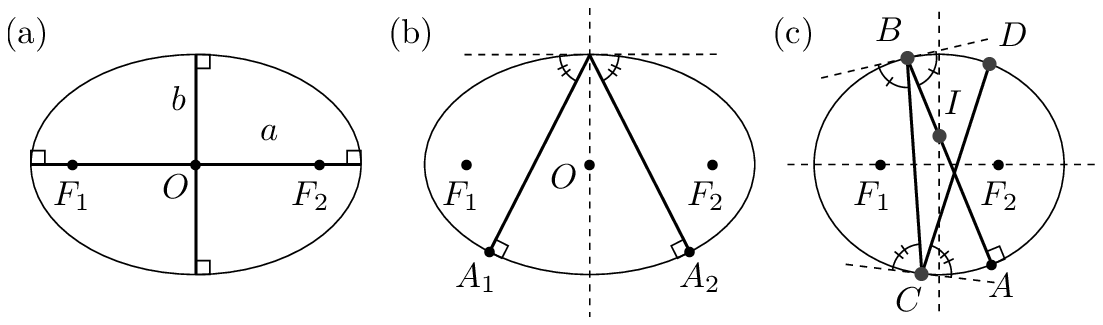}

\caption{}
	\label{ellipses.1}
	
\end{center}
\end{figure}

In fact, the equation of the straight line which is perpendicular to $\partial E^2$ at $A$ is given by
\begin{eqnarray*}
	y=\frac{a \tan(t_A)}{b}x+\sin(t_A)\left(b-\frac{a^2}{b}\right).
\end{eqnarray*}

If $\overline{AB}$ is orthogonal to $\partial E^2$ at $B=(a \cos(t_B), b \sin(t_B)),$ the equation of the straight line through $B$ is similar to above, this implies that $\tan(t_A)=\tan(t_B)$ and $\sin(t_A)=\sin(t_B),$ which is impossible, since $t_A \neq t_B.$ So, $\overline{AB}$ is not orthogonal to $\partial E^2$ at $B$ and there exists $\overline{BC},$ reflexion of $\overline{AB}$ at $B.$

Note that $a^2\leq 2 b^2,$ since $a, b \approx 1.$ So, for $x=0$ in the equation above, we see that $0<y(I)<b,$ where $I$ is the intersection of $\overline{AB}$ with $Oy$ (Fig.  \ref{ellipses.1} (c)). In an ellipse we have the following fact: if $\overline{AB}$ is orthogonal to $\partial E^2$ at $A,$ then $\overline{AB}$ bisects the angle $\angle F_1 AF_2.$ In particular, $\overline{AB}$ passes through $\overline{F_1 F_2}$ and, since $0<y(I)<b$, we have $t_B \in (\pi/2, \pi).$ Remember from billiard theory in ellipses that, if a segment in $E^2$ passes through $\overline{F_1F_2},$ then all the segments in that billiard trajectory (segments reflected at $\partial E^2$) pass through $\overline{F_1F_2}$ (see for example \cite[Th. 4]{Levi-Tabachnikov}). So $\overline{BC}$ passes through $\overline{F_1F_2}.$ 

Supposing that $\overline{BC}$ is orthogonal to $\partial E^2$ at $C,$ the same argument applied for $\overline{AB}$ can be applied to $\overline{BC}$ to get that $t_C \in (3/4 \pi, 2 \pi)$ and $t_C \neq t_A,$ where $C=(a \cos(t_C), b\sin(t_C)).$ Taking the equations of the straight lines that are perpendicular to $A$ and $C,$ respectively, we would have that they intersect at $B=(a \cos(t_B), b \sin(t_B)),$ then
$$ \frac{a^2}{b} \cos(t_B)(\tan(t_A)-\tan(t_C))+\left(\frac{b^2-a^2}{b}\right)(\sin(t_A)-\sin(t_C))=0. $$

As $t_A, t_C \in (3/4 \pi, 2 \pi),$ $t_A \neq t_C$ and $\cos(t_B), (b^2-a^2)<0,$ the left side of the last expression above is not equal to zero. Then, $\overline{BC}$ is not perpendicular to $\partial E^2$ at $C$ and there is another reflexion $\overline{CD}$ at $C$ (see Fig. \ref{ellipses.1} (c)).

Consider $E^2 \approx B^2$ such that each segment in $E^2$ through $\overline{F_1F_2}$ has length at least $R/3,$ since the length of each of these segments tending to 2 as $E^2$ tends to $B^2$ and $R<3 \sqrt{2}.$ By the above arguments, if $V\neq P^E_2$ and $V\neq P^E_2+\widetilde{P}^E_2,$ then $V$ has at least three segments, none of them is orthogonal to $\partial E^2$ and neither passes through $\overline{F_1F_2}.$ So, $V$ contains a closed $k$-polygon $\mathcal{P}_k,$ and moreover each segment is tangent to the same ellipse $\partial(E_k),$ where $E_k$ is a planar full ellipse inside of $\mathcal{P}_k$ and with the same foci of $E^2$ (see \cite[Th. 4]{Levi-Tabachnikov}). For simplicity, we just say that $\mathcal{P}_k$ is tangent to $\partial(E_k)$ 

The Poncelet theorem (see for instance \cite[Th. 4]{Ramirez-Ros}) says that if a closed $k$-polygon $\mathcal{P}_k$ is tangent to $\partial(E_k),$ then any other polygon $\mathcal{Q}$ that is  tangent to $\partial(E_k)$ is also a closed $k$-polygon with the same perimeter of $\mathcal{P}_k.$ Moreover, for each $k\geq 3$ there exists a unique $E_k$ such that all the convex closed $k$-polygons $P^E_k$ have its trajectory tangent to $\partial(E_k)$ (see for example \cite[Section 4]{Ramirez-Ros-Sonia}). In particular for a fixed $k\geq 3,$ all the polygons $P^E_k$ have the same perimeter.

Note that, given $A \in \partial E^2$ there is a unique $P^E_k$ through $A$ for each $k\geq 3.$ Indeed, just take the billiard trajectory starting at $A$ and tangent to $\partial E_k.$ Also, note that $|\partial(E_k)|<|\partial(E_{k+1})|$ since the tangency property of the polygons and the strict convexity of the ellipse $\partial (E)$  (see Fig. \ref{ellipses.3} (a)).


\begin{figure}[ht]

\begin{center}

\includegraphics[trim=75 633 240 80,clip,scale=1]{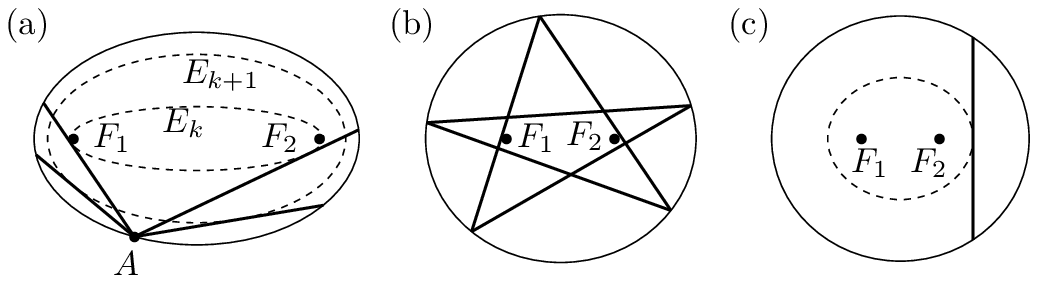}

\caption{}
	\label{ellipses.3} 
	
\end{center}
\end{figure}

We can take $E^2 \approx B^2$ such that $|P^E_3| \approx |P_3|,$ in particular $V$ does not contain $P^E_3,$ since $|P_3|> 3 \sqrt{2}.$ And if $V$  contains a closed $k$-polygon for $k \geq 4,$ we argue as in the proof of the theorem above. Indeed, the estimates in the accounts of the theorem above are strict, so for $E^2 \approx B^2$ and replace $C_k$ by $E_k$ with average radius approximately 0.7, we conclude that the perimeters are bigger than $3 \sqrt{2}$. Compare the Fig. \ref{circle.2} (d) and \ref{ellipses.3} (c). In the Fig. \ref{ellipses.3} (b) we have an example of a closed non-convex 5-polygon.		
\end{proof}

%
%
%

\subsection{Replacement and Regularity}

The regularity of stationary integral 1-varifolds for open sets was proven by Allard and Almgren (\cite{Allard-Almgren}, Section 3). As noted by Aiex (\cite{Aiex}, Th. 3.5), the regular structure described in \cite{Allard-Almgren} is exactly our definition of geodesic network. Precisely:

\begin{theorem} \label{regularidade.int} {\normalfont(\cite{Allard-Almgren}; \cite{Aiex}, Th. 3.5).} Let $M$ be a Riemannian manifold, $U \subset \mbox{int}(M)$ open and $K \subset U$ compact. If $V \in I\mathcal{V}_1 (M)$ is a stationary varifold in $U,$ then $V \mres K$ is a geodesic network. 
\end{theorem}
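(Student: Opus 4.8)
The statement is essentially a citation-level result, so the plan is to reduce our definition of geodesic network (Definition \ref{definition.geodesic.network}, restricted to the interior, i.e.\ conditions (i) and (\ref{cond.geod.net.interior})) to the structure theorem of Allard and Almgren for stationary one-dimensional integral varifolds in open subsets of a Riemannian manifold, as already observed by Aiex. First I would recall the content of \cite[Section 3]{Allard-Almgren}: a stationary integral $1$-varifold $V$ in an open set $U\subset\operatorname{int}(M)$ has support which is, locally, a finite union of geodesic arcs meeting at a locally finite set of points, and at each such junction point the sum of the unit tangent vectors of the incident arcs, weighted by their integer multiplicities and oriented to point away from the junction, vanishes. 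The key local input is the monotonicity formula together with the fact that every tangent varifold to a stationary integral $1$-varifold is a stationary integral cone in $\mathbb{R}^{n+1}$, i.e.\ a finite union of rays through the origin with integer multiplicities whose weighted tangent directions sum to zero; since the density is therefore a half-integer bounded on compact subsets, the support near any point is a finite union of geodesic segments emanating from it.

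The next step is purely bookkeeping: pass from the local description to a global one on the compact set $K$. Cover $K$ by finitely many balls in each of which the Allard--Almgren description holds, extend each maximal geodesic arc as far as it goes inside $U$ (a geodesic segment in $\operatorname{int}(M)$ since $K\subset U\subset\operatorname{int}(M)$ is open and $K$ compact), and observe that the set of junction points in $K$ is finite by the local finiteness and compactness. This gives the finite list $\{\alpha_1,\dots,\alpha_l\}\subset\operatorname{int}(M)$ of geodesic segments and integer multiplicities $\{\theta_1,\dots,\theta_l\}$ with $V\mres K=\sum_{i=1}^l v(\alpha_i\cap K,\theta_i)$, and at each junction $p$ in $K$ the balancing identity (\ref{cond.geod.net.interior}) holds by the stationarity (first variation) argument applied to a vector field supported near $p$. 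The requirement $m(p)\geq 3$ at interior junctions is automatic: if only two segments met at an interior point with the balancing condition, they would join smoothly into a single geodesic and $p$ would not be a genuine junction, so after merging collinear arcs we may assume $m\geq 3$. Note there is no boundary condition to verify here since $U\subset\operatorname{int}(M)$, so conditions (\ref{cond.geod.net.boundary}) and the free boundary part of the definition are vacuous on $K$.

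The main (and only real) obstacle is simply that this is not our own result: the substance lives in \cite{Allard-Almgren}, and the work is in checking that the regularity structure stated there — phrased in their language of ``networks'' — matches Definition \ref{definition.geodesic.network} verbatim once one passes to geodesic normal coordinates so that stationary integral $1$-varifolds become, locally, unions of straight segments with the balancing condition. Since Aiex has already carried out precisely this identification in \cite[Th. 3.5]{Aiex}, for our purposes it suffices to quote that reference, and I would write the proof as a one-line appeal: \emph{This is \cite[Section 3]{Allard-Almgren} together with the observation of \cite[Th. 3.5]{Aiex} that the regular structure produced there is exactly a finite geodesic network in the sense of Definition \ref{definition.geodesic.network}.}
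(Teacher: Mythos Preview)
Your proposal is correct and matches the paper's treatment exactly: the paper gives no proof of this theorem and simply records it as a citation of \cite{Allard-Almgren} together with Aiex's observation \cite[Th.~3.5]{Aiex} that the Allard--Almgren structure is precisely a geodesic network in the sense of Definition~\ref{definition.geodesic.network}. Your concluding one-line appeal is exactly what the paper does.
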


\begin{definition}  \label{mass.minim}
	Let $T \in \mathcal{Z}_{k} (M; \mathbb{Z}_2)$ and $U \subset M$ be a relatively open subset. We say that $T$ is \textit{locally mass minimizing in} $U$ if for every $p \in \mbox{spt}(T) \cap U$ there exists $r_p >0$ such that $B_{r_p}(p) \cap M \subset U$ and for all $S \in \mathcal{Z}_{k} (M; \mathbb{Z}_2)$ with $\mbox{spt}(T-S) \subset B_{r_p}(p) \cap M$ we have
	$$\textbf{M}(S)\geq \textbf{M}(T).$$
\end{definition}

By Proposition \ref{Fermi.Convex.theo} (iii), the definition above is equivalent if we take Fermi half-balls $\widetilde{\mathcal{B}}^+_{r_p}(p)$ instead of Euclidean balls $B_{r_p}(p)$ restricted to $M.$

The following theorem is about replacements of almost minimizing varifolds, which is one of the most important properties of this kind of varifolds. Roughly speaking, we can replace an almost minimizing varifold $V$ by another almost minimizing varifold $V^*,$ which has better regularity properties.

\begin{theorem} \label{replacement.theorem}
	Let $U \subset M$ be a relatively open set, $K \subset U$ compact and $V\in \mathcal{V}_k(M)$ be an $\mathbb{Z}_2$-almost minimizing varifold in $U$ with free boundary. There exists $V^* \in \mathcal{V}_k (M)$ such that
	\begin{itemize}
  \item[(i)] $V^* \mres (M\backslash K)=V\mres(M\backslash K);$
  \item[(ii)] $\|V^*\|(M)=\|V\|(M);$
  \item[(iii)] $V^*$ is $\mathbb{Z}_2$-almost minimizing in $U$ with free boundary;
   \item[(iv)] $V^* \in I\mathcal{V}(U \cap \mbox{int}(M));$
  \item[(v)] $V^* \mres U=\lim_{i \rightarrow \infty} |T_i|$ as varifolds for some $\{T_i\} \in \mathcal{Z}_{k, rel}(M, (M\backslash U)\cup \partial M; \mathbb{Z}_2)$ such that each $T^{0}_{i}$ is locally mass minimizing in $\mbox{int}_M (K).$
	\end{itemize}

\end{theorem}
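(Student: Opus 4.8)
The plan is to adapt the classical replacement construction of Pitts (see \cite[\S 3]{Pitts}, \cite[\S 7]{Marques-Neves} for the closed case, and \cite[\S 5--6]{Li-Zhou} for the free boundary case) to our one-dimensional setting, where the relevant ambient minimizing problem is solvable by direct methods. First I would fix a relatively open set $U'$ with $K \subset U' \subset\subset U$ and, using the definition of $\mathbb{Z}_2$-almost minimizing with free boundary, take for each $\epsilon=1/i$ a competitor current $T \in \mathcal{Z}_{k, rel}(M, \partial M; \mathbb{Z}_2)$ with $\mathbf{F}(V, |T|) < 1/i$ whose mass cannot be pushed down more than $\epsilon$ under $\delta$-fine deformations supported in $U$. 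Then I would solve the constrained minimization problem: minimize $\mathbf{M}(S)$ among all $S \in \mathcal{Z}_{k, rel}(M, \partial M; \mathbb{Z}_2)$ that are reachable from $T$ by a $\delta$-fine sequence staying within mass $\mathbf{M}(T)+\delta$ and with $\mathrm{spt}(S-T) \subset \overline{U'}$. A minimizer $T_i$ exists by compactness of mass-bounded sets of currents (Federer--Fleming, adapted to relative cycles as in \cite[Lemma 3.3]{Li-Zhou}); by construction $T_i$ agrees with $T$ outside $\overline{U'}$, hence $|T_i| = |T|$ outside $\overline{U'} \supset K$, and $\mathbf{M}(T_i) \le \mathbf{M}(T)$, while the almost-minimizing property gives $\mathbf{M}(T_i) \ge \mathbf{M}(T) - \epsilon$, so masses converge. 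Setting $V^* = \lim_i |T_i|$ (after passing to a subsequence, using the $\mathbf{F}$-metric compactness on mass-bounded varifolds) gives a varifold satisfying (i) and (ii).

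Next I would establish (v) and (iv) by analyzing the local minimality of $T_i$. The key point is that, away from the "frozen" region $M \setminus \mathrm{int}_M(K)$, each canonical representative $T_i^0$ is genuinely \emph{locally mass minimizing} in $\mathrm{int}_M(K)$: if it were not, one could find a small ball $B_{r}(p) \cap M \subset \mathrm{int}_M(K)$ and a competitor strictly reducing mass there, which could be inserted into the $\delta$-fine deformation sequence (for $i$ large the relevant $\delta$ is as small as we like) contradicting minimality of $T_i$ in the constrained problem — this is the standard Pitts argument, and in our relative setting one uses the canonical representative and the equivalence of Euclidean balls with Fermi half-balls from Proposition \ref{Fermi.Convex.theo}(iii) near $\partial M$. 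Then interior regularity of mass-minimizing $1$-currents (they are locally finite unions of geodesic segments, by Theorem \ref{regularidade.int} together with standard codimension-one regularity, or directly since a mass-minimizing $1$-cycle is a geodesic) gives that $V^* \mres (U \cap \mathrm{int}(M))$ is integral, which is (iv). For (iii), the almost-minimizing property of $V^*$ in $U$ follows by the usual combinatorial argument of concatenating deformations: any competitor sequence for $V^*$ supported in $U$ can be spliced together with the deformations realizing $V^*$ as a limit of the $T_i$, and the frozen part $V^* = V$ outside $K$ lets one compare directly against the almost-minimizing property of $V$; this is exactly as in \cite[Th. 3.10]{Pitts} / \cite[Prop. 6.3]{Li-Zhou}.

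The main obstacle I expect is the free boundary bookkeeping in step two: one must ensure that the constrained minimizer $T_i$, and the deformations exhibiting it, are genuinely compatible with the relative-cycle structure (i.e. that portions of competitors sliding into $\partial M$ are correctly accounted for via the canonical representative $T^0$), and that the almost-minimizing deformations supported in $U$ can be chosen to respect $\mathfrak{X}_{tan}(M)$ near the boundary so that no mass is artificially created or destroyed on $\partial M$. This is where the strict convexity of $\partial M$ (via Proposition \ref{Fermi.Convex.theo}) and the careful use of Fermi half-balls in Definition \ref{mass.minim} do the work, exactly paralleling \cite{Li-Zhou}; once that compatibility is in place the remaining arguments are the standard Pitts replacement machinery transported to dimension one, where the regularity theory for minimizers is elementary.
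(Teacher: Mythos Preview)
Your outline is essentially the Pitts/Li--Zhou replacement construction that the paper invokes: the paper's own proof is simply a reference to \cite[Prop.~5.3]{Li-Zhou} (with Lemmas~3.10 and~3.7 there replaced by \cite[Th.~2.3, Prop.~2.4]{Marques-Neves-Liokumovich}) together with \cite[Th.~3.11, 3.13]{Pitts} for (iv), and what you have written is a correct unpacking of that machinery.

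Two small remarks. First, your route to (iv) via the interior regularity of the locally mass-minimizing $T_i^0$ only controls $V^*$ on $\mathrm{int}_M(K)$; on $U\setminus K$ one has $V^*=V$, and $V$ is not assumed integral, so you still need the step the paper cites --- namely that (iii) plus Pitts \cite[Th.~3.11, 3.13]{Pitts} gives integrality of almost-minimizing varifolds throughout $U\cap\mathrm{int}(M)$ (alternatively, invoke Allard's integral compactness with the uniform lower density bound coming from almost minimizing). Second, strict convexity of $\partial M$ is not a hypothesis of this theorem and is not needed here; it enters only later, in Lemma~\ref{regularidade.replacement} and Theorem~\ref{theo.main.regularity}, so you can drop that from your bookkeeping discussion.
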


\begin{proof}
	The proof follows as in Prop. 5.3 from \cite{Li-Zhou}, replacing Lemmas 3.10 and 3.7 by Th. 2.3 and Prop. 2.4 from \cite{Marques-Neves-Liokumovich}, respectively. See also Th. 3.11 and 3.13 from \cite{Pitts} to get (iv) from (iii). 	
\end{proof}

 The varifold $V^*$ in the above theorem is called of a \textit{replacement of} $V$ \textit{in} $K.$ See Section \ref{section2.1} to remember the notation of $T^{0}_{i}$ above.

 In the next lemma we prove a weak regularity of $V^* \in \mathcal{V}_1(M)$ for manifolds with strictly convex boundary.

\begin{lemma} {\normalfont (Weak Regularity of Replacements)} \label{regularidade.replacement}
Under the same hypotheses of Theorem \ref{replacement.theorem}, assume that $\partial M$ is strictly convex and take $V $ a one-dimensional varifold. Then $\mbox{spt}\|V^*\| \cap \mbox{int}_M (K)$ is a free boundary geodesic network (possibly infinite) without junctions in $(K \cap \mbox{int}(M))\backslash \partial_{rel} K,$ such that each geodesic segment has endpoints in $\partial_{rel} K \cup \partial M,$ and they can touch $\partial M \cap \mbox{int}_M (K)$  only orthogonally. 
	
\end{lemma}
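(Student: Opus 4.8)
The plan is to combine the interior regularity of Allard--Almgren (Theorem \ref{regularidade.int}) with the structure of the approximating currents $T_i$ coming from item (v) of Theorem \ref{replacement.theorem}, and then handle the behavior at $\partial M$ using the strict convexity hypothesis. First, by item (iv) of Theorem \ref{replacement.theorem} and Theorem \ref{regularidade.int}, $V^*$ is a stationary integral $1$-varifold on $U \cap \mathrm{int}(M)$, so for any compact $K' \subset U \cap \mathrm{int}(M)$, $V^* \mathbin{\vrule height 1.1ex depth 0pt width 0.13ex\vrule height 0.13ex depth 0pt width 1.1ex} K'$ is a finite geodesic network. This already gives the geodesic-segment structure of $\mathrm{spt}\|V^*\| \cap \mathrm{int}_M(K)$ away from $\partial M$; what remains is (a) to rule out junctions in the relative interior $(K \cap \mathrm{int}(M)) \setminus \partial_{rel} K$, and (b) to analyze the points of $\mathrm{spt}\|V^*\| \cap \partial M \cap \mathrm{int}_M(K)$, showing the segments there meet $\partial M$ orthogonally.

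For (a), I would exploit that by (v) the varifold $V^* \mathbin{\vrule height 1.1ex depth 0pt width 0.13ex\vrule height 0.13ex depth 0pt width 1.1ex} U$ is a limit of $|T_i|$ with each canonical representative $T_i^0$ locally mass minimizing in $\mathrm{int}_M(K)$. A locally mass-minimizing mod $2$ $1$-current in a surface is locally supported on a minimizing geodesic, and in particular has no branch points or triple junctions in its relative interior: any point of its support has a neighborhood in which it is a single embedded minimizing geodesic arc (this is the standard codimension-one regularity in dimension $\le 7$, specialized to $n=1$). Passing to the varifold limit, $V^* \mathbin{\vrule height 1.1ex depth 0pt width 0.13ex\vrule height 0.13ex depth 0pt width 1.1ex} (\mathrm{int}_M(K) \cap \mathrm{int}(M))$ therefore has density one at every point of its support off $\partial_{rel} K$, and by Proposition \ref{geod.net.properties}(iii)--(iv) a density-one geodesic network has no interior junctions at all. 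Hence every geodesic segment of $\mathrm{spt}\|V^*\| \cap \mathrm{int}_M(K)$ runs uninterrupted until it exits $\mathrm{int}_M(K)$, i.e.\ until it hits $\partial_{rel} K$ or $\partial M$; this gives the claim that the endpoints lie in $\partial_{rel} K \cup \partial M$.

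For (b), the point is that $V^*$ is still $\mathbb{Z}_2$-almost minimizing in $U$ with free boundary (item (iii)), hence stationary in $U$ with free boundary, so its first variation vanishes against all tangential vector fields. Near a boundary point $p \in \mathrm{spt}\|V^*\| \cap \partial M \cap \mathrm{int}_M(K)$, pick a small Fermi half-ball $\widetilde{\mathcal{B}}^+_r(p)$ with $r < r_{Fermi}$ which, by Proposition \ref{Fermi.Convex.theo}(ii), is relatively strictly convex, and examine the geodesic segments of $V^*$ (already known from the interior analysis) abutting $p$. The free-boundary stationarity forces the multiplicity-weighted resultant of the incoming unit tangent vectors to be normal to $\partial M$ at $p$, which is exactly condition \eqref{cond.geod.net.boundary}; combined with density one and strict convexity of $\partial M$ — which prevents a geodesic segment from being tangent to or re-entering through $\partial M$ — this pins each segment down to meeting $\partial M$ orthogonally, so $\mathrm{spt}\|V^*\| \cap \mathrm{int}_M(K)$ is a (possibly infinite) free boundary geodesic network with the asserted properties.

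The main obstacle I expect is item (b): extracting from abstract free-boundary stationarity (and the weak convergence $|T_i| \to V^*$) the sharp geometric conclusion that segments touch $\partial M$ only orthogonally, rather than, say, accumulating tangentially or forming a boundary cusp. This is precisely where strict convexity of $\partial M$ is essential, in the spirit of \cite{Li-Zhou}: one argues that a segment making a non-right angle with $\partial M$ at a density-one boundary point would, by the first-variation identity applied to a suitable tangential deformation pushing mass off $\partial M$, contradict stationarity; strict convexity ensures the deformation strictly decreases mass. Making the monotonicity/tangent-cone analysis at boundary points rigorous (using the Fermi half-balls and Proposition \ref{Fermi.Convex.theo}) is the technical heart, while the interior statements (a) are comparatively routine given Theorems \ref{regularidade.int} and \ref{replacement.theorem}.
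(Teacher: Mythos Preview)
Your approach diverges from the paper's at a crucial point, and the divergence creates a real gap. The step ``Passing to the varifold limit, $V^* \mres (\mathrm{int}_M(K) \cap \mathrm{int}(M))$ therefore has density one at every point of its support off $\partial_{rel} K$'' is not justified: weak varifold convergence does not preserve density. Even though each $T_i^0$ is locally mass minimizing and hence has multiplicity one, nearby segments of different $T_i$ can collapse together in the limit, producing a segment of $V^*$ with multiplicity two or higher. Once the density-one claim fails, both of your deductions collapse: for (a), Proposition~\ref{geod.net.properties}(iii)--(iv) no longer applies to exclude singular junctions; for (b), the free-boundary stationarity condition~\eqref{cond.geod.net.boundary} only forces the \emph{resultant} of the incoming unit tangents to be normal to $\partial M$, and without density one you cannot upgrade this to the lemma's stronger conclusion that each individual segment meets $\partial M$ orthogonally.

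The paper's proof sidesteps this entirely. Rather than arguing abstractly about $V^*$ via stationarity and density, it works directly with the approximating currents: each $T_i^0$, being locally mass minimizing, consists of pairwise disjoint geodesic segments which (by the shortest-path property) meet $\partial M$ orthogonally. The key technical input you are missing is a \emph{smooth} compactness theorem for free boundary minimal hypersurfaces (the paper cites \cite[Th.~6.1]{Guang-Li-Zhou}), which upgrades the varifold convergence $|T_i| \to V^*$ to smooth convergence on $\mathrm{int}_M(K)$ after passing to a subsequence. Smooth convergence transports the structure of the $T_i$ --- no junctions, orthogonal contact with $\partial M$ --- directly to $V^*$, possibly with higher integer multiplicity but without creating new junctions or destroying orthogonality. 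So the difficulty you anticipated in (b) is not where the work lies; the real issue is controlling the passage to the limit, and that requires an external compactness result rather than a density argument.
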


\begin{proof}

From \cite[Prop. 4.6]{Aiex} we know that if $T$ is a one-cycle that is locally mass minimizing in an open set $W \subset \mbox{int}(M)$ and $Z \subset W$ is compact, then $T \mres Z$ is a geodesic network (finite) such that each geodesic segment has endpoints in $W \backslash Z$ and those segments do not intersect each other. So, for a relatively compact $K \subset M$ and $T^0_i$ locally mass minimizing in $\mbox{int}_M(K)$ (as in Theorem \ref{replacement.theorem}, (v)), we have that $T^0_i \mres \mbox{int}_M (K)$ is given by geodesic segments not intersecting each other, all segments have endpoints in $U\backslash K,$ and each segment that touches $\partial M \cap \mbox{int}_M (K)$ is orthogonal to $\partial M,$ in particular $|T^0_i| \mres \mbox{int}_M (K)$ is a free boundary geodesic network (possibly infinite). Indeed, as $T^0_i$ is locally mass minimizing, each segment of $T^0_i$ that touches $\partial M$ is locally the shortest path, so it is orthogonal to $\partial M.$

In the proof of Theorem \ref{replacement.theorem} we have that $\textbf{M}(T_i)$ is uniformly bounded, so we can use Th. 6.1 from \cite{Guang-Li-Zhou} to get that the limit in the Theorem \ref{replacement.theorem} (v) is smooth (after passing to a subsequence) for this lemma, then $V^*$ is given by geodesic segments such that each segment has endpoints in $\partial_{rel} K \cup \partial M,$ and they can touch $\partial M \cap \mbox{int}_M (K)$ only orthogonally. The last one follows from the fact that $\partial M$ is strictly convex, thus geodesic segments can touch $\partial M$ only in its endpoints. Also, as each $T_i$ only can touch $\partial M \cap \mbox{int}_M (K)$ orthogonally, therefore the same happens in the limit.
 
 Finally, as the segments of each $T_i$ do not intersect each other, we have that in the limit we do not have junctions. 
\end{proof}

We called the result above as weak regularity, because we do not know if the number of geodesic segments could be infinite. However, the above lemma is true for any codimension.

Let $p \in \mathbb{R}^2$ and let $C \in \mathcal{V}_1(\mathbb{R}^2)$ be a varifold such that $C=\sum_{i=1}^{l} v(r_i, m_i)$ for some $l, m_1, \cdots, m_l \in \mathbb{N},$ and each $r_i$ is some semi-straight line from $p.$ We say that $C$ is a \textit{cone with vertex at} $p.$ 

The next Lemma is very important to prove our main result about regularity. Essentially, we use it to glue replacements on overlapping annuli (see Step 2 in the proof of Theorem \ref{theo.main.regularity}).

\begin{lemma} \label{lemma.cone}
	Let $C \in I \mathcal{V}_1(\mathbb{R}^2)$ be a stationary cone with vertex at the origin $0 \in \mathbb{R}^2,$ and such that it is $\mathbb{Z}_2$-almost minimizing in $B_2(0) \subset \mathbb{R}^2.$ Then $C= v(r, m),$ for some $r$ a straight line passing through the origin $0,$ and for some  $m \in \mathbb{N}.$
\end{lemma}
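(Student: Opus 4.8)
The plan is to exploit the fact that a stationary one-dimensional cone $C$ with vertex at the origin is, by Theorem \ref{regularidade.int} applied in an annulus $A_{s,r}(0)$, a finite geodesic network away from $0$; since it is a cone, away from the origin it is a locally finite union of straight half-lines $r_1,\dots,r_l$ emanating from $0$, with integer multiplicities $m_1,\dots,m_l$, and with no junctions except possibly at $0$. Stationarity of $C$ in $B_2(0)$ forces the balancing condition $\sum_{i=1}^l m_i \dot r_i(0)=0$ at the vertex. The goal is to show that the almost minimizing hypothesis rules out every configuration except a single multiplicity-$m$ line, i.e.\ the case where the half-lines pair up into two opposite rays $r,-r$ carrying the same total multiplicity $m$, and moreover that there is really just one such pair.

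First I would reduce to showing $l\le 2$. Suppose $l\ge 3$. The strategy is to produce a competitor that decreases mass, contradicting the almost minimizing property in $B_2(0)$ (via the definition: one builds a short discrete deformation, supported in $B_2(0)$, with controlled mass along the way, that strictly lowers the mass at the end). Concretely, consider the current $T$ with $\textbf{F}(C,|T|)$ small whose existence is guaranteed by almost minimizality; restricted to a small ball $B_\rho(0)$ it looks like $l$ radial segments meeting at $0$ with multiplicities $m_i$, total mass $\rho\sum_i m_i$. If some two of the directions $\dot r_i(0),\dot r_j(0)$ are not antipodal, one can ``round the corner'': replace the two segments from $0$ to the sphere $\partial B_\rho(0)$ by a path that connects their endpoints through the interior avoiding $0$, strictly shortening total length because the triangle inequality is strict for non-collinear points. (If multiplicities differ, keep the excess multiplicity $|m_i-m_j|$ as a radial segment and only reroute the common part $\min(m_i,m_j)$.) This is exactly the computation behind Proposition \ref{geod.net.properties}: a junction with $\Theta^1(C,0)=\tfrac12\sum m_i$ that is not ``regular'' can be broken to save mass. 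Since $C$ is a cone this surgery can be done at arbitrarily small scale $\rho$, the deformation has mass arbitrarily close to $\|C\|$ throughout and strictly smaller mass at the end, violating almost minimizality. Hence all the $\dot r_i(0)$ come in antipodal pairs with matched multiplicities; but then $C$ is supported on a union of full lines through $0$, and if there were two distinct such lines $r,r'$ the point $0$ would be a genuine crossing (a junction with four rays), to which the same rerouting surgery applies and again decreases mass. Therefore $C$ is supported on a single line $r$ through $0$, with a single multiplicity $m\in\mathbb N$, i.e.\ $C=v(r,m)$.

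The main obstacle I anticipate is making the ``round the corner'' surgery legitimate in the almost minimizing framework rather than just the varifold first-variation framework: one must exhibit an honest finite sequence $T=T_0,T_1,\dots,T_N$ of relative cycles, supported-difference inside $B_2(0)$, with $\mathcal F(T_i-T_{i-1})\le\delta$ and $\textbf M(T_i)\le\textbf M(T)+\delta$ for the prescribed $\delta$, ending with $\textbf M(T_N)<\textbf M(T)-\epsilon$. Choosing the scale $\rho$ small (using that $C$ is a cone so the picture is scale-invariant and the ``savings'' scale linearly with $\rho$) one can make the per-step flat-norm increments and the overshoot in mass as small as desired while keeping the final gain a fixed fraction of $\rho\min(m_i,m_j)$; interpolating the shortening continuously and discretizing finely gives the required chain. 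An alternative, cleaner route is to quote the density structure directly: by Proposition \ref{geod.net.properties}(iii)--(iv) together with Theorem \ref{density.integer}, an interior junction of an almost minimizing one-varifold with density $<2$ must be a triple junction and with density exactly achievable only by specific configurations — but a \emph{cone} that is a triple junction is not stationary in $\mathbb R^2$ (three unit vectors at $120^\circ$ are balanced, but that configuration is not a cone over a line, and one checks directly it is not almost minimizing in $B_2(0)$ because the Steiner-type competitor — the ``Y'' replaced by a shorter ``Y'' is impossible, but replaced by two segments — lowers mass), forcing the density at $0$ to correspond to a straight line. I would present the surgery argument as the primary one and remark that for the low-mass cones actually arising (density $\le$ a small constant) the classification is immediate from Proposition \ref{geod.net.properties}.
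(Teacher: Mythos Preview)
Your approach is genuinely different from the paper's, and it has a real gap at exactly the point you anticipate.

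The paper does not attempt a direct competitor construction. Instead it argues by induction on $\Theta^1_0(C,\infty)$ using the replacement machinery: take a replacement $C^*$ of $C$ on $\overline{B}_1(0)$ via Theorem~\ref{replacement.theorem}. Then $C^*$ agrees with $C$ on the annulus $B_2(0)\setminus\overline{B}_1(0)$, is stationary and almost minimizing in $B_2(0)$, and (by Lemma~\ref{regularidade.replacement}) has \emph{no junctions} in the open disk $B_1(0)$. For each $y\in\partial B_1(0)\cap\mathrm{spt}\|C^*\|$ the monotonicity formula gives $\Theta^1(C^*,y)\le\Theta^1_0(C,\infty)$; if equality holds at some $y$ then $C^*$ is itself a cone at $y$, which together with $C^*=C$ on the annulus forces $C$ onto the single line through $0$ and $y$. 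Otherwise the inductive hypothesis (applied to $\mathrm{VarTan}(C^*,y)$, which inherits almost minimizing from Pitts \cite[Th.~3.11, 3.12]{Pitts}) shows each tangent cone at $y$ is a single line; since outside $\overline{B}_1(0)$ the varifold $C^*=C$ is radial from $0$, that line must be $r_y$, the line through $0$ and $y$. Thus each component of $C^*$ in $B_1(0)$ is a full diameter of $B_1(0)$, and the no-junction property forces all these diameters to coincide. Hence $C=v(r,m)$.

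The gap in your surgery argument is not merely technical. To contradict almost minimizing you must fix $\epsilon>0$ once and for all, and then for \emph{every} $\delta>0$ and \emph{every} current $T$ with $\textbf{F}(|T|,C)<\epsilon$ produce a $\delta$-fine deformation of that specific $T$ dropping mass by $\epsilon$. You do not control $T$: it may already be a ``pre-rounded'' configuration with no vertex at $0$ at all, and there is no reason it should present a corner at which your shortcut applies. Your suggestion to take $\rho$ small does not help, since the savings $c\rho$ then shrink below any pre-chosen $\epsilon$. One can try to argue that $\textbf{F}$-closeness to $C$ forces $T$ to carry enough mass in radial directions in some fixed annulus $A_{1/2,1}(0)$ that a surgery at unit scale still saves a definite amount, but carrying this out rigorously is essentially as much work as the replacement argument and is not in your write-up.

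Your alternative route via Theorem~\ref{density.integer} is also incomplete: that theorem yields only $\Theta^1(C,0)\in\mathbb{N}$, which excludes the triple junction (density $3/2$) but not a cross of two lines (density $2$), nor balanced configurations like six rays at $60^\circ$ (density $3$). Ruling those out still requires an argument beyond integrality of the density, and the paper's replacement-plus-induction supplies exactly that.
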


\begin{proof}

We use the following fact: if $C$ is $\mathbb{Z}_2$-almost minimizing in $B_2(0),$ then each varifold tangent is also a stationary integral varifold on $T_x \mathbb{R}^2 \equiv \mathbb{R}^2$ such that it is $\mathbb{Z}_2$-almost minimizing in any bounded open subset of $\mathbb{R}^2$ \cite[Th. 3.11 and 3.12(1)]{Pitts}.

By Theorem \ref{density.integer} we have that $\Theta^1(C, 0)=k$ for some $k \in \mathbb{N}.$  

We prove the result by induction on $\Theta^1_0(C, \infty).$ Indeed, the result is obvious for $\Theta^1_0(C,\infty)\leq 1.$ Suppose that $\Theta^1_0(C, \infty) =k+1,$ and that the result is true for $\Theta^1_0(C, \infty)\leq k,$ $k\geq1.$ Let $C^*$ be a replacement of $C$ on $\overline{B}_1(0),$ we know that $C^*$ is integral, \-{sta}\-{tio}\-{na}\-{ry} and $\mathbb{Z}_2$-almost minimizing in $B_{2}(0).$ Also, $\|C^*\|(B_2(0))=\|C\|(B_2(0)),$  $C^*\mres (B_2(0) \backslash \overline{B}_1(0))=C\mres (B_2(0) \backslash \overline{B}_1(0)),$ and together with the monotonicity formula we get \vspace{-0.2cm}
$$\Theta^1_y(\mbox{VarTan}(C^*, y), \infty)=\Theta^1(C^*, y)\leq \lim_{\rho\rightarrow \infty}\Theta^1_y(C^*, \rho)=\Theta^1_0(C, \infty), \vspace{-0.2cm}$$
where $y \in \partial B_1(0) \cap \mbox{spt}\|C^*\|.$ 

We have two cases: $\Theta^1(C^*, y)= \lim_{\rho\rightarrow \infty} \Theta^1_y(C^*, \rho)$ for some $y \in \partial B_1(0) \cap \mbox{spt}\|C^*\|,$ or  $\lim_{\rho\rightarrow \infty}\Theta^1(C^*, y)< \Theta^1_y(C^*, \rho)$ for any $y \in \partial B_1(0) \cap \mbox{spt}\|C^*\|.$ In the first case, $C^*$ is a cone with vertex at $y.$  This implies that $C=v(r_y, m),$ for some $m \in \mathbb{N}$ and $r_y$ is the straight line that passes through $y$ and the origin, since  $C^*\mres (B_2(0) \backslash \overline{B}_1(0))=C\mres (B_2(0) \backslash \overline{B}_1(0)).$

In the second case, $\Theta^1_x(\mbox{VarTan}(C^*, y), \infty)\leq k$ for any $y \in \partial B_1(0) \cap \mbox{spt}\|C^*\|,$  since $ \Theta^1_0(C, \infty)=k+1.$ So, as $\mbox{VarTan}(C^*, y)$ is $\mathbb{Z}_2$-almost minimizing in $B_2(0),$ we can use the induction hypothesis for each $y$ to get that $\mbox{VarTan}(C^*, y)=v(r_y, m_y)$ for some $m_y \in \mathbb{N}$ and $r_y$ is the straight line that passes through $y$ and the origin. Using that $C^*\mres (B_2(0) \backslash \overline{B}_1(0))=C\mres (B_2(0) \backslash \overline{B}_1(0)),$ we conclude $C=v(r, m)$ for some $m \in \mathbb{N},$ and for some straight line $r$ through the origin.
\end{proof}

The next result is a boundary maximum principle for stationary varifolds with free boundary in codimension one case.

\begin{theorem} \label{Max.Princ.Boundary}
{\normalfont (Boundary maximum principle \cite[Th. 2.5]{Li-Zhou}).} 
Let $U \subset M^{n+1}$ be a relatively open subset and $V \in \mathcal{V}_n(M)$ be stationary with free boundary in $U.$ Suppose $N \subset \subset U$ is a relatively open connected subset in $M$ such that
\begin{itemize}
  \item[(i)] $\partial_{rel}N$ meets $\partial M$ orthogonally, if $\partial_{rel}N \cap \partial M \neq \emptyset;$
  \item[(ii)] $N$ is relatively strict convex in $M;$
  \item[(iii)] $\mbox{spt}\|V\| \subset \overline{N}.$
\end{itemize}
Then we have $\mbox{spt}\|V\| \cap \partial_{rel} N = \emptyset.$

\end{theorem}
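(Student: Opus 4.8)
The plan is to argue by contradiction, turning the strict convexity of $N$ into a first–variation obstruction preventing $\mbox{spt}\|V\|$ from reaching $\partial_{rel}N$; this is the free–boundary analogue of the classical maximum principle for stationary varifolds, and the only genuinely new point is a touching point lying on $\partial M$, where hypothesis (i) enters. Suppose then that there is $p\in\mbox{spt}\|V\|\cap\partial_{rel}N$. Since $N\subset\subset U$, the varifold $V$ is stationary with free boundary in a relatively open neighborhood $W$ of $p$ with $\overline{W}\subset U$. Using the strict convexity of $\partial_{rel}N$ (hypothesis (ii)) I would build, after shrinking $W$, a function $\psi\in C^\infty(W)$ with $N\cap W=\{\psi<0\}$, $\partial_{rel}N\cap W=\{\psi=0\}$, $\nabla^M\psi\neq 0$ on $\{\psi=0\}$, and $\operatorname{Hess}^M\psi\ge\lambda\,g$ on $W$ for some $\lambda>0$ — for instance the signed distance to $\partial_{rel}N$ in $M$ (negative on $N$), which is uniformly convex near a strictly convex hypersurface, possibly precomposed with a convex increasing function. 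When $p\in\partial M$, hypothesis (i) forces $\partial_{rel}N\perp\partial M$, so $\nabla^M\psi$ is tangent to $\partial M$ along $\partial M\cap W$; this is precisely the property that makes the barrier vector fields below admissible. By hypothesis (iii), $\psi\le 0$ on $\mbox{spt}\|V\|\cap W$ with equality at $p$, i.e. $\psi$ restricted to $\mbox{spt}\|V\|$ attains its maximum $0$ at $p$.

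The core step is a Solomon–White type strong maximum principle. For a nonnegative nonincreasing $\chi\in C_c^\infty((-\infty,0])$ with $\chi(0)>0$ and support close to $0$, set $X:=\chi(\psi)\,\nabla^M\psi$. Then $X$ is smooth, compactly supported in $W$, and $X\in\mathfrak{X}_{tan}(M)$ — trivially if $p\in\mbox{int}(M)$, and by the tangency just noted if $p\in\partial M$. Free–boundary stationarity gives $\delta V(X)=0$, and the first–variation formula applied to $X=\chi(\psi)\nabla^M\psi$ yields
\begin{equation*}
0=\int\Big(\chi'(\psi)\,|\nabla^{T_xV}\psi|^2+\chi(\psi)\,\operatorname{tr}_{T_xV}\!\big(\operatorname{Hess}^M\psi\big)\Big)\,d\|V\|(x),
\end{equation*}
where the first integrand is $\le 0$ since $\chi'\le 0$, while $\operatorname{tr}_{T_xV}(\operatorname{Hess}^M\psi)\ge n\lambda>0$ by uniform convexity. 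Iterating this inequality over a shrinking family of such barriers (equivalently, running the Solomon–White strong maximum principle argument, whose test fields are all of the form $\chi(\psi)\nabla^M\psi$ and hence remain valid under the restriction to $\mathfrak{X}_{tan}(M)$) forces $\mbox{spt}\|V\|\cap W'\subset\{\psi=0\}=\partial_{rel}N$ on some smaller neighborhood $W'$ of $p$. But a stationary integral $n$–varifold whose support lies in a smooth hypersurface has, for $\|V\|$–a.e.\ $x$, approximate tangent plane equal to $T_x(\partial_{rel}N)$, so it is a multiple of $\partial_{rel}N\cap W'$; testing stationarity against a unit normal field of $\partial_{rel}N$ (which is again tangent to $\partial M$, by (i)) then shows $\partial_{rel}N$ has vanishing mean curvature along $\mbox{spt}\|V\|$, contradicting its strict convexity. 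Hence $\mbox{spt}\|V\|\cap\partial_{rel}N=\emptyset$.

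The main obstacle is exactly the free–boundary case $p\in\partial M$: one must check that hypothesis (i) is precisely the condition keeping every barrier vector field tangent to $\partial M$, so that free–boundary stationarity (not full stationarity) suffices, and that the Solomon–White iteration survives this restriction — which it does, since each of its test fields is a function of $\psi$ times $\nabla^M\psi$. A clean alternative route is to pass to the double $\widehat M$ of $M$ across $\partial M$: by (i) the reflected region $\widehat N$ is again one–sidedly strictly convex near $p$ and, because $V$ is stationary with free boundary, the reflected varifold $\widehat V$ is stationary in $\widehat M$ near $p$, reducing everything to the interior maximum principle; the price of this route is having to control the limited ($C^{1,1}$, in Fermi coordinates) regularity of the doubled metric and of the reflected barrier, which is where most of the technical care would go.
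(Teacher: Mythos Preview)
The paper does not prove this theorem; it is quoted from Li--Zhou \cite[Th. 2.5]{Li-Zhou} and used as a black box. Your overall strategy --- a Solomon--White barrier argument, with hypothesis (i) supplying tangency of the test field to $\partial M$ --- is the right mechanism and is indeed how the cited result is proved.

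There is, however, a real gap at the free-boundary step. You assert that for the signed distance $\psi$ to $\partial_{rel}N$, hypothesis (i) makes $\nabla^M\psi$ tangent to $\partial M$ along all of $\partial M\cap W$; but (i) only guarantees this on $\partial_{rel}N\cap\partial M$, not at nearby points of $\partial M$. A clean illustration (ignoring convexity, which is irrelevant to the \emph{direction} of $\nabla\psi$): take $M=\{(x,y):y\ge x^2\}\subset\mathbb{R}^2$ and let $\partial_{rel}N$ be the positive $y$-axis, which meets $\partial M$ orthogonally at the origin; then $\psi(x,y)=x$, $\nabla\psi=\partial_x$, yet at $q=(\epsilon,\epsilon^2)\in\partial M$ one has $T_q(\partial M)=\operatorname{span}(1,2\epsilon)$, which does not contain $\partial_x$ for $\epsilon\ne 0$. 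Thus $X=\chi(\psi)\nabla^M\psi\notin\mathfrak{X}_{tan}(M)$ and free-boundary stationarity cannot be applied as written; precomposing $\psi$ with an increasing convex function does not help, since it leaves the direction of $\nabla\psi$ unchanged. The actual work in the free-boundary case is to produce a barrier whose nearby level sets \emph{all} meet $\partial M$ orthogonally (equivalently, to correct the normal component of the test field) while retaining the convexity, and that construction is what is missing from your sketch. A secondary point: your final contradiction invokes integrality of $V$, which is not assumed (the statement is for $V\in\mathcal{V}_n(M)$); once $\psi$ is correctly built, the displayed first-variation identity together with $\operatorname{tr}_S\operatorname{Hess}^M\psi\ge n\lambda$ for \emph{every} $n$-plane $S$ already forces $\int\chi(\psi)\,d\|V\|=0$, yielding the contradiction directly without rectifiability or iteration.
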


Now we prove our main theorem about regularity of stationary $\mathbb{Z}_2$-almost minimizing varifolds with free boundary.

\begin{theorem} \label{theo.main.regularity}
Let $M^2$ be a compact Riemannian manifold with non-empty strictly convex boundary. If $V \in I\mathcal{V}_1 (M)$ is a stationary varifold with free boundary such that it is integral in $M$ and $\mathbb{Z}_2$-almost minimizing in small anulli with free boundary, then $V$ is a free boundary geodesic network.	
\end{theorem}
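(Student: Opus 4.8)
The plan is to work locally around an arbitrary point $p \in \mathrm{spt}\|V\|$ and show that, in a small neighborhood, $V$ has the structure of a free boundary geodesic network; since $M$ is compact, finiteness of the segments and junctions then follows from a covering argument together with the density bounds already available. I would split into the interior case ($p \in \mathrm{int}(M)$) and the boundary case ($p \in \partial M$). For interior points, Theorem \ref{regularidade.int} (Allard--Almgren) already gives that $V \mres K$ is a geodesic network for any compact $K \subset \mathrm{int}(M)$, so nothing new is needed there; the entire content of the theorem is the behavior at $\partial M$.

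For $p \in \partial M$, the strategy follows the replacement/blow-up scheme familiar from Pitts and from Li--Zhou. First, since $V$ is $\mathbb{Z}_2$-almost minimizing in small annuli with free boundary, fix $r < r_{Fermi}$ with $\widetilde{\mathcal B}^{+}_{2r}(p)$ relatively strictly convex and $V$ almost minimizing in each annulus $\mathcal A_{s,2r}(p)$ for $0<s<2r$. Step 1: produce successive replacements. Take a replacement $V^{*}$ of $V$ in a closed Fermi half-annulus $\mathrm{Clos}(\mathcal A_{s_1,s_2}(p))$ via Theorem \ref{replacement.theorem}; by Lemma \ref{regularidade.replacement} the support of $V^{*}$ inside the open half-annulus is a (possibly infinite) free boundary geodesic network with no junctions in the interior of the half-annulus, all segments having endpoints on the relative boundary sphere pieces or on $\partial M$, and meeting $\partial M$ only orthogonally. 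Step 2: iterate on an overlapping half-annulus and glue. Taking $V^{**}$ a replacement of $V^{*}$ on a second half-annulus overlapping the first, one wants to conclude that across the overlap the segments fit together into straight geodesic segments. This is exactly where Lemma \ref{lemma.cone} enters: on the overlap region where both $V^{*}$ and $V^{**}$ are regular, one rescales (blows up) at a common point; the blow-up is a stationary integral cone that is $\mathbb{Z}_2$-almost minimizing, hence by Lemma \ref{lemma.cone} it is a single multiplicity-$m$ line, which forces the two networks to agree and the segments to continue as genuine geodesics through the overlap. Pushing $s_1 \to 0$ then shows $V$ itself, near $p$, is a free boundary geodesic network in the punctured half-ball $\widetilde{\mathcal B}^{+}_{r}(p)\setminus\{p\}$.

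Step 3: analyze the point $p$ itself. One blows $V$ up at $p$: varifold tangents $C \in \mathrm{VarTan}(V,p)$ exist (stationarity gives the monotonicity of $\Theta^{1}_p(V,\cdot)$, using the Fermi half-ball monotonicity from Proposition \ref{Fermi.Convex.theo}), and $C$ is a stationary integral $1$-varifold in the half-space $T_p M^{+}$ that is $\mathbb{Z}_2$-almost minimizing with free boundary on the flat boundary $\partial(T_pM^{+})$. Reflecting $C$ across this flat boundary (or equivalently doubling) produces a stationary integral cone in $\mathbb{R}^2$ with vertex at $0$, to which a version of Lemma \ref{lemma.cone} applies: $C$ is a union of rays from $0$, and the free boundary (orthogonality) condition for $V$ at $p$ is precisely the statement that the sum of the tangent directions, with multiplicities, is perpendicular to $\partial M$ at $p$ --- which is condition \eqref{cond.geod.net.boundary}. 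Combined with Step 2 (finitely many segments emanate from a neighborhood of $p$, by the density bound $\Theta^1(V,p) \le \|V\|(M)/2 < \infty$ and discreteness of the junction set), this shows $p$ is an admissible free boundary junction. Step 4: globalize. By compactness of $\mathrm{spt}\|V\|$, cover it by finitely many such interior and boundary neighborhoods; in each, $V$ is a finite union of geodesic segments with the junction conditions \eqref{cond.geod.net.interior} (interior, from Allard--Almgren/stationarity) and \eqref{cond.geod.net.boundary} (boundary, from Step 3), and the total mass bound prevents accumulation of segments, so the union is finite. Hence $V$ is a free boundary finite geodesic network.

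The main obstacle I anticipate is Step 2 --- the gluing of replacements across overlapping half-annuli near the boundary --- and more specifically verifying that the ``gluing'' really does produce a single honest geodesic network rather than a more complicated configuration, e.g. ruling out that the regular pieces of $V^{*}$ and $V^{**}$ meet at a point on $\partial M$ non-orthogonally, or that spurious junctions or tangential touchings of $\partial M$ appear in the limit $s_1\to 0$. Here the strict convexity of $\partial M$ is essential (it forces geodesics to touch $\partial M$ only at endpoints, and it makes the Fermi half-balls relatively strictly convex so that Theorem \ref{Max.Princ.Boundary}, the boundary maximum principle, confines the support and prevents the network from escaping or developing boundary singularities away from $p$). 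I would expect to invoke the maximum principle Theorem \ref{Max.Princ.Boundary} repeatedly to control $\mathrm{spt}\|V\|$ near $\partial M$ and Lemma \ref{lemma.cone} to identify every blow-up as a line, so that the only genuine singular behavior is at honest junctions, finitely many by the density bounds of Theorem \ref{bound.density}.
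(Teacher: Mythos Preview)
Your outline follows the Li--Zhou/Pitts replacement scheme and is essentially the same architecture as the paper's proof, but there is one genuine gap and one unnecessary detour.

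\textbf{The gap: unique continuation.} After gluing the successive replacements you obtain a regular object $\Sigma$ in the punctured Fermi half-ball $\widetilde{\mathcal B}^{+}_{s}(p)\setminus\{p\}$, but you assert without argument that ``pushing $s_1\to 0$ then shows $V$ itself'' is a free boundary geodesic network there. This does not follow: the replacement $V^{**}$ agrees with $V$ only in $\widetilde{\mathcal B}^{+}_{s_1}(p)$, so as $s_1\to 0$ you know $V^{**}$ is regular on larger and larger annuli, but you have no a priori identification $V^{**}=V$ on those annuli. The paper closes this gap with a separate argument (its Step~3): one shows that the set
\[
T^V_p=\bigl\{y\in\mathrm{spt}\|V\|:\ \mathrm{VarTan}(V,y)\ \text{is transversal to}\ \widetilde{\mathcal S}^{+}_{\widetilde r_p(y)}(p)\bigr\}
\]
is dense in $\mathrm{spt}\|V\|\cap\widetilde{\mathcal B}^{+}_{s}(p)$ (a first-variation/coarea argument using the strict convexity of Fermi half-balls), and then for each $y\in T^V_p$ one chooses the inner radius of the second replacement to be exactly $\rho=\widetilde r_p(y)$, so that $V^{**}=V$ on $\widetilde{\mathcal B}^{+}_{\rho}(p)$ and transversality forces $y\in\overline\Sigma$. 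Density plus closedness gives $\mathrm{spt}\|V\|\subset\Sigma$; the reverse inclusion uses the constancy theorem in the interior and the orthogonal-touching structure of $\Sigma$ along $\partial M$. You flagged the gluing across the overlap sphere as the main obstacle, but that step is routine via Lemma~\ref{lemma.cone}; the substantive issue is this identification of $\Sigma$ with $\mathrm{spt}\|V\|$.

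\textbf{The detour at $p$.} Your Step~3 (blow up at $p$, reflect across $T_p(\partial M)$, invoke ``a version of'' Lemma~\ref{lemma.cone}) is both unnecessary and shaky: the reflected cone is stationary and integral, but you have not argued it is $\mathbb{Z}_2$-almost minimizing in the doubled space, and in any case Lemma~\ref{lemma.cone} would force a \emph{single} line, which is too restrictive for a general boundary junction. The paper avoids this entirely: once $\mathrm{spt}\|V\|=\Sigma$ in the punctured half-ball and $\Sigma$ has only finitely many segments there (established in Step~1 by showing every segment of a replacement must reach the outer Fermi sphere, hence finite mass forces finitely many), the free-boundary condition \eqref{cond.geod.net.boundary} at $p$ is immediate from stationarity of $V$ with free boundary---just test against vector fields supported near $p$. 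No blow-up at $p$ is needed.
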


\begin{proof}

Here we follow similarly to the proof of  \cite[Th. 5.2]{Li-Zhou} and \cite[Prop. 6.3]{Colding-Lellis}, with the necessary modifications.

Given $p \in  \mbox{spt}\|V\| \cap \mbox{int}(M),$ we know by the Theorem \ref{regularidade.int} that in a small compact neighborhood around $p$ we have that $V$ is a geodesic network. So, assume that $p\in \mbox{spt}\|V\| \cap \partial M$ and fix $r>0$ such that 
\begin{eqnarray}\label{choose.r}
r<\frac{1}{4} \mbox{min} \{r_{Fermi}, r_{am}(p), r_{ort}(p)\},
\end{eqnarray}
where $r_{am}(p)>0 $ is such that $V$ is $\mathbb{Z}_2$-almost minimizing in $\mathcal{A}_{s,t}(p)$ with free boundary for all $0<s<t<r_{am},$ and $r_{ort}(p)>0$ is such that two distinct geodesics that are orthogonal to $\partial M \cap \widetilde{\mathcal{B}}^+_{\delta}(p)$ do not intersect each other in  $ \widetilde{\mathcal{B}}^+_{\delta}(p)$ for all $0<\delta<r_{ort}(p).$

Note that, as a consequence of the maximum principle (Theorem \ref{Max.Princ.Boundary}), we have the following: if $W \in \mathcal{V}_1 (M)$ is stationary in $\widetilde{\mathcal{B}}^+_{r}(p)$ with free boundary for $p \in \mbox{spt}\|W\| \cap \partial M$ and $r$ as above, then 
\begin{eqnarray}\label{max.princ.conseq.}
 \mbox{spt}\|W\| \cap \widetilde{\mathcal{S}}^+_t(p) \neq	\emptyset \quad \ \mbox{for all} \ 0<t\leq r.
\end{eqnarray}
In fact, there exists  $t_0 \in (0, r]$ the smallest number such that  $\mbox{spt}\|W \mres \widetilde{\mathcal{B}}^+_{r}(p) \| \subset \mbox{Clos}\big(\widetilde{\mathcal{B}}^+_{t_0}(p) \big).$ By the maximum principle we have that $\mbox{spt}\|W \mres \widetilde{\mathcal{B}}^+_{r}(p) \| \cap  \widetilde{\mathcal{S}}^+_{t_0}(p)=\emptyset,$ then $\mbox{spt}\|W \mres  \widetilde{\mathcal{B}}^+_{r}(p)\| \subset \mbox{Clos}\big(\widetilde{\mathcal{B}}^+_{t_1}(p)\big)$ for some $0<t_1< t_0,$ which is contradiction.  Using the same argument and suppose only that $W \neq 0$ in $\widetilde{\mathcal{B}}^+_r(p)$ for some $p \in \mbox{spt}\|W\| \cap \partial M,$ we conclude that there exists $0<\widetilde{t}<r$ such that
\begin{eqnarray}\label{max.princ.conseq.2}
 \mbox{spt}\|W\| \cap \widetilde{\mathcal{S}}^+_t(p) \neq	\emptyset \quad \mbox{for all} \ 0<\widetilde{t}<t\leq r.
\end{eqnarray}

\vspace{0.3cm}

\textbf{Step 1:} \textit{Constructing successive replacements on two overlapping annuli.}

\

Fix any $0<s<t<r.$ As $r<(1/4) r_{am}$ and $V$ is $\mathbb{Z}_2$-almost minimizing in $\mathcal{A}_{\widetilde{s},r_{am}/2}(p)$  with free boundary for all $0<\widetilde{s}<t<r_{am}/2,$ we can use the Theorem \ref{replacement.theorem} to get a first replacement $V^*$ of $V$ on $K=\overline{\mathcal{A}_{s,t}(p)}.$ The Lemma \ref{regularidade.replacement} says that
$$\Sigma_1:= \mbox{spt}\|V^*\| \cap \mathcal{A}_{s,t}(p)$$
is a free boundary geodesic network (possibly infinite). By Theorem \ref{replacement.theorem} (iii) we have that $V^*$ is still $\mathbb{Z}_2$-almost minimizing in $\mathcal{A}_{\widetilde{s},r_{am}/2}(p)$  with free boundary for all $0<\widetilde{s}<t<r_{am}/2,$ so we can apply again the Theorem \ref{replacement.theorem} to get a second replacement $V^{**}$ of $V^*$ on $K=\overline{\mathcal{A}_{s_1,s_2}(p)}$ for $0<s_1<s<s_2<t.$ Again, 
$$\Sigma_2:=\mbox{spt}\|V^{**}\| \cap \mathcal{A}_{s_1,s_2}(p)$$
is a free boundary geodesic network (possibly infinite). Let us consider the following choices: we fix any $s_1 \in (0, s),$ and we choose $s_2 \in (s, t)$ such that $\mbox{VarTan}(\Sigma_1, x)$ is a straight line transversal to $\widetilde{\mathcal{S}}^{+}_{s_2}(p)$ for all $x\in (\widetilde{\mathcal{S}}^{+}_{s_2}(p) \backslash \partial M),$ and $(\alpha \cap \widetilde{\mathcal{S}}^{+}_{s_2}(p)) \backslash \partial M \neq \emptyset$ for every geodesic segment $\alpha \in \Sigma_1.$ Indeed, fixing $s_2 \in (s, t),$ we know by the regularity of replacements  (Lemma \ref{regularidade.replacement})  that $\mbox{VarTan}(\Sigma_1, x)$ is a straight line for any $x \in \mathcal{A}_{s,t}(p).$ Also,  we  have only a finite number of geodesic segments $\{\alpha_i\}\subset \Sigma_1$ in $\mathcal{A}_{s, \widetilde{t}}(p)$ for any $0<s<\widetilde{t}<t.$ To see the last one, note that any geodesic segment (with possible multiplicity) $\alpha_i \in \Sigma_1 \cap \mathcal{A}_{s, t}(p)$ has to touch $\widetilde{\mathcal{S}}^+_{t}(p).$ Indeed, by the Lemma \ref{regularidade.replacement} each $\alpha_i$ has to touch $\widetilde{\mathcal{S}}^+_s(p) \cup \widetilde{\mathcal{S}}^+_{t}(p) \cup (\partial M \cap \mathcal{A}_{s, t}(p))$ and it can touch $\partial M \cap \mathcal{A}_{s, t}(p)$ only orthogonally. Using that any two orthogonal geodesic segments to $\partial M$ do not intersect each other in $\widetilde{\mathcal{B}}^+_r(p),$ together with the fact that $\widetilde{\mathcal{S}}^+_{s}(p)$ is strictly convex and orthogonal to $\partial M,$ we conclude that if $\alpha_i \in \Sigma_1 $ touches $\partial M \cap \mathcal{A}_{s, t}(p),$ then $\alpha_i \cap \widetilde{\mathcal{S}}^+_{s}(p) \neq \emptyset$ only if $\alpha_i$ touches $\widetilde{\mathcal{S}}^+_{s}(p) \cap \partial M$ (see Fig. \ref{sucess.replacem}). Also, if $\alpha_i$ does not touch $\widetilde{\mathcal{S}}^+_{t}(p),$ then its endpoints cannot be on $\partial M \cap \mathcal{A}_{s, t}(p),$ because $\alpha_i$ would be a stationary varifold with free boundary, contradicting  (\ref{max.princ.conseq.2}). Then, any $\alpha_i$ that touches $\widetilde{\mathcal{S}}^+_s(p)$ or $\partial M \cap \mathcal{A}_{s, t}(p),$ should touch $\widetilde{\mathcal{S}}^+_{t}(p).$  Therefore, if there is an infinite number of geodesic segments $\{\alpha_i\}\subset \Sigma_1$ in $\mathcal{A}_{s, \widetilde{t}}(p),$ then there are an infinite number of geodesic segments from $\widetilde{\mathcal{S}}^+_{\widetilde{t}}(p)$ to $\widetilde{\mathcal{S}}^+_{t}(p),$ contradicting the fact that $\Sigma_1$ has finite mass. Thus the set $\{\alpha_i\}$ is  finite. Finally, using again the strict convexity of $\widetilde{\mathcal{S}}^{+}_{s_2}(p),$ each geodesic segment that is tangent to $\widetilde{\mathcal{S}}^{+}_{s_2}(p)$ cannot touch $\widetilde{\mathcal{S}}^{+}_{\widetilde{s}_2}(p)$ for all $0<\widetilde{s}_2<s_2.$ So, by the finiteness of the geodesic segments and by (\ref{max.princ.conseq.}), we can choose $s_2 \in (s, t)$ as requested (see Fig. \ref{sucess.replacem}).


\begin{figure}[ht]

\begin{center}

\includegraphics[trim=85 585 195 70,clip,scale=1]{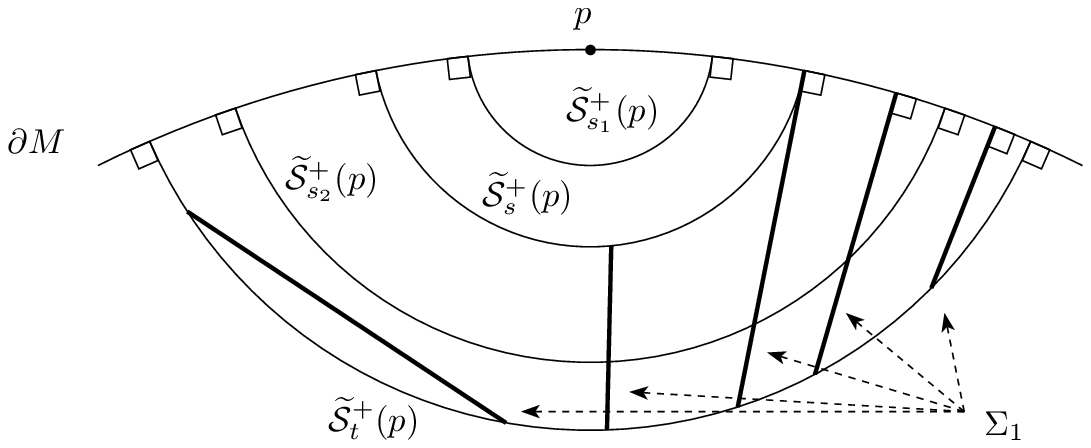}

\caption{}
	\label{sucess.replacem}
	
\end{center}
\end{figure}

Note that each $\alpha_i \subset \Sigma_1$ has to touch $\widetilde{\mathcal{S}}^+_t (p)$ at points in $\mbox{int}(M),$ since $\widetilde{\mathcal{S}}^+_t (p)$ is orthogonal to $\partial M.$

\vspace{0.5cm}

\textbf{Step 2:} \textit{Gluing} $\Sigma_1$ \textit{and} $\Sigma_2$  \textit{across} $\widetilde{\mathcal{S}}^+_{s_2}(p).$ 

\

As before, any geodesic segment (with possible multiplicity) $\beta_i \in \Sigma_2 \cap \mathcal{A}_{s, s_2}(p)$ has to touch $\widetilde{\mathcal{S}}^+_{s_2}(p)$ in points belonging to $\mbox{int}(M).$ Since  $V^{**}$ is stationary and integral in $\mathcal{A}_{s_1, t}(p),$ we have by the interior regularity (\-{Theo}\-{rem} \ref{regularidade.int}) that each $x \in \mbox{spt}\|V^{**}\| \cap \mbox{int}(M) \cap \mathcal{A}_{s, t}(p)$ belongs to a finite number of geodesic segments (including multiplicity). In particular, if $x \in \mbox{spt}\|V^{**}\| \cap \mbox{int}(M) \cap \widetilde{\mathcal{S}}^+_{s_2}(p)$ then $x$ belongs to $\overline{\Sigma}_1\cap \overline{\Sigma}_2,$ since each geodesic segment of $\Sigma_1$ touches $\widetilde{\mathcal{S}}^+_{s_2}(p)$ transversally. So, $\Sigma_1$ and $\Sigma_2$ glue continuously across $\widetilde{\mathcal{S}}^+_{s_2}(p).$ Note that $\mbox{spt}\|V^{**}\| \cap \widetilde{\mathcal{S}}^+_{s_2}(p)=\overline{\Sigma}_1\cap \widetilde{\mathcal{S}}^+_{s_2}(p)=\overline{\Sigma}_2\cap \widetilde{\mathcal{S}}^+_{s_2}(p) \subset \mbox{int}(M).$ Moreover, as $\mbox{VarTan}(V^{**}, x)$ is a cone satisfying Lemma \ref{lemma.cone}, we see that the gluing is actually $C^1$ (smooth) since $\mbox{VarTan}(V^{**}, x)$ is a straight line (with possible multiplicity).

\vspace{0.5cm}

\textbf{Step 3:} \textit{Unique continuation up to the point} $p.$

\

By Step 2 and property (i) of Theorem \ref{replacement.theorem}, we can extend $\Sigma_2$ to $\widetilde{\Sigma}_2$ in $\mathcal{A}_{s_1, t}(p)$ such that $\widetilde{\Sigma}_{2}=\Sigma_1$ on $\mathcal{A}_{s, t}(p),$ $\widetilde{\Sigma}_2$ is given by geodesic segments possibly with multiplicity and without interior junctions that can touch $\mathcal{A}_{s_1, t}(p)\cap \partial M$ only orthogonally, $\widetilde{\Sigma}_2 \mres \mathcal{A}_{s, s_2}(p)$ has a finite number of geodesic segments, and each geodesic segment of $\widetilde{\Sigma}_2$ has to touch $\widetilde{\mathcal{S}}^+_{t}(p).$  Using (\ref{max.princ.conseq.}), we can continue to take replacements in this way for all $0<s_1<s.$ For each $0<s_1<s$ as before, denote $\widetilde{\Sigma}_2$ by $\Sigma_{s_1}.$  If $0<s'_1<s_1<0,$ then we have that $\Sigma_{s'_1}=\Sigma_{s_1}$ on $\mathcal{A}_{s_1,t}(p).$ Thus
$$\Sigma:=\bigcup_{0<s_1<s} \Sigma_{s_1}$$
in $\widetilde{\mathcal{B}}^+_{t}(p)$ is given by geodesic segments   possibly with multiplicity and without interior junctions that can  touch $\partial M \cap (\widetilde{\mathcal{B}}^+_{t}(p)\backslash \{p\})$ orthogonally only, and each geodesic segment of $\Sigma$ has to touch $\widetilde{\mathcal{S}}^+_{t}(p).$ Moreover, $\Sigma\mres \widetilde{\mathcal{B}}^+_{\widetilde{t}}(p)$ has a finite number of segments for all $0<\widetilde{t}<t$ (see Fig. \ref{sigma.regular}).


\begin{figure}[ht]

\begin{center}

\includegraphics[trim=80 640 310 70,clip,scale=1]{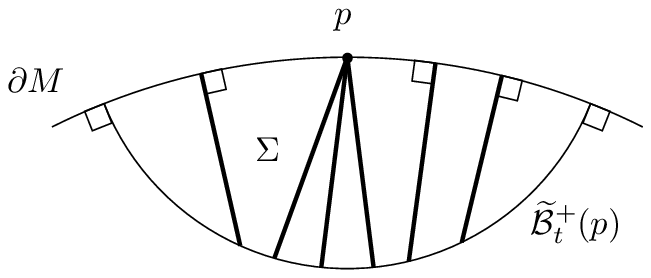}

\caption{}
		\label{sigma.regular}
	
\end{center}
\end{figure}

\textit{Claim:} $\mbox{spt}\|V\|=\Sigma$ \textit{in the punctured ball} $\widetilde{\mathcal{B}}^+_{s}(p)\backslash \{p\}.$

\

\textit{Proof of Claim:} Consider the set
\begin{eqnarray*}
T^V_p=\Big\{y \in \mbox{spt}\|V\|&:& \mbox{VarTan}(V, y) \ \mbox{is a straight line or} \\
&& \mbox{a semi-straight line transversal to} \ \widetilde{\mathcal{S}}^+_{\widetilde{r}_p (y)}(p) \Big\}.
\end{eqnarray*}
As in \cite[Claim 3, p. 42 ]{Li-Zhou}, we can use the convexity of small Fermi half-balls to apply a first variation argument as in \cite[Lemma B.2]{Colding-Lellis}, getting that the set $T^V_p$ is a dense subset of $\mbox{spt}\|V\| \cap  \widetilde{\mathcal{B}}^+_{s}(p).$ Note that the Lemma B from \cite{Colding-Lellis} holds, in the respective hypotheses, for $V \in I\mathcal{V}_n (M^{n+1}),$ $\forall n>0.$

Given $y \in T^V_p \cap (\widetilde{\mathcal{B}}^+_{s}(p)\backslash \{p\}),$ let $\rho=\widetilde{r}_p (y).$ Take $V^*$ the replacement of $V$ in $\mathcal{A}_{s,t}(p)$ and $V^{**}$ the replacement of $V^*$ in $\mathcal{A}_{\rho,s_2}(p)$ for $s_2 \in (s, t)$ chosen as in Step 1. By the property (i) from Th. \ref{replacement.theorem}, we have $V^{**}=V^*=V$ in $\widetilde{\mathcal{B}}^+_{\rho}(p),$ then
$$y \in \overline{\mbox{spt}\|V\|\cap \widetilde{\mathcal{B}}^+_{\rho}(p)}\cap \widetilde{\mathcal{S}}^+_{\rho}(p) =\overline{\mbox{spt}\|V^{**}\|\cap \widetilde{\mathcal{B}}^+_{\rho}(p)}\cap \widetilde{\mathcal{S}}^+_{\rho}(p). $$
Since $\mbox{spt}\|V^{**}\|=\Sigma$ in $\mathcal{A}_{\rho, t}(p)$ and $\mbox{VarTan}(V^{**},y)$ is transversal to $\widetilde{\mathcal{S}}^+_{\rho}(p),$ we have by (\ref{max.princ.conseq.}) and above that $y \in \Sigma.$ Thus, $T^V_p \cap (\widetilde{\mathcal{B}}^+_s(p)\backslash \{p\}) \subset \Sigma,$ and hence $\mbox{spt}\|V\| \cap (\widetilde{\mathcal{B}}^+_s(p)\backslash \{p\}) \subset \Sigma.$ The last one is deduced using that $T^V_p$ is a dense subset of $\mbox{spt}\|V\| \cap  \widetilde{\mathcal{B}}^+_{s}(p),$ and the fact that $\Sigma$ is compact in $\widetilde{\mathcal{B}}^+_{s}(p).$

To see the converse inclusion $\Sigma \subset \mbox{spt}\|V\|$ in $\widetilde{\mathcal{B}}^+_s (p),$ note that by the \-{Cons}\-{tan}\-{cy} Theorem \cite[Th. 41.1]{LeonSimon}, we have $\mbox{spt}\|V\|\cap (\widetilde{\mathcal{B}}^+_s (p) \backslash \{p\})=\Sigma$ in $M \backslash \partial M.$ For $y \in \Sigma \cap \partial M \cap (\widetilde{\mathcal{B}}^+_s (p)\backslash \{p\}),$ we know that $\mbox{VarTan}(\Sigma, y)$ is a straight line perpendicular to $T_y (\partial M),$ which implies that $y$ is a limit point of $\Sigma\cap \mbox{int}(M)$ and thus $y \in \mbox{spt}\|V\|.$ Therefore, $\mbox{spt}\|V\|\cap (\widetilde{\mathcal{B}}^+_s (p) \backslash \{p\})=\Sigma.$

\

\textbf{Step 4:} $V$ \textit{is a free boundary geodesic network}

\

From the interior regularity (Theorem \ref{regularidade.int}) and the Step 3, $V$ is a geodesic network (finite) in $  \widetilde{\mathcal{B}}^+_s (p)$ and a free boundary geodesic network (finite) in $(\widetilde{\mathcal{B}}^+_s (p)\backslash \{p\}).$ In \-{par}\-{ti}\-{cu}\-{lar,} $\Theta^1(V \mres \partial M, p)=0.$ So, if there exist geodesic segments at $p,$ as in the Fig. \ref{sigma.regular}, then those segments must  satisfy (\ref{cond.geod.net.boundary}), and then $V$ is a free boundary geodesic network (finite) in $\widetilde{\mathcal{B}}^+_s (p).$

Varying $p \in \mbox{spt}\|V\| \cap \partial M,$ we see that $V$ is a free boundary geodesic network (not necessarily finite) on $M.$ Given any compact $K \subset \mbox{int}(M),$ the interior regularity says that $V\mres K$ has a finite number of geodesic segments. So, we only need to find a compact $K\subset \mbox{int}(M)$ such that $V \mres (M\backslash K)$ has also a finite number of geodesic segments. Indeed, take a cover of $\mbox{spt}\|V\| \cap \partial M,$ by small open balls $\widetilde{\mathcal{B}}^+_s (p)$ as in the previous steps, extract a finite cover $\{\widetilde{\mathcal{B}}^+_{j} (p_j)\}_{j=1}^{l},$ and define $K:=\overline{M \backslash \big(\bigcup_{j=1}^{l} \widetilde{\mathcal{B}}^+_{j} (p_j)\big)}.$ Note that $K$ can be empty. This finishes the proof.
\end{proof}

\section{The Width of a Full Ellipse}
\label{sec:4}

In this section we prove our main theorem about $p$-widths: we calculate the first $p$-widths of $B^2$ and $E^2,$ where $E^2$ is a planar full ellipse $C^\infty$-close to $B^2.$ As in \cite{Aiex}, we take the $p$-sweepouts from Guth \cite[Section 6]{Guth}. We consider some adaptations to get a convenient upper bound for the mass of the cycles. Also, we need to take a better estimate than that given by the Cauchy-Crofton Formula. Indeed, to calculate the widths of the unit sphere in  \cite{Aiex}, the Cauchy-Crofton Formula gives a sharp estimate, which does not happen in our case.

\subsection{A Sweepout for $B^2$}

The sweepout that we use to calculate the $p$-widths is obtained by a map whose image is given by real algebraic varieties. The properties of this map can be found in Guth \cite[Section 6]{Guth}.

Let $Q_i: \mathbb{R}^2 \rightarrow \mathbb{R}$ denote the following polynomials for $i=1, \ldots, 4:$
$$Q_1(x,y)=x, \quad Q_2(x,y)=y, \quad Q_3(x,y)=x^2 \quad \mbox{and} \quad Q_4(x,y)=xy.$$

Also, put $A_p=\mbox{span}\left(\{1\} \bigcup_{i=1}^{p} Q_i\right)\backslash \{0\}$ and define the relation $Q \sim \lambda Q,$ for $\lambda \neq 0$ and $Q \in A_p.$ The quotient $(A_p, \sim)$ can be identified  with $\mathbb{RP}^p$ and by this identification we can define the map $F_p: \mathbb{RP}^p \rightarrow \mathcal{Z}_{1, rel} (B^2, \partial B^2; \mathbb{Z}_2),$ which send a class $[Q]$ to the real algebraic variety defined by $Q(x,y)=0$ restricted to $B^2,$ considered as a mod 2 relative Lipschitz cycle. As proved in \cite[Section 6]{Guth}, $F_p$ is a flat continuous map and it defines a $p$-sweepout. 

In the next lemma we use the Cauchy-Crofton formula to prove that $F_p$ has no concentration of mass, thus $F_p \in \mathcal{P}_p (B^2).$

\begin{lemma} \label{lemma.concentration.mass}
	The map $F_p: \mathbb{RP}^p \rightarrow \mathcal{Z}_{1, rel} (B^2, \partial B^2; \mathbb{Z}_2)$ has no concentration of mass for $p=1, \ldots, 4.$
\end{lemma}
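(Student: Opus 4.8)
The goal is to show that for each fixed $x \in B^2$, the quantity $\sup_{[Q] \in \mathbb{RP}^p} \|F_p([Q])\|(B_r(x))$ tends to $0$ as $r \to 0$, uniformly in $x$. The image $F_p([Q])$ is a real algebraic curve $\{Q = 0\} \cap B^2$ where $Q$ is a polynomial of degree at most $2$ (in fact, a linear combination $c_0 + c_1 x + c_2 y + c_3 x^2 + c_4 xy$, using only $Q_1,\dots,Q_4$). The natural tool, as the lemma statement indicates, is the Cauchy--Crofton formula: the length of a curve $\gamma$ equals a constant multiple of $\int_{\mathcal{L}} \#(\gamma \cap \ell)\, d\mu(\ell)$, where the integral is over the space of affine lines $\ell$ in $\mathbb{R}^2$ with its kinematic measure $\mu$. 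The key point is that a line $\ell$ meets $\{Q = 0\}$ in at most $\deg Q \le 2$ points (unless $\ell \subset \{Q = 0\}$, which happens only for a measure-zero set of lines, and anyway contributes a controlled amount since $\{Q=0\}$ can contain at most finitely many lines).

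First I would set up the Crofton estimate localized to a ball: for any algebraic curve $Z = \{Q = 0\}$ of degree $\le d$,
\[
\mathcal{H}^1(Z \cap B_r(x)) \;=\; C \int_{\mathcal{L}} \#\big( Z \cap \ell \cap B_r(x) \big)\, d\mu(\ell).
\]
Since $\#(Z \cap \ell) \le d$ for $\mu$-a.e.\ $\ell$, the integrand is bounded by $d$, and it is supported on the set of lines $\ell$ that actually meet $B_r(x)$. The $\mu$-measure of $\{\ell : \ell \cap B_r(x) \ne \emptyset\}$ is $C' r$ (proportional to the diameter). Hence $\mathcal{H}^1(Z \cap B_r(x)) \le C C' d\, r$, which goes to $0$ as $r \to 0$ uniformly in $x$ and uniformly over all $Q \in A_p$ (degree $\le 2$). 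One must be slightly careful that $F_p([Q])$ as a mod-2 cycle has mass equal to $\mathcal{H}^1$ of its support counted without multiplicity --- this is exactly the content of the construction in \cite[Section 6]{Guth}, where the varieties are taken as reduced Lipschitz cycles, so $\mathbf{M}(F_p([Q])) = \mathcal{H}^1(\{Q=0\}\cap B^2)$, and similarly restricted to any ball.

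The main obstacle --- really the only subtlety --- is handling the degenerate cases: when $\{Q = 0\}$ contains a line or a whole component on which the Crofton integrand is not bounded pointwise (a line $\ell$ lying inside $Z$ meets $Z$ in infinitely many points). This is dealt with by noting that a degree-$2$ plane algebraic set contains at most two lines; on the complement of those lines the generic bound $\#(Z\cap\ell)\le 2$ holds, and the lines themselves are $1$-dimensional pieces whose length inside $B_r(x)$ is at most $2r$ directly. A second minor point is the uniformity of the Crofton constant and of the measure estimate $\mu(\{\ell : \ell \cap B_r(x)\ne\emptyset\}) = C' r$ over all centers $x$; this holds by translation invariance of the kinematic measure. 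Combining, one gets a bound of the form $\mathbf{M}\big(F_p([Q])\mres B_r(x)\big) \le C_p\, r$ with $C_p$ independent of $[Q]$ and $x$, so $\lim_{r\to 0}\sup\{\,\|F_p([Q])\|(B_r(x)\setminus\partial B^2) : [Q]\in\mathbb{RP}^p,\ x\in B^2\,\} = 0$, which is precisely no concentration of mass; hence $F_p \in \mathcal{P}_p(B^2)$.
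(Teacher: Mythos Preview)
Your proof is correct and follows essentially the same approach as the paper: both use the Cauchy--Crofton formula together with the degree bound $\#(\{Q=0\}\cap\ell)\le 2$ and the fact that the measure of lines meeting $B_r(x)$ is $O(r)$, yielding a uniform linear bound on the mass in small balls. The paper carries this out with explicit $(\rho,\theta)$ coordinates and computes the resulting integrals directly, whereas you invoke the kinematic measure abstractly and are somewhat more careful about the degenerate case where $\ell\subset\{Q=0\}$; these are presentational differences, not substantive ones.
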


\begin{proof}

Without loss of generality, consider $P_0=(p_0,0) \in B^2$ for $p_0\geq 0,$ and the ball $B_s (P_0)$ for $s>0$ sufficiently small.  Fixing $[Q] \in  \mathbb{RP}^p$ and recall that every straight line $r$ in the plane can be parameterized by the equation $x \cos(\theta)+y \sin(\theta)=\rho,$ where $\rho$ is the distance from $r$ to the origin and  $\theta \in [0, 2\pi)$ is the angle between the axis $Ox$ and the straight line that is perpendicular to $r$ and passes through the origin. Denote a such straight line by $r_{\rho, \theta}$ and let $n(\rho, \theta)$ be the number of intersection points (with multiplicity) of the straight line $r_{\rho, \theta}$ with $F_p([Q])$ in $B_s(P_0).$ 
	
If $p_0>0,$ note that for $\theta \in [0, \pi/2-\sin^{-1}(s/p_0)]$ the straight line $r_{\rho, \theta}$ intersects $B_s(P_0)$ if and only if $\rho \in  [p_0 \cos(\theta)-s, p_0 \cos(\theta)+s]$ (see Fig. \ref{limites.intersec.Fk} $(a)).$ On the other hand, if $\theta \in  (\pi/2+\sin^{-1}(s/p_0), \pi],$ then $r_{\rho, \theta}$ does not intersect $B_s(P_0) \cap B^2$ for all $\rho$ (see Fig. \ref{limites.intersec.Fk} $(b)).$ And for $\theta \in (\pi/2-\sin^{-1}(s/p_0), \pi/2+\sin^{-1}(s/p_0)),$ the straight line $r_{\rho, \theta}$ does not intersect $B_s(P_0)$ if $\rho > 2s.$


\begin{figure}[ht]

\begin{center}

\includegraphics[trim=75 580 230 80,clip,scale=1]{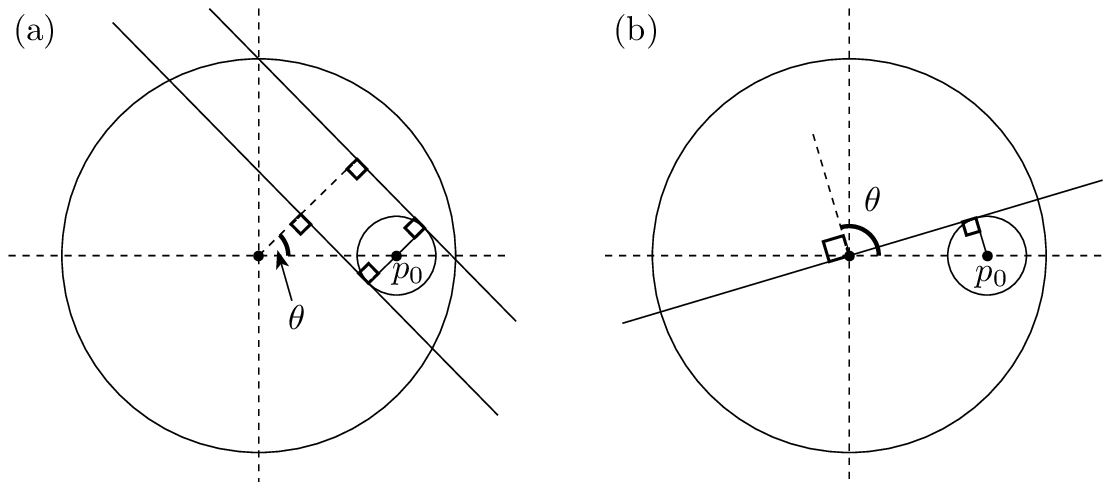}

\caption{}
			\label{limites.intersec.Fk}
	
\end{center}
\end{figure}

For $p=1, \ldots, 4,$ we have that $F_p([Q])$ is an algebraic variety of degree at most 2, so $F_p([Q])$ intersects $r_{\rho, \theta}$ at most two times. By the Cauchy-Crofton Formula we obtain
\begin{eqnarray*}
 \| F_p([Q])\| (B_s(P_0)\backslash \partial B^2) &\leq & \| F_p([Q])\| (B_s(P_0)) \\
 &=&\frac{1}{2}\int_{0}^{2 \pi}\int_{\mathbb{R^+}} n(\rho, \theta) d\rho d\theta \\
 &\leq &\frac{2}{2} \int_{0}^{\pi/2-\sin^{-1}(s/p_0)} \int_{p_0\cos(\theta)-s}^{p_0\cos(\theta)+s} 2 d\rho d\theta \\
  & &+ \frac{2}{2} \int_{\pi/2-\sin^{-1}(s/p_0)}^{\pi/2+\sin^{-1}(s/p_0)} \int_{0}^{2s} 2 d\rho d\theta \\
&=&4s \left(\frac{\pi}{2}+\sin^{-1}\left(\frac{s}{p_0}\right)\right).
\end{eqnarray*}

Similarly we have $\| F_p([Q])\| (B_s(P_0)\backslash \partial B^2) \leq 4 s \pi,$ when $p_0=0.$ Then, in all the cases we conclude that $\| F_p ([Q])\| (B_s(P_0)\backslash \partial B^2) \rightarrow 0$ as $s \rightarrow 0.$ 
\end{proof}

In the following, we estimate an upper bound for $\|F_p([Q])\|,$  $p=1, \ldots, 4.$ In other words, we estimate the maximum length of the algebraic variety $F_p([Q]).$ By the definitions above, $F_p([Q])$ is degenerate or is the restriction to $B^2$ of a straight line, or of two straight lines, or of a parabola, or  of a hyperbola. In other words, $F_p([Q])$ is a quadratic curve which is not an ellipse, since we excluded the monomial $Q_5(x, y)=y^2.$

\begin{lemma} \label{estimate.length}
	For any $[Q]\in \mathbb{RP}^p$ we have that $\|F_p([Q])\|\leq 2,$ $p=1,2,$ and  $\|F_p([Q])\| < 4.00267,$ $p=3, 4.$
\end{lemma}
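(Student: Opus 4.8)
The plan is to treat the three geometric types of curve that can arise as $F_p([Q])$ separately, estimating the length of each in $B^2=\overline{B_1(0)}$. For $p=1,2$ the variety $F_p([Q])$ is the restriction to $B^2$ of (at worst) a single straight line through the disk: indeed, $A_p=\operatorname{span}(\{1,Q_1,\dots,Q_p\})\setminus\{0\}$ with $Q_1=x$, $Q_2=y$, so the zero set of $a+bx+cy=0$ is a line (or empty, or all of $B^2$ when $a=b=c=0$, which is excluded). A chord of the unit disk has length at most $2$ (the diameter), and a degenerate class contributes $0$; hence $\|F_p([Q])\|\le 2$ for $p=1,2$. This case is immediate and requires no computation.

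For $p=3,4$ we have $F_p([Q])=\{Q=0\}\cap B^2$ where $Q$ is a polynomial of degree at most $2$ in the span of $1,x,y,x^2,xy$, so $\{Q=0\}$ is a (possibly degenerate) conic that is \emph{not} an ellipse, since the monomial $y^2$ is absent. First I would reduce to the irreducible cases: if $Q$ factors, then $F_p([Q])$ is a union of at most two lines (or a line with multiplicity), each contributing at most $2$, giving total length at most $4$, which already satisfies the bound. The remaining possibilities for the irreducible conic $\{Q=0\}$ are a parabola or a hyperbola (a branch of one of these), plus the cases where the curve degenerates; an ellipse cannot occur. So the real content is: \emph{bound the arclength inside the closed unit disk of an arc of a parabola or hyperbola}. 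Because the coefficient of $y^2$ vanishes, after a rotation one can write these conics in a normal form in which $x$ (or a linear coordinate) is a graphing variable, so the arc is a graph $y=f(x)$ over an interval inside $[-1,1]$, and the length is $\int\sqrt{1+f'(x)^2}\,dx$ over the portion with $x^2+f(x)^2\le 1$.

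The main obstacle — and the reason the constant is the strange number $4.00267$ rather than, say, $4$ — is that a parabola arc inside the unit disk can be slightly longer than a diameter: the worst case is an arc that enters near one boundary point, bulges, and exits near another, and one must optimize the arclength over the family of parabolas (two effective parameters after using the scaling $Q\sim\lambda Q$ and rotation) subject to the constraint that the arc stays in $\overline{B_1(0)}$. I would set up this optimization explicitly, reduce by symmetry (reflection across the axis of the parabola, and the rotational degree of freedom), show that the hyperbola case gives a strictly smaller bound than the parabola case (a hyperbolic branch is "flatter" near its vertex and its two ends diverge, so the portion inside the disk is shorter than the extremal parabola's), and finally carry out the one- or two-variable calculus optimization for the parabola family to get the numerical maximum, which comes out below $4.00267$. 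The details of this extremal computation are exactly what is deferred to the Appendix (Section \ref{appendix}), where the sharp estimate for these $p$-sweepouts is proved; here I would simply state that the computation yields the claimed bound and refer forward to that appendix for the verification.
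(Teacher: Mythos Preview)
Your overall strategy matches the paper's: dispose of the linear and reducible cases by the trivial bound $\le 4$, and push the genuine conic estimate to the Appendix. However, there is one case you have not actually handled and where your stated plan breaks down: a hyperbola \emph{both of whose branches} meet $B^2$. Your reduction ``after a rotation \dots\ the arc is a graph $y=f(x)$'' fails here, since the zero set is not a graph, and your heuristic that a hyperbolic branch is ``flatter near its vertex'' says nothing about the total length when two separate arcs lie in the disk. In the paper this case is treated directly inside the lemma (not in the Appendix) by a short geometric argument: one compares the hyperbola in a large disk $B_r(0)$ with its pair of asymptotes, obtaining $L(H\cap B_r(0))<4r+\varepsilon$, and then shrinks $r$ to $1$, noting that as long as four intersection points with $\partial B_r$ persist the length drops by at least $4s$ when $r$ decreases by $s$; letting $\varepsilon\to 0$ gives $L(H\cap B^2)\le 4$. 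Only after this is the remaining work (parabolas, and hyperbolas with a single branch meeting $B^2$) deferred to the Appendix, where the extremal value $\approx 4.00267$ is shown to come from a specific parabola. You should insert an argument for the two-branch case before invoking the Appendix.
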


\begin{proof}

Clearly, for $p=1, 2$ the algebraic variety  $F_p([Q])$ is degenerate or the restriction to $B^2$ of a straight line, thus $\|F_p([Q])\|\leq 2$ for $p=1, 2$ and for all $[Q] \in \mathbb{RP}^p.$

For $p=1, \ldots, 4$ note that if $F_p([Q])$ is degenerate or the intersection to $B^2$ of a straight line, or two straight lines, then $\|F_p([Q])\|\leq 4.$ Also, this estimate holds when $F_p([Q])$ is the restriction to $B^2$ of a hyperbola $H$ such that each branch intersects $B^2.$ Indeed, if we take $B_{r}(0)$ for $r \rightarrow \infty,$ we note that this hyperbola intersects $\partial B_r(0)$ in exactly four distinct points for all $r>1.$ In particular, as the diagonally opposite arms tend to the respective asymptote of those arms, we see that  for $r$ large the intersection of the two asymptotes is inside of $B_r(0)$ and each asymptote intersects $\partial B_r(0)$ in two points $z, z'$ (see Fig. \ref{hyperbola.estimate}). 


\begin{figure}[ht]

\begin{center}

\includegraphics[trim=90 590 330 70,clip,scale=1]{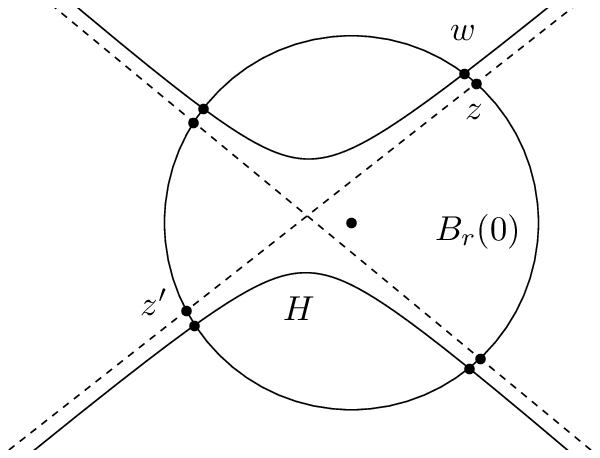}

\caption{}
	\label{hyperbola.estimate}
	
\end{center}
\end{figure}

Given a point $w \in H \cap \partial B_r(0)$ in a arm of a branch, take the respective half asymptote and consider the distance $|\overline{wz}|,$ where $z$ is the intersection between this half asymptote and $\partial B_r(0).$ Let $\varepsilon$ be the sum of the four distances given by the four points in $H \cap \partial B_r(0).$ Note that the length $L(H \cap B_r(0))$ of this hyperbola restricted to $B_{r}(0)$ is less than the length of the two asymptotes restricted to $B_{r}(0)$ added with $\varepsilon,$ so $L(H \cap B_r(0)) < 4r+\varepsilon.$  Now, decreasing $r$ to $r-s,$ $s \in (0,r-1],$ we note that the total reduction of length of the branches is at least $4s,$ since there exist four points in $H \cap \partial B^2$ during the reduction $r \rightarrow 1^{+}.$ Therefore $L(H \cap B^2) < 4+\varepsilon.$ Note that $\varepsilon \rightarrow 0$ as $r \rightarrow \infty,$ since each arm of the branches tends to their respective half asymptote. So, starting the reduction for $r$ as large as we want, we get that $L(H \cap B^2) < 4+\varepsilon$ for all $\varepsilon >0,$ that is, $L(H \cap B^2) \leq 4.$

In the other cases, hyperbolas with a unique branch intersecting $B^2$ or parabolas intersecting $B^2$, we prove in the Appendix that the maximum length of these curves restricted of the unity disk is bounded from above by approximately 4.00267, which concludes the lemma.
\end{proof}

\subsection{\textbf{The First Widths of $B^2$ and $E^2$}} \label{section.First.Widths}

Now, we prove our main result about $p$-widths: we calculate the low $p$-widths of the unit ball $B^2,$ and of full ellipses $C^{\infty}$-close to $B^2.$

\begin{theorem} \label{theo.main.widths}

 For $B^2$ we have 

\begin{itemize}
  \item[(i)] $\omega_1(B^2)=\omega_2(B^2)=2;$
  \item[(ii)] $\omega_3(B^2)=\omega_4(B^2)=4.$
\end{itemize}

Also, if $E^2$ is a full ellipse $C^{\infty}$-close to $B^2$ with small diameter $d$ and large diameter $D,$ then
\begin{itemize}
  \item[(iii)] $\omega_1(E^2)=d$ and $\omega_2(E^2)=D;$
  \item[(iv)] $\omega_3(E^2), \omega_4(E^2) \in \{2d, d+D, 2D\}.$
\end{itemize}

\end{theorem}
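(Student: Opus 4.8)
The plan is to combine the classification of low-mass free boundary geodesic networks (Theorems~\ref{classif.geod.net.disc} and \ref{classif.geod.net.ellip}) with the sweepout estimates (Lemmas~\ref{lemma.concentration.mass} and \ref{estimate.length}) to sandwich each width between a lower bound coming from the Min-Max Theorem and an upper bound coming from an explicit $p$-sweepout. The lower bounds come from Corollary~\ref{approx.width} together with Theorem~\ref{theo.main.regularity}: for each $p$ and each $\varepsilon>0$ there is a stationary, $\mathbb{Z}_2$-almost minimizing integral $1$-varifold $V$ with $\omega_p\le\|V\|\le\omega_p+\varepsilon$, and by the regularity theorem $V$ is a free boundary finite geodesic network. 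If we know $\omega_p(B^2)<3\sqrt2$ a priori, then the classification forces $V\in\{P_2,\ P_2+\widetilde P_2\}$, whose masses are $2$ and $4$ respectively; since these are the only possible values of $\|V\|$ below $3\sqrt2$ and the width is a limit of such $\|V\|$, we get that $\omega_p(B^2)\in\{2,4\}$ for each such $p$. The upper bounds come from $F_p\in\mathcal P_p(B^2)$: Lemma~\ref{lemma.concentration.mass} gives no concentration of mass, and Lemma~\ref{estimate.length} gives $\sup_{[Q]}\|F_p([Q])\|\le 2$ for $p=1,2$ and $<4.00267$ for $p=3,4$; hence $\omega_1(B^2)\le\omega_2(B^2)\le 2$ and $\omega_3(B^2)\le\omega_4(B^2)\le 4.00267<3\sqrt2$.

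For part (i): the upper bound gives $\omega_1(B^2)\le\omega_2(B^2)\le 2$. Since a $1$-sweepout cannot be trivial, $\omega_1(B^2)>0$, and a nonzero free boundary geodesic network in $B^2$ has mass at least $2$ (the diameter $P_2$ is the shortest such network — any segment meeting $\partial B^2$ orthogonally at both ends is a diameter, and the classification rules out shorter configurations); this forces $\omega_1(B^2)\ge 2$, hence $\omega_1(B^2)=2$. Then $2=\omega_1(B^2)\le\omega_2(B^2)\le 2$ gives $\omega_2(B^2)=2$. For part (ii): the upper bound gives $\omega_3(B^2)\le\omega_4(B^2)<3\sqrt2$, so the classification applies and $\omega_3(B^2),\omega_4(B^2)\in\{2,4\}$; the remaining point is to exclude the value $2$ for $p=3$. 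This is the main obstacle: one must show $\omega_3(B^2)>2$, i.e.\ that no $3$-sweepout can have all slices of mass $\le 2$, which follows from a Lusternik–Schnirelmann-type argument — if $\omega_3(B^2)=\omega_2(B^2)=2$ then the min-max varifold for $p=3$ would have to be $P_2$ with multiplicity, but $P_2$ and $2P_2$ have mass $2$ and $4$, and a density/multiplicity or index count (as in Marques–Neves, or the argument in \cite{Aiex}) shows the width must jump; concretely, a width realized by a network of mass $2$ can support at most a $2$-dimensional cohomological obstruction. Hence $\omega_3(B^2)=\omega_4(B^2)=4$, with the upper bound $\le 4$ reimproved from the classification value rather than from the crude $4.00267$.

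For parts (iii) and (iv): by Theorem~\ref{classif.geod.net.ellip}, for $E^2$ sufficiently $C^\infty$-close to $B^2$ (closeness depending on the mass bound $R<3\sqrt2$), every free boundary finite geodesic network of mass $<R$ is $P^E_2$, $\widetilde P^E_2$, or $P^E_2+\widetilde P^E_2$, with masses in $\{d,D\}$ for a single diameter and $\{2d,d+D,2D\}$ for a pair. Using a continuity/perturbation argument — the $p$-sweepouts $F_p$ transplant to $E^2$ (restrict the same algebraic varieties, noting $y^2$ is still excluded so no full ellipse appears) and their sup-masses vary continuously with the domain, while simultaneously $\omega_p(E^2)\to\omega_p(B^2)$ as $E^2\to B^2$ — one gets $\omega_p(E^2)<3\sqrt2$ for $E^2$ close to $B^2$, so the classification applies. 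Then $\omega_1(E^2)$ is the mass of the shortest network, namely the smallest diameter, so $\omega_1(E^2)=d$; $\omega_2(E^2)$ must be $\ge\omega_1(E^2)=d$ and, by the same LS jump as above combined with the $F_2$ upper bound (whose sup-mass is the length of the longest diameter $=D$ for $E^2$ close to $B^2$), one gets $\omega_2(E^2)=D$. Finally $\omega_3(E^2)\le\omega_4(E^2)$, both are realized by networks of mass $<3\sqrt2$ hence lie in $\{d,D,2d,d+D,2D\}$, and the jump $\omega_3(E^2)>\omega_2(E^2)=D$ excludes $d$ and $D$, leaving $\omega_3(E^2),\omega_4(E^2)\in\{2d,d+D,2D\}$. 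I expect the LS-type strict-inequality steps ($\omega_2>\omega_1$ cannot be improved to equality without care, and $\omega_3>\omega_2$) to be the delicate points; everything else is bookkeeping with the already-established classification and estimates.
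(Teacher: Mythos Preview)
Your overall plan is right and matches the paper: bound $\omega_p$ above by the explicit sweepouts $F_p$, bound it below via Corollary~\ref{approx.width} plus regularity (Theorem~\ref{theo.main.regularity}) plus the classification (Theorems~\ref{classif.geod.net.disc}, \ref{classif.geod.net.ellip}), and use a Lusternik--Schnirelmann step for the jumps. The gap is in how you propose to run that LS step.

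For $\omega_3(B^2)>2$ you correctly invoke LS, but your proposed mechanisms --- ``a density/multiplicity or index count'' and the claim that ``a width realized by a network of mass $2$ can support at most a $2$-dimensional cohomological obstruction'' --- are not valid arguments here: there is no Morse-index bound available for the $1$-varifolds produced by this min-max, and no general principle tying the mass of a critical varifold to the largest $p$ for which it can realize $\omega_p$. The paper's argument is concrete and elementary: place three pairwise disjoint closed disks $B_1,B_2,B_3$ of radius $0.4$ inside $\mathrm{int}(B^2)$. Any $3$-sweepout $\Phi$ of $B^2$ restricts to a $1$-sweepout of each $B_i$, and the standard Lusternik--Schnirelmann inequality (as in Guth) produces a slice whose mass is at least $\omega_1(B_1)+\omega_1(B_2)+\omega_1(B_3)=3\cdot 0.8=2.4>2$. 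Since the classification already pins $\omega_3(B^2)\in\{2,4\}$, this gives $\omega_3(B^2)=4$.

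For $E^2$ the paper again makes the LS step concrete rather than appealing to an abstract jump. To rule out $\omega_1(E^2)=D$ it packs two disjoint half-scale, $\pi/2$-rotated copies of $E^2$ inside $\mathrm{int}(E^2)$; if $\omega_1(E^2)=D$ then each has first width $D/2$, and LS gives $\omega_2(E^2)>D$, contradicting $\omega_2(E^2)\in\{d,D\}$. The same packing with $\omega_1(E^2)=d$ yields $\omega_2(E^2)>d$, hence $\omega_2(E^2)=D$. Your more direct route to $\omega_1(E^2)=d$ is fine and arguably cleaner, but you should state the upper bound explicitly rather than just asserting ``$\omega_1$ is the mass of the shortest network'': with the ellipse oriented so the minor axis is vertical, the sweepout $F_1$ by $\{x=c\}$ has $\sup\|F_1\|=d$, whence $\omega_1(E^2)\le d$; likewise $F_2$ (all affine lines) gives $\omega_2(E^2)\le D$. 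For (iv) you still need an actual reason why $\omega_3(E^2)$ cannot be $d$ or $D$; the paper handles this by continuity of the sweepout bounds as $E^2\to B^2$ (so $\omega_3(E^2),\omega_4(E^2)$ stay in a window near $4$ disjoint from $\{d,D\}$), not by an abstract strict-inequality principle.
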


\begin{proof}
	
$(i)$ Let $p=1,2$ and take the $p$-sweepout $F_p\in \mathcal{P}_p(B^2).$ By Lemma \ref{estimate.length} we know that $\|F_p([Q])\|\leq 2$ for all $[Q] \in \mathbb{RP}^p,$ thus $\omega_1(B^2), \omega_2(B^2)\leq 2.$ Now, given $\epsilon>0$ we can find  by the Corollary \ref{approx.width} a special varifold $V$ such that $0<\omega_p(B^2)\leq \|V\|(B^2)\leq \omega_p(B^2)+\epsilon\leq 2+\epsilon.$ By Theorems \ref{theo.main.regularity}
 and  \ref{classif.geod.net.disc} we actually have that $V$ is a diameter of $B^2$ and $\|V\|(B^2)=2.$ Therefore, $\omega_1(B^2)=\omega_2(B^2)=2.$

$(ii)$ We observe that $\omega_3(B^2)>2$ as a consequence of the Lusternik-\-{Schni}\-{rel}\-{mann} theory (see for instancy Guth \cite{Guth}, p. 1923-24). Indeed, we can take three disjoint closed balls $B_i$ in $B^2 \backslash \partial B^2$ with radius $0.4$ each ball. Each $3$-sweepout $\Phi$ of $B^2$ is also an $1$-sweepout of $B^2,$ in particular it is an $1$-sweepout of each $B_i.$ The Lusternik-Schnirelmann theory says that $\Phi$ contains a cycle such that its mass is at least the sum of the first width of each $B_i.$ By the item (i) above we know that the first width of a ball is equal to the diameter of that ball, so $\omega_3(B^2)\geq 3 \times 0.8>2.$ Using Theorem \ref{classif.geod.net.disc} we get that $\omega_3(B^2)\geq 4$ (two diameters). Now, Lemma \ref{estimate.length} says that $4\leq\omega_3(B^2), \omega_4(B^2)< 4.003,$ and so by Corollary \ref{approx.width}, Theorems \ref{theo.main.regularity} and  \ref{classif.geod.net.disc}, we actually have that $\omega_3(B^2), \omega_4(B^2)= 4.$

$(iii)$ For $p=1,2$ and $E^2$ close to $B^2,$ we deduce by continuity and Lemma \ref{estimate.length} that $\omega_p(E^2)\leq 2+\delta$ for some small $\delta>0.$ Therefore, by Corollary \ref{approx.width},  Theorems \ref{theo.main.regularity}
 and \ref{classif.geod.net.ellip}, we conclude that the only possible values for $\omega_1(E^2)$ and $\omega_2(E^2)$ are $d$ or $D.$  Suppose that $\omega_1(E^2)=D$ for some $E^2.$ As $a^2\leq 2 b^2$ (notation in the proof of Theorem \ref{classif.geod.net.ellip}), we can take two small ellipses defined by scaling $E^2$ by half, taking a $\pi/2$ rotation and translating the variable $x$ by $+b/2$ and $-b/2,$ respectively. These two ellipses are inside of $\mbox{int}(E^2)$ and, as $\omega_1(E^2)=D,$ we have that the 1-width of each small ellipse is equal to $D/2.$ Using the Lusternik-Schnirelmann theory as before and the fact these two ellipses are inside of $\mbox{int}(E^2)$, we conclude that $\omega_2(E^2)>D/2+D/2=D,$ which contradicts the fact that $\omega_1(E^2), \omega_2(E^2)\in \{d, D\}.$ So,  $\omega_1(E^2)=d.$ Applying the same argument for $\omega_1(E^2)=d,$ we get that  $\omega_2(E^2)>d.$ Therefore, $\omega_2(E^2)=D.$
 
$(iv)$ We use again the continuity, Corollary \ref{approx.width},  Theorems \ref{theo.main.regularity}
 and \ref{classif.geod.net.ellip} to conclude that the only possible values to $\omega_3(E^2)$ and $\omega_4(E^2)$ are $2d, d+D$ or $2D.$
\end{proof}

%
%
%

%
%

\textbf{Acknowledgements}: I would like to deeply thank to Professor F. Marques for suggesting me to work on this problem and your support while visiting him at Princeton University. I am very thankful to Professors F. Vit\'{o}rio (PhD adviser), R. Montezuma and T.  Rivi\`{e}re by comments and suggestions. Also I am grateful to Department of Mathematic of Princeton University by its hospitality and where part of this work was done.

\section{Appendix}
\label{appendix}

In this appendix we prove the following result:

\begin{theorem} \label{comprimento.parabola}
	Let $L_0$ the maximum length of a parabola restricted to $B^2$ and $L_1$ the maximum length of a hyperbola restricted to $B^2.$ Then $L_1<L_0 \approx 4.00267.$ Moreover, there exists a unique parabola $\mathcal{P}_0$ such that $L(\mathcal{P}_0 \cap B^2)=L_0.$
\end{theorem}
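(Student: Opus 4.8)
The plan is to reduce the problem to a finite-dimensional optimization over the family of parabolas and the family of (single-branch) hyperbolas meeting $B^2$, and then to analyze each family by elementary calculus. First I would normalize: by rotation and translation we may assume a parabola has the form $y = ax^2 + c$ with $a > 0$ (the degenerate cases $a=0$ give lines, already bounded by $2 < L_0$), so the family is parameterized by the two real numbers $(a,c)$, subject to the constraint that the curve actually meets $\overline{B_1(0)}$. The length of the arc of $y=ax^2+c$ inside $B^2$ is then
\[
\ell(a,c) = \int_{I(a,c)} \sqrt{1 + 4a^2x^2}\,dx,
\]
where $I(a,c) \subset \R$ is the (possibly disconnected, if $c<-1$) set of $x$ with $a^2x^4 + (2ac+1-\text{stuff})\dots \le 1$; concretely $x^2 + (ax^2+c)^2 \le 1$. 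The domain of admissible $(a,c)$ is closed once we add the boundary cases, and one checks $\ell$ is continuous on it and tends to the line bound $2$ as $a \to 0$, and to $0$ or a line-type bound as $|c|\to\infty$; hence the supremum $L_0$ is attained at an interior critical point, which is the parabola $\mathcal{P}_0$. I would then set up the two equations $\partial_a \ell = 0$, $\partial_c \ell = 0$ — differentiating under the integral sign, the boundary terms from $\partial I(a,c)$ cancel because the integrand there is the chord-length density and the endpoints lie on $\partial B^2$ — and solve them numerically, obtaining $a_0, c_0$ and the value $L_0 \approx 4.00267$. Uniqueness of $\mathcal{P}_0$ would follow from showing the critical-point system has a unique solution in the admissible region, e.g. by a monotonicity/convexity argument in one variable after eliminating the other, or simply by a careful interval-arithmetic check that the Hessian is negative definite along the critical set.

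For the hyperbola estimate I would argue that $L_1 < L_0$ without a second independent optimization, by a comparison. A hyperbola with only one branch meeting $B^2$ can, near that branch, be approximated arbitrarily well from outside by an osculating parabola: given the branch, pick the point $P$ where the branch is ``deepest'' inside $B^2$ and compare the hyperbola arc with the parabola having the same tangent and curvature at $P$. Since a conic with one branch inside a convex body is, up to affine normalization, flatter than the comparison parabola in the relevant window (the hyperbola's curvature decays faster along the arm than the parabola's), the hyperbola arc inside $B^2$ is shorter than that of a suitable parabola inside $B^2$, hence $\le L_0$; strictness comes from the fact that equality in the comparison would force the two conics to coincide, which is impossible. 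Alternatively, and perhaps more robustly, I would just run the same two-parameter calculus directly on the normalized single-branch hyperbola family $y^2/\beta - x^2 = 1$ after rotation/translation (again two essential parameters), show the supremum is attained, and check numerically that its value is strictly below $4.00267$; the parabola then wins.

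The main obstacle I expect is the optimization over parabolas: writing $\ell(a,c)$ explicitly requires solving the quartic $x^2 + (ax^2+c)^2 = 1$ for the integration limits, and although the incomplete integral $\int \sqrt{1+4a^2x^2}\,dx = \tfrac{x}{2}\sqrt{1+4a^2x^2} + \tfrac{1}{4a}\operatorname{arcsinh}(2ax)$ is elementary, the resulting transcendental system for the critical point is not solvable in closed form, so the value $4.00267$ is genuinely a numerical constant and establishing it rigorously needs a verified numerical bound (interval arithmetic) rather than a symbolic identity. The second obstacle is making the hyperbola-vs-parabola comparison airtight; if the clean comparison argument resists being made rigorous, the fallback is the direct two-parameter numerical optimization for the hyperbola family, which is routine but tedious. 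I would present the parabola computation in full detail (it is where the sharp constant lives and where the ``unique $\mathcal{P}_0$'' claim is proved) and handle the hyperbola by whichever of the two routes is cleaner, since only the inequality $L_1 < L_0$ — not the exact value of $L_1$ — is needed for Lemma~\ref{estimate.length}.
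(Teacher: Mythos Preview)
Your strategy---parameterize, argue existence by compactness, solve the critical-point system numerically---is reasonable in outline, but the paper takes a substantially different and more geometric route that avoids most of the difficulties you flag. The paper first uses a sequence of translation arguments to show that for \emph{any} parabola or single-branch hyperbola $\gamma$, one can move $\gamma$ so that $L(\gamma\cap B^2)$ only increases and in the end the axis of symmetry is the $y$-axis with the vertex at $(0,-1)$, tangent to $\partial B^2$. This reduces the parabola family to the \emph{one}-parameter family $P_a(x)=ax^2-1$ with $a>1$, and the length $L(a)$ is then computed in closed form; the derivative $L'(a)$ is shown to have a unique zero $a_0$ by proving an explicit auxiliary function is strictly monotone, which gives both the value $L_0\approx 4.00267$ and the uniqueness of $\mathcal{P}_0$ without any Hessian or interval-arithmetic argument. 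For the hyperbola, once both curves are in this canonical position and pass through the same two boundary points $A,D$ and the vertex $V=(0,-1)$, the paper observes that the hyperbola graph lies \emph{above} the parabola graph on $[x(A),x(D)]$ (they meet only at $A,D,V$ and the parabola eventually dominates at infinity), so its arc inside $B^2$ is strictly shorter; this is a one-line comparison rather than a second optimization.

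Your two-parameter plan has a few genuine soft spots that the paper's reduction sidesteps. First, your compactness argument is incomplete: as $a\to\infty$ (with $c$ chosen well) the parabola degenerates to two diameters and $\ell\to 4$, so existence of a maximizer is not automatic---you would need to know a priori that $\sup\ell>4$, which is exactly the delicate point. Second, the claim that the boundary terms in $\partial_a\ell,\partial_c\ell$ ``cancel because the endpoints lie on $\partial B^2$'' is not correct as stated; those terms encode how the intersection endpoints slide along $\partial B^2$ and do not vanish in general. Third, proving uniqueness of a critical point for a two-variable transcendental system is much harder than the one-variable monotonicity the paper uses; your suggested Hessian/interval check would work but is far less clean. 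Finally, your hyperbola-versus-parabola comparison via osculating parabolas and ``curvature decays faster'' is vague and would need real work; the paper's comparison (same $A,D,V$, graph ordering) is both simpler and rigorous. If you want to keep your framework, the single most useful step to import is the translation argument forcing the vertex to $(0,-1)$: that collapses everything to one parameter and makes the rest routine.
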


In Rack \cite{Rack} was proved that $L_0\approx 4.00267.$ Since  we do not have direct access to \cite{Rack}, we give a geometric proof, and in our case we include the estimate of $L_1.$

The length of a real algebraic curve $C$ restricted to $B^2$ can be bounded in terms of its degree using the Cauchy-Crofton Formula (see Lemma \ref{lemma.concentration.mass}). In fact, if that curve has degree $d,$ then it intersects a straight line at most $d$ times, so  by the Cauchy-Crofton Formula the length of $C$ restricted to $B^2$ is at most $d \cdot \mbox{area}(B^2)=\pi d.$ Obviously, this upper bound is not sharp. For example, if $d=1$ we have that $C$ is a straight line and the length of the intersection of a straight line with $B^2$ is at most 2. It is intuitive, and it was conjectured in Guth \cite[p. 1974]{Guth}, that the general sharp upper bound is similar to the case $d=1,$ that is, $L(C \cap B^2)\leq 2d$ for all $d \in \mathbb{N}.$ Contradicting that result, for $d=2$ we can find $C$ such that $L(C\cap B^2)>4.$ Our counterexample is the parabola $\mathcal{P}_0$ from the above theorem.

\vspace{0.3cm}

 \hspace{-0.47cm}\textit{Proof of Theorem \ref{comprimento.parabola}.} By the proof of Lemma \ref{estimate.length}, we know that if $\gamma$ is a hyperbola such that each branch intersects $B^2,$ then $L(\gamma \cap B^2) \leq 4.$ Thus, from now consider $\gamma$ a branch of a hyperbola or a parabola. We choose an orientation such that the axis of symmetry of that curve is orthogonal to $x$-axis, and such that $\gamma$ is convex downward. So, $\gamma$ is a function of $x$ with a global minimum at the vertex $V,$ it is strictly increasing for $x>x(V),$ and strictly decreasing for $x<x(V).$ Moreover, the curvature  is strictly increasing in the direction of the axis of symmetry, and there exist at most four points in the intersection $\gamma \cap \partial B^2.$ We fix a such curve $\gamma$ such that $L(\gamma \cap B^2)>0,$ and by translation we find the positions such that the length $L(\gamma \cap B^2)$ increases, next we change the parameters of that curve to get the maximum of $L(\gamma \cap B^2).$

 As $\gamma$ is convex downward and $\partial B^2 \cap \{(x,y) \in \mathbb{R}^2 : y >0\}$ is convex upward, we conclude that there exist at most two points $A, D \in \gamma \cap \partial B^2$ such that $y(A), y(D)>0.$ So, consider two cases: there exist two points $A, D \in \gamma \cap \partial B^2$ such that $y(A), y(D)>0;$ or there exists at most one such point. In the first case, as in the examples of the Fig. \ref{first.estimate} (a), take $B=(x(A), -y(A)), C=(x(D), -y(D))\in \partial B^2,$  and the circular arc $\wideparen{BC}.$ The length $L(\gamma \cap B^2)$  is at most $|\overline{AB}|+|\wideparen{BC}|+|\overline{CD}|.$ Let $\alpha$ (resp. $\beta$) be the angle between $\overline{OA}$ (resp. $\overline{OD}$) and $x$-axis for $\alpha, \beta \in (0, \pi/2],$ then
\begin{eqnarray*} 
	L(\gamma \cap B^2)\leq |\overline{AB}|+|\overline{CD}|+|\wideparen{BC}|\leq 2 \sin(\alpha)+2 \sin(\beta)+\pi-(\alpha+\beta).  
\end{eqnarray*}


\begin{figure}[ht]

\begin{center}

\includegraphics[trim=75 570 205 80,clip,scale=1]{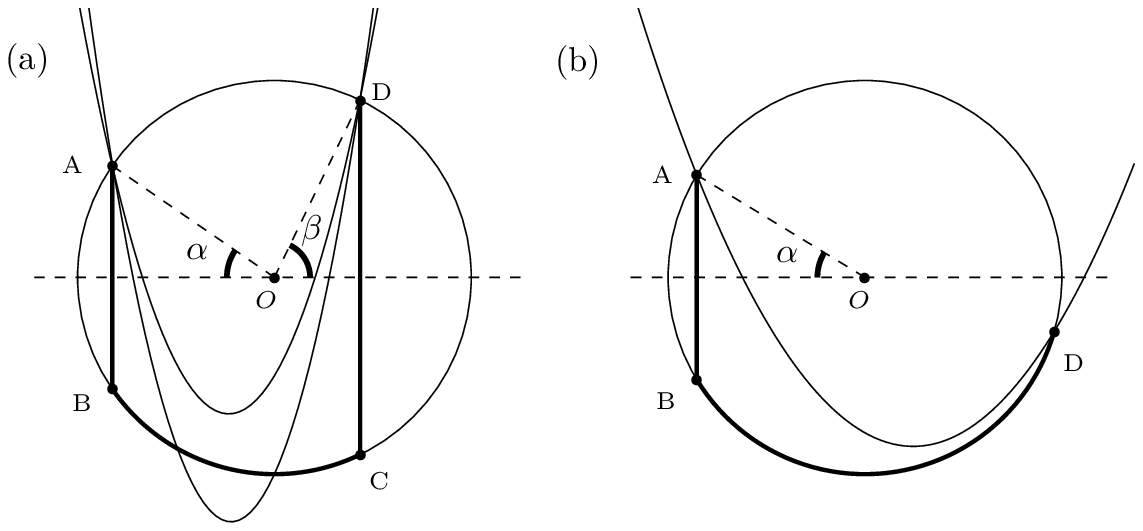}

\caption{}
    \label{first.estimate}
	
\end{center}
\end{figure}

In the second case, as in the example of the Fig. \ref{first.estimate} $(b),$ where do not exist $A$ or $D$ as in the first case, we take $\alpha=0$ or $\beta=0$ in the above estimate, respectively. Without loss of generality suppose $\beta=0.$ As $\alpha \in [0,\pi/2],$ we get
\begin{eqnarray} \label{primeira.estimativa1}
L(\gamma \cap B^2)\leq 2 \sin(\alpha)+\pi-(\alpha) \leq 2 \frac{\sqrt{3}}{2}+\frac{2 \pi}{3} <4.
 \end{eqnarray}

So, from now consider that there exist two points at the intersection between $\gamma \cap B^2$ and the upper half plane $\mathcal{H}^+:=\{(x,y) \in \mathbb{R}^2 : y >0\}.$

Suppose that $V \notin B^2$ and $\gamma \cap B^2$ is connected. Let $A \in \partial B^2$ be first point of contact between $\gamma$ and $B^2,$ and $D \in \partial B^2$ be last point of contact, $x(A)<x(D).$ Thus, $\gamma \subset B^2$ between the points $A$ and $D.$ In particular, $x(V)<x(A)$ or $x(V)>x(D).$ Because of symmetry, we can assume without loss of generality that $x(V)<x(A).$ So $\gamma \cap B^2$  is strictly increasing, in particular $y(A)<y(D).$ Note that $\gamma$ is contained within the triangle $\triangle ADE$ between the points $A$ and $D,$ where $E=(x(D), y(A)).$ Also, by the conditions on the points $A$ and $D,$ we see that the intersection $\triangle ADE \cap \gamma \cap B^2 \cap \mathcal{H}^+$ is empty, or is the point $D,$ or are the points $A$ and $D.$ Therefore, by the assumption of the previous paragraph, we necessarily have that $y(A)>0, y(D)>0$ (see Fig. \ref{vertice.fora.conexo} (a)). Let $\beta$ be the counterclockwise angle between the $x$-axis and $\overline{OD},$ and let $\alpha$ be the angle between $\overline{OA}$ and the $x$-axis. Note that $\alpha \in (0, \pi/2),$ $\beta \in (0,\pi).$ As $L(\gamma\cap B^2)$ is bounded by $|\overline{AE}|+|\overline{DE}|,$ we get
\begin{eqnarray*}
	 L(\gamma \cap B^2) \leq |\overline{AE}|+|\overline{DE}|&=&\cos(\alpha)+\cos(\beta)+\sin(\beta)-\sin(\alpha) \\
	 &=& (\cos(\alpha)-\sin(\alpha))+(\cos(\beta)+\sin(\beta))<3.
\end{eqnarray*}

\begin{figure}[ht]

\begin{center}

\includegraphics[trim=83 565 185 75,clip,scale=1]{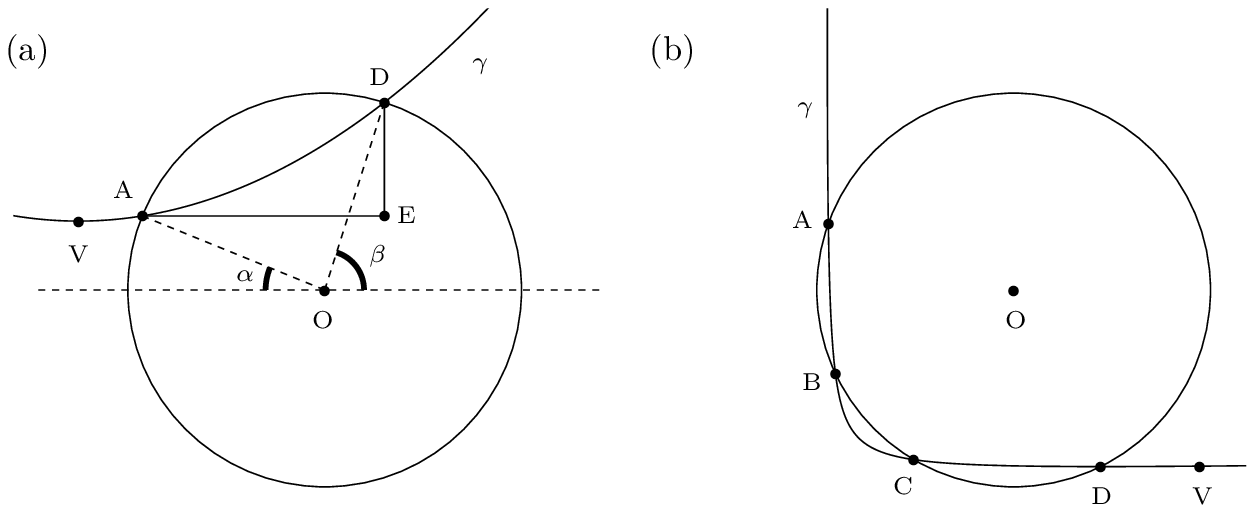}

\caption{}
   \label{vertice.fora.conexo}
	
\end{center}
\end{figure}

In the following arguments we see that to get the maximum length of $\gamma \cap B^2$ by translations, it is necessary that $V \in B^2.$
 
  Suppose now that $V \notin B^2$ and $\gamma \cap B^2$ is not connected. As the intersection $\gamma \cap \partial B^2$ has at most four points, $\gamma \cap B^2$ has at most two connected components $\gamma_1$  and $\gamma_2$ such that $L(\gamma_1)>0$ and $L(\gamma_2)>0.$ If there exists only one such connected component, which goes inside $B^2$ at $A \in \partial B^2$ and it goes outside $B^2$ at $D \in \partial B^2,$ then we can use the last estimate if $y(A), y(D)>0,$ or the estimate (\ref{primeira.estimativa1}) in all other cases to get that $L(\gamma \cap B^2)<4.$ If there exist two such connected components, then there exist two points $A, C \in \partial B^2$ where the curve $\gamma$ goes inside $B^2,$ and two points $B, D \in \partial B^2$ where the curve goes outside $B^2.$ Supposing that $x(A)<x(B)<x(C)<x(D),$ we claim that $x(B)<x(V)<x(C).$ Otherwise, as $V \notin B^2,$ we have that $x(V)<x(A),$ or $x(D)<x(V).$ By symmetry, it is enough to verify that the second inequality cannot be true (see Fig. \ref{vertice.fora.conexo} (b)). Indeed, as $\gamma$ goes outside $B^2$ at $B$ and goes inside $B^2$ at $C,$ there exists $x' \in (x(B),x(C))$ such that $\kappa_{\gamma}(x')>\kappa_{\partial B^2}=1.$ In \-{par}\-{ti}\-{cu}\-{lar,}  $\kappa_{\gamma}(x)>1$ for all $x \in [x', x(V)),$ since the curvature $\kappa_{\gamma}(x)$ of the $\gamma(x)$ is increasing for $x<x(V),$ this contradicts the fact that $\gamma$ goes inside $B^2$ at $C$ and going outside $B^2$ at $D,$ since  $x(D)<x(V)$ and  $\kappa_{\partial B^2}=1.$ Therefore, $x(B)<x(V)<x(C),$ also $y(A)>y(B)$ and $y(C)<y(D).$ As we are supposing that $\gamma \cap B^2 \cap \mathcal{H}^+$ has two points, it is not difficult to see that $y(A), y(D)>0$ and $y(B), y(C)<0.$ It follows as in the previous triangle argument. Now we have (by symmetry) two cases: $\overline{CD}$ is on the left of $O;$ or $\overline{AB}$ is on the left of $O,$ and $\overline{CD}$ is on the right of $O$ (see Figs. \ref{ontheleft} and \ref{no.meio}). Here, a segment is said on the left of $O$ (resp. on the right of $O$) if it intersects the $x$-axis for $x \leq 0$ (resp. $x>0$).


  \begin{figure}[ht]

\begin{center}

\includegraphics[trim=80 570 185 75,clip,scale=1]{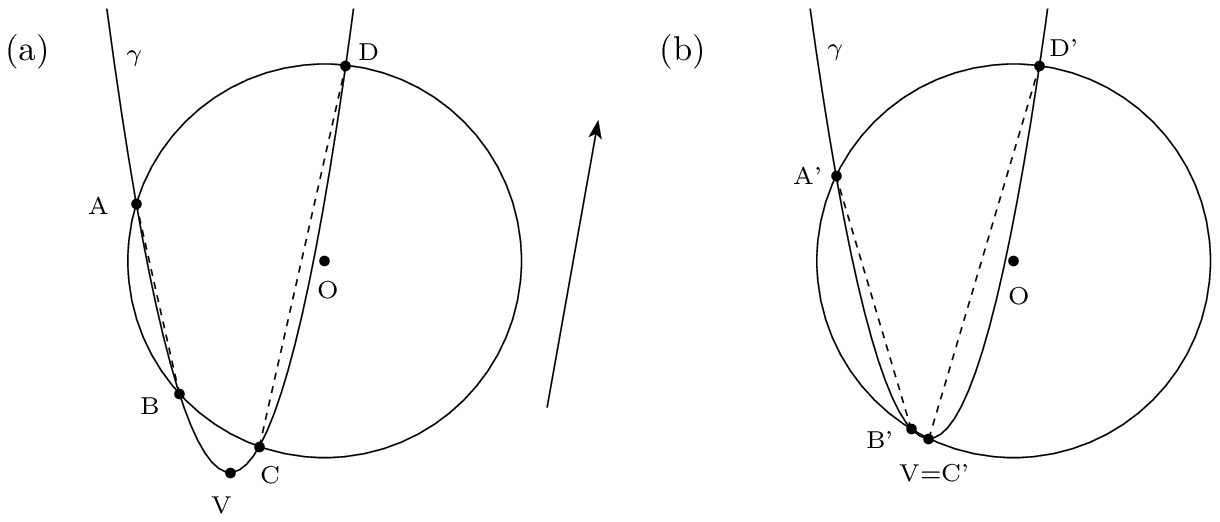}

\caption{}
   \label{ontheleft}
	
\end{center}
\end{figure}

In the first case, we have necessarily (see Fig. \ref{ontheleft} (a)):
\begin{eqnarray} \label{propriedades.pontos}
	y(D)>y(A)>0,  \ y(B), y(C)<0 \quad \mbox{and} \quad x(A)< x(B)< x(C)<0.
\end{eqnarray}
Taking a short translation of $\gamma,$ we get news points $A', B', C', D' \in \gamma \cap \partial B^2.$ To keep the properties (\ref{propriedades.pontos}) for the news points, we translate $\gamma$ such that $V \rightarrow C',$ and $|\overline{C'D'}|=|\overline{CD}|$ is constant during the translation. This is possible considering the following map: 
$$\mathcal{F}: C'=\gamma(x) \ \mapsto \ D' \in \gamma.$$
Where $x\geq x(V)$ and $D'$ is chosen such that $x(D')\geq x(C'),$ $|\overline{C'D'}|=|\overline{CD}|.$ Let's see that the above map is well defined with respect the choice of $D'.$ Indeed, as $\gamma$ is increasing for $x\geq x(V)$ and $|(x, \gamma(x))|\rightarrow \infty $ as $x \rightarrow \infty,$ the choice of $D'$ always exists and it is unique. This monotonicity also implies that $\mathcal{F}$ is one-to-one. Consider $\mbox{Dom}(\mathcal{F}):=\{\gamma(x): x\in [x(V), x(C)\}$ and $\mbox{Im}(\mathcal{F}):= \{\gamma(x): x\in [x(D_0),x(D)]\},$ where $D_0$ is the unique point in $\gamma$ such that $|\overline{VD_0}|=|\overline{CD}|$ and $x(D_0) \geq x(V).$ Again, the monotonicity of $\gamma$ implies that $\mathcal{F}$ is onto. Moreover, by the continuity of the distance function, we have that $\mathcal{F}: C' \mapsto D'$ is continuous. Take $E'$ the unique point such that $|\overline{E'C'}|=|\overline{E'D'}|=1$ and $y(E')\leq y,$ for all $y \in \overline{C'D'}.$ Consider the unity disk through the points $ C'$ and $D'$ with center at $E'.$ So we can take the inverse function of $\mathcal{F}$ and move continuously the unity disk such that $C' \rightarrow V,$ keeping $|\overline{C'D'}|=|\overline{CD}|.$ Note that $E'$ moves continuously to down and to left, since $\gamma$ is \-{in}\-{crea}\-{sing}. This is \-{equi}\-{va}\-{lent} to translate continuously $\gamma$ such that $V \rightarrow C',$ keeping $|\overline{C'D'}|=|\overline{CD}|,$ the curve gamma moves continuously to up and to right, and $\overline{C'D'}$ keeps on the left of $O.$ The last two implies that $y(A'), x(A'), x(B')$ increase, $y(B')$ decreases, and the properties (\ref{propriedades.pontos}) still hold during the move. Indeed, this is obvious for a short move and it holds during the translation because $\overline{C'D'}$ keeps on the left of $O,$ which implies that $y(A')<1, x(A'), x(B'), x(C'), y(B')<0.$ Also, as $x(A)'<x(B')<x(V),$ we have that $y(C')<0.$ By the triangle argument  and the fact that $\overline{C'D'}$ keeps on the left of $O,$ we see that $\gamma\big|_{C'}^{D'} \subset B^2$ during the translation. Also, $A', C', D'$ are distinct and $V \notin B^2$ during the move, thus there exists exact more one point $B' \in \gamma \cap \partial B^2.$  In particular, $\gamma\big|_{A'}^{B'} \subset B^2$ during the translation. Here we use the fact that $A'$ cannot be tangent to $\partial B^2,$ since $x(A')<0,$ $y(A')>0$ and the monotonicity properties of $\gamma.$

Note that $|\overline{A'B'}|$ is increasing, $B'$ is approaching to $V,$ and then $L(\gamma)\big|_{A'}^{B'}$ is increasing. We also note that $L(\gamma)\big|_{C'}^{D'}$ increases because $C'$ is approaching to $V,$ and then the curvature of $\gamma$ is increasing between $C'$ and $D'.$ In the end of the translation we get that $L(\gamma\cap B^2)$ increases, and $V\in B^2$ (see Fig. \ref{ontheleft} (b)).

In the second case, we just know that $x(A), y(B), y(C)<0$ and $y(A), y(D), $ $ x(D)>0$ (see Fig. \ref{no.meio} (a)). In this case, we take a short translation to up of $\gamma$ to get news points $A', B', C', D'$ with the same previous properties. We take this translation as long as $\overline{A'B'}$ and $\overline{C'D'}$ do not pass through the origin $O,$ and $V \notin \partial B^2.$ In particular, the last properties hold for the news points. Note that $y(A'), y(D')$ increase and $B', C'$ are approaching to $V,$ so we have that $|\overline{A'B'}|, |\overline{C'D'}|$ increase and therefore $L(\gamma)\big|_{A'}^{B'}$ and $L(\gamma)\big|_{C'}^{D'}$ are increasing, since the curvature of $\gamma \cap B^2$ is increasing. We stop this translation when $V$ touches $\partial B^2,$ or when $\overline{A'B'}$ or $\overline{C'D'}$ pass through the origin, in the last case we continue with the translation as in the previous case until $V$ touches $\partial B^2.$ In the end, we get again that $L(\gamma\cap B^2)$ increases and $V\in B^2$  (see Fig. \ref{no.meio} (b)).  


  \begin{figure}[ht]

\begin{center}

\includegraphics[trim=80 555 185 75,clip,scale=1]{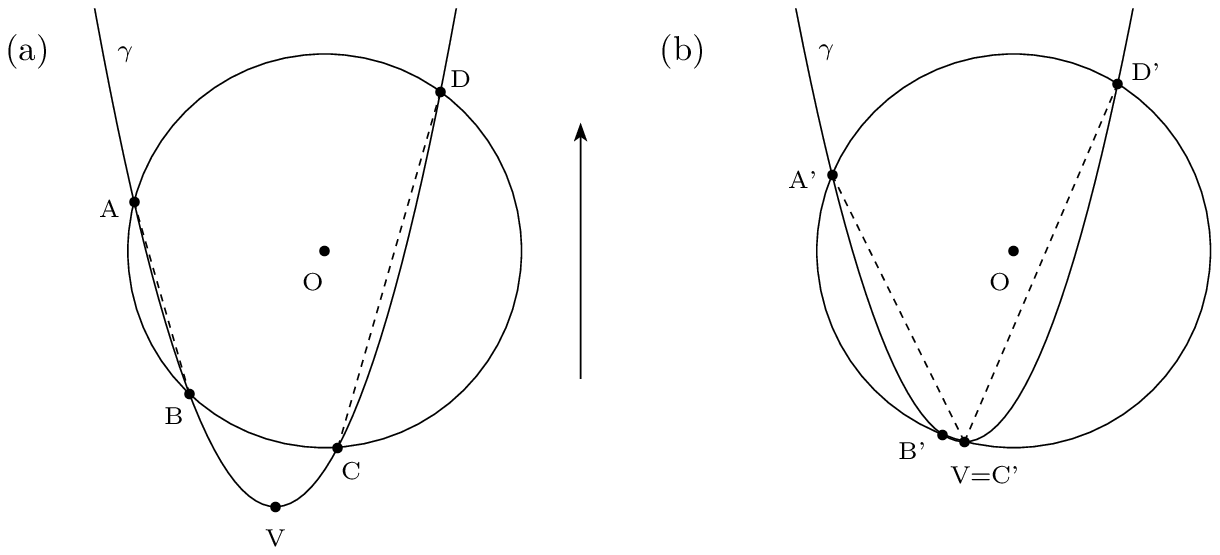}

\caption{}
  \label{no.meio}
	
\end{center}
\end{figure}

By the above arguments we simplify our analysis to the case  such that there exist two points $A, D \in \gamma \cap \partial B^2$ with $y(A), y(D)>0,$ and $V \in B^2.$ If we suppose that $x(A)<x(D),$ then by the triangle argument we have that $x(A)<x(V)<x(D).$
For a fix $\gamma$ in this case, the maximum length of $\gamma \cap B^2$ is reached when $x(V)=0,$ that is, when we translate $\gamma$ horizontally such that $x(V)\rightarrow 0.$ Indeed, suppose for now that $\gamma \cap B^2$ is connected and consider a such  translation so that $x(V)\rightarrow 0$ and $A \rightarrow A', D \rightarrow D'.$  If $x(D)<0,$ then during the translation and as long as $x(D')<0,$ we have that the curve goes inside $B^2,$ and in particular $L(\gamma\cap B^2)$ increases (see Fig. \ref{parabola.simetria} (b)). Now, consider the case $x(D)\geq 0$ and $x(A)<0$ as in the Fig. \ref{parabola.simetria} (a). In this case, let $|x(V)|=\epsilon>0.$ Take $E=(-(x(D)+\epsilon), y(D)), F=(-x(D), y(D)), G=(x(A)+\epsilon, y(A)), H=(x(A'),y(D)),$ and  $I=(x(A'),y(A)),$ we claim that $|\overline{EH}|<|\overline{IG}|.$ To see this, we take   the tangent line $r(x)$ to $\gamma(x)$ at $A',$ and we take $J=(r^{-1}(y(D)), y(D)), L=((r^{-1}(y(A)), y(A)).$ Note that $\overline{HI}$ is orthogonal to $\overline{JF}$ and to $\overline{AG},$ also $E, H \in \overline{JF},$ $I, L \in \overline{AG}$ and $\epsilon=|\overline{EF}|=|\overline{AG}|$ (see Fig. \ref{parabola.simetria} (a) and \ref{parabola.simetria.ampliada} (a)). As $|\overline{EF}|=|\overline{AG}|,$ if $|\overline{EH}|\geq|\overline{IG}|$ then we would have $|\overline{HF}| \leq |\overline{AI}|$ and, therefore, $|\overline{HA'}|<|\overline{A'I}|,$ because $A' \in B^2$ and $x(A')<0, y(A')>0.$ Using this and the fact that $\gamma$ is convex we see that $|\overline{EH}|<|\overline{JH}|<|\overline{IL}|<|\overline{IG}|,$ which is a contradiction. Let $\overline{\gamma}$ be the curve $\gamma$ after the translation. The inequality $|\overline{EH}|<|\overline{IG}|$ means that $L(\overline{\gamma})\big|_{A'}^{E}< L(\overline{\gamma})\big|_{A'}^{G},$ since the curvature of $\gamma$ is strictly increasing in the direction of the vertex $V.$ Thus, the length of $\gamma\cap B^2$ increases after the translation $A \rightarrow A',$ $D \rightarrow D', V \rightarrow V'$ because $L(\overline{\gamma})\big|_{E}^{A'}$ is the amount of the curve that goes outside $B^2,$ and $L(\overline{\gamma})\big|_{A'}^{G}$ is the amount that goes inside $B^2.$ Here, we are using the fact that $\overline{\gamma} \cap B^2$ has at most four points, the symmetric of $\overline{\gamma},$ $y(A')=y(D')>0$ and the fact that $V' \in B^2$ to conclude that $\overline{\gamma}\big|_{A'}^{V'} \subset B^2$ and $\overline{\gamma}\big|_{V'}^{D'} \subset B^2.$


\begin{figure}[ht]

\begin{center}

\includegraphics[trim=70 570 190 75,clip,scale=1]{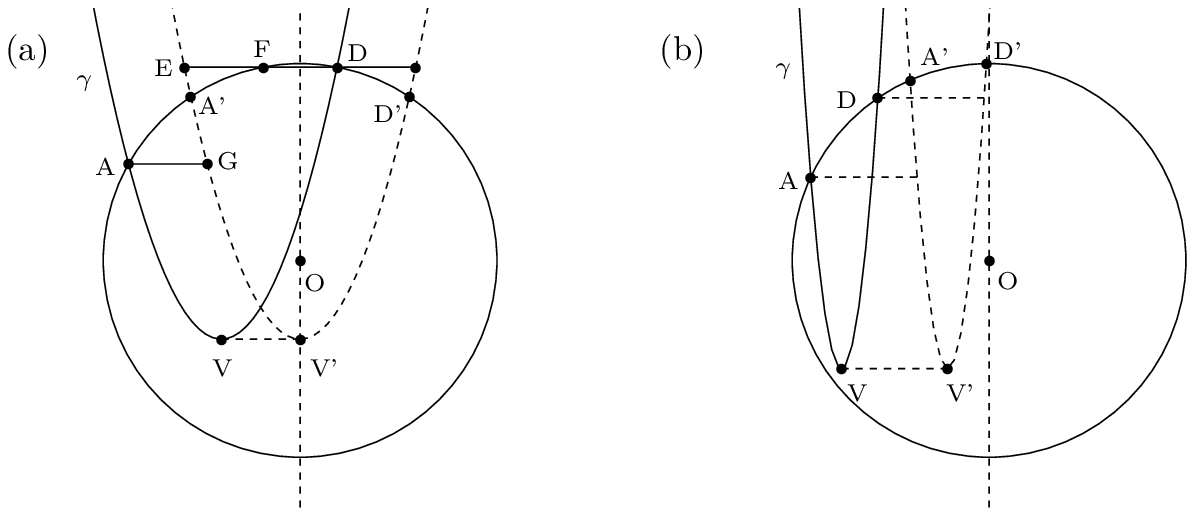}

\caption{}
  \label{parabola.simetria}
	
\end{center}
\end{figure}

\begin{figure}[ht]

\begin{center}

\includegraphics[trim=80 545 178 75,clip,scale=1]{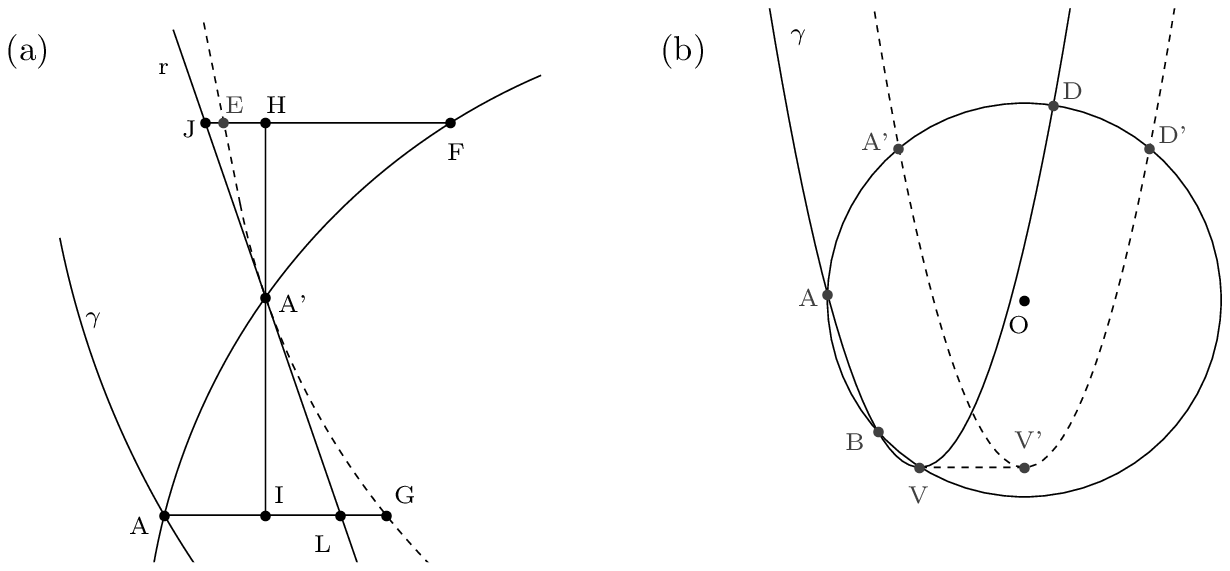}

\caption{}
  \label{parabola.simetria.ampliada}
	
\end{center}
\end{figure}

For the case $\gamma \cap B^2$ not connected, we have similarly that the length of $\gamma \cap B^2$ also increases after the above translation $x(V) \rightarrow 0.$ Indeed, as before we are supposing that there exist $A, D \in \partial B^2$ such that $y(A), y(D)>0$ and $V \in B^2.$ In particular, $x(A)<x(V)<x(D)$ if $x(A)<x(D).$ In this case it is enough suppose that there exist two connected components and the points $A, B, C, D \in \gamma \cap B^2$ such that $\gamma$ is inside $B^2$ between $A$ and $B,$ and between $C$ and $D;$ otherwise it is outside. Note that during the translation $y(A)$ increases. In the end, we get new points $A', D'$ such that  $y(D')=y(A')>0,$ and $V'\in B^2,$ since the vertex $V$ is the global minimum of $\gamma.$ Finally, as before $\overline{\gamma}\big|_{A'}^{V'} \subset B^2$ and $\overline{\gamma}\big|_{V'}^{D'} \subset B^2.$ In particular,  $L(\overline{\gamma} \cap B^2)$ increases, since the curve goes inside $B^2$ for $\overline{\gamma}(x)< y(A),$ and the previous paragraph for $\overline{\gamma}(x)\geq y(A)$ (see Fig. \ref{parabola.simetria.ampliada} (b).)

By the last two paragraphs, we need to find an upper bound for $L(\gamma \cap B^2),$ when $x(V)=0,$  $\gamma \cap B^2$ is connected, and $\{\gamma \cap B^2\} \backslash \{V\}$ is given by two points $A,D$ such that $-x(A)=x(D),$ and $y(A)=y(D)>0.$ In this situation, if $y(V)>-1,$ we can translate $\gamma$ to down such that $y(V)\rightarrow -1,$ then $L(\gamma \cap B^2)$ increases as long as $y(A')=y(D')>0.$ So, we consider the last hypothesis above with $V'=(0,-1),$ in other words, the curve is tangent to $\partial B^2$ at $V'$ (see Fig. \ref{parabola.final} (a)). Here we are using that the length $L(\gamma \cap B^2),$ for $V'=(0,-1)$ and $\gamma$ passes through $A', D'$ is bigger than the length of $L(\widetilde{\gamma} \cap B^2),$ if $\widetilde{\gamma}$ passes through $A', D'$ and $V \in \mbox{int}(B^2).$


\begin{figure}[ht]

\begin{center}

\includegraphics[trim=80 573 177 75,clip,scale=1]{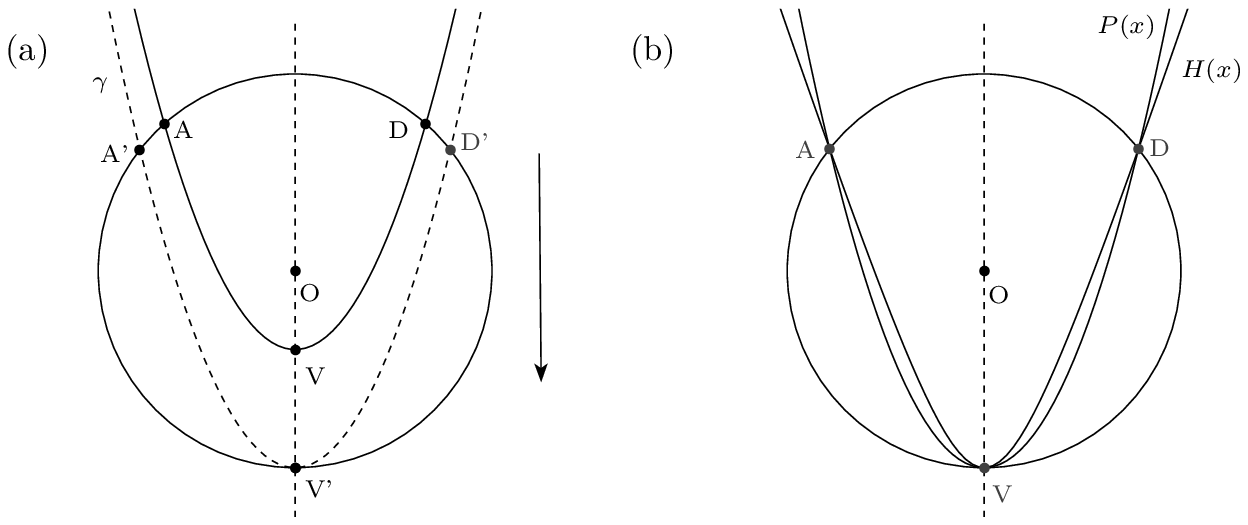}

\caption{}
 	\label{parabola.final}
	
\end{center}
\end{figure}

Remember, the curve $\gamma(x)$ can be a hyperbola $H(x)$ with a unique branch intersecting $B^2,$ or a parabola $P(x).$ To satisfy our situation, the equations become, respectively 
$$H(x)=\frac{c}{d}\sqrt{d^2+x^2}-(1+c)   \quad \mbox{and} \quad P(x)=ax^2-1,$$
where $a, c, d >0,$ $H(1)=(c/d)\sqrt{d^2+1}-(1+c)>0,$ and $P(1)=a-1>0.$ Thus, $H(x) \cap B^2 \cap \mathcal{H}^+$ and $P(x) \cap B^2 \cap \mathcal{H}^+$ have two points, also $H(x)$ and $P(x)$ pass through $V=(0,-1).$ Suppose that $H(x)$ and $P(x)$ pass through the same points $A, D \in B^2 \cap \mathcal{H}^+,$ then $H(x) \cap P(x)=\{A, D, V\}.$ This is because of the symmetry, and the fact that the intersection $H(x) \cap P(x)$ has at most four points.  Note that $H(x)$ and $P(x)$ are tangent at $V,$ so $A$ (and then $D,$ by symmetry) cannot be a point where $H(x)$ and $P(x)$ has a common tangent, since two distinct conics are tangent at most at two points. As $P(x)>H(x)$ for $x \rightarrow \infty,$ we conclude that the graphic of $H(x)$ is above of $P(x)$ for $x(A)\leq x \leq x(D).$ In particular, $L(H(x)\cap B^2)<L(P(x)\cap B^2),$ so we only need to bound $L(P(x)\cap B^2)$ for $a>1 $ (see Fig. \ref{parabola.final} (b)). As $a>1,$ the points $A, D$ can be determined uniquely by the value of the parameter $a.$ In fact, $-x(A)=x(D)=x(a)=\sqrt{2a-1}/a,$ where $x(a)$ is the positive solution of $x^2+(ax^2-1)^2=1.$ Then, we can calculate $L(P(x) \cap B^2)$ in the parameter $a:$

\begin{eqnarray*}
	L(P(x) \cap B^2)&=&L(a) \\
	&=& 2 \int_{0}^{x(a)} \sqrt{1+4a^2 x^2} \ dx \\
	&=&\frac{\ln\left(\sqrt{4a^2 x(a)^2+1}+2ax(a)\right)+2ax(a)\sqrt{4a^2x(a)^2+1} }{2a} \\
	&=&\frac{\ln\left(\sqrt{8a-3}+2\sqrt{2a-1}\right)+2\sqrt{2a-1}\sqrt{8a-3}}{2a}.
\end{eqnarray*}

By the expression above we have that $L(P(x) \cap B^2) \rightarrow 4$ as $a \rightarrow \infty$ ($P(x) \cap B^2$ becomes two diameters). We see below that $L(a)$ has a global maximum point at $a_0<\infty,$ and then $L(a_0)>4.$ Indeed, 
\begin{eqnarray*}
L'(a)=\frac{8a-3-(1/2)\ln\left(\sqrt{8a-3}+2\sqrt{2a-1}\right)\sqrt{8a-3}\sqrt{2a-1}}{a^2 \sqrt{2a-1}\sqrt{8a-3}}.
\end{eqnarray*}
Taking $z=2a-1,$ the denominator above becomes
\begin{eqnarray*}
	 4z+1-\frac{1}{2}\ln\left(\sqrt{4z+1}+2\sqrt{z}\right)\sqrt{4z+1}\sqrt{z}.
\end{eqnarray*}
So, the sign of $L'(a)$ is the sign of 
\begin{eqnarray} \label{sign.derivative.lenght}
	 \displaystyle \frac{2\sqrt{4z+1}}{\sqrt{z}}-\ln\left(\sqrt{4z+1}+2\sqrt{z}\right).
\end{eqnarray}
Note that the expression above starts positive for $a>1$ and tends to $-\infty$ when $a\rightarrow \infty,$ moreover it is strictly decreasing for $a>1/2.$ The latter is because the derivative of the last expression is given by
\begin{eqnarray*}
	-\frac{z+z^2}{z^{5/2}\sqrt{4z+1}}.
\end{eqnarray*}  
Therefore, there exists a unique $a_0>1$ such that $L'(a_0)=0.$ Moreover,  
$L(a)$ is strictly increasing for $1<a<a_0,$ and it is strictly decreasing for $a>a_0.$ In particular, $L(a_0)>4$ and $L(a_0)$ is the global maximum of $L(a),$ since $L(a) \rightarrow 4$ as $a \rightarrow \infty.$ We can estimate $a_0$ such that (\ref{sign.derivative.lenght}) becomes zero, and we obtain $a_0 \approx 94.091282,$ and then $L_1<L_0=L(a_0)\approx 4.00267.$  \hfill $\Box$

\vspace{0.4cm}

\bibliographystyle{plain}
\bibliography{sample}

\vspace{0.8cm}

\end{document}